\documentclass[11pt]{amsart}

\usepackage{evearticle}
\DeclareMathOperator\Mod{Mod}

\begin{document}

\title{Quasisymmetric rigidity of the Brownian sphere}
\author{Jason Miller and Yi Tian}

\date{\today}

\begin{abstract}
    The Brownian sphere, also known as the Brownian map, is a canonical random metric measure space homeomorphic to the two-dimensional sphere $\BS^2$. It can be interpreted as the uniform measure on surfaces homeomorphic to $\BS^2$, in the sense that it arises as the scaling limit of many natural models of random planar maps chosen uniformly from a given class. It is also equivalent to the $\sqrt{8/3}$-Liouville quantum gravity sphere. We prove that the Brownian sphere is quasisymmetrically rigid, meaning that, almost surely, it has no nontrivial quasisymmetric automorphisms. We also show that two independent Brownian spheres are almost surely not quasisymmetrically equivalent. Our argument also gives a new proof that the conformal structure of the Brownian sphere is almost surely determined by its metric structure.
\end{abstract}

\maketitle

\tableofcontents

\setlength{\parindent}{0pt}
\setlength{\parskip}{0.5\baselineskip plus 1pt minus 1pt}

\section{Introduction}

In the present paper, we study the quasisymmetric geometry of the Brownian sphere, a random metric measure space which is almost surely homeomorphic to the two-dimensional sphere $\BS^2$ but has Hausdorff dimension $4$. More precisely, we prove rigidity results for the Brownian sphere under quasisymmetric mappings. We first recall some background and fix terminology.

The \emph{Brownian sphere}, also called the \emph{Brownian map}, can be viewed heuristically as a sample from the ``uniform measure'' on the space of all Riemannian metrics on $\BS^2$. Such a measure does not have an obvious meaning, since the space of all Riemannian metrics on $\BS^2$ is infinite-dimensional. The Brownian sphere can also be described as the $\gamma$-\emph{Liouville quantum gravity (LQG)} sphere with $\gamma = \sqrt{8/3}$ (or, equivalently, with $\Bc = 0$). LQG surfaces were first introduced non-rigorously in the physics literature by Polyakov \cite{Polyakov:1981rd} in the context of bosonic string theory, as a string-theoretic analogue of the Feynman path integral.

Recall that a \emph{planar map} is a proper embedding of a connected planar graph into $\BS^2$, considered up to orientation-preserving homeomorphisms. Random planar maps may be viewed as discrete models of two-dimensional quantum gravity. It was shown independently by Le Gall \cite{MR3112934} and Miermont \cite{MR3070569} that the Brownian sphere is the Gromov--Hausdorff--Prokhorov scaling limit of quadrangulations, i.e., planar maps whose faces all have degree four, chosen uniformly at random as the number of faces tends to $\infty$. Subsequent works showed that many other natural families of random planar maps also converge to the Brownian sphere; see \cite{MR3256874,MR3498001,MR3706731,MR4315765}. The Brownian sphere is also the scaling limit of random hyperbolic surfaces with genus zero and many cusps \cite{2025arXiv250818792B}. The equivalence between the Brownian sphere and the $\sqrt{8/3}$-LQG sphere was established in a series of works \cite{MR3572845,MR4050102,MR4225028,MR4348679,MR4242633}.

\emph{Quasiconformal mappings} are generalizations of conformal mappings. There are several equivalent definitions of quasiconformal mappings, including analytic, metric, and geometric definitions; see \Cref{subsection:background-quasiconformal}. Quasiconformal mappings have important applications in many areas, such as Teichm\"uller theory \cite{MR2241787,MR1730906,MR1215481}, complex dynamics \cite{MR819553,MR806415,MR1230383,MR3445628}, and geometric group theory and analysis on metric spaces \cite{MR919829,MR2116315,MR1654771,MR1800917,MR1919393}. After imposing a suitable normalization, the measurable Riemann mapping theorem identifies quasiconformal homeomorphisms with Beltrami coefficients $\mu \in L^\infty$ with $\lVert\mu\rVert_\infty < 1$.

\emph{Quasisymmetric mappings} are a metric-space variant of quasiconformal mappings, and are sometimes better suited to general metric spaces \cite{MR595180}. Suppose that $\eta \colon \BR_{\ge 0} \to \BR_{\ge 0}$ is a homeomorphism.  Then a homeomorphism $f \colon (X, D_X) \to (Y, D_Y)$ between two metric spaces is \emph{$\eta$-quasisymmetric} if it satisfies
\begin{equation*}
    \frac{D_Y(f(x), f(y))}{D_Y(f(x), f(z))} \le \eta\!\left(\frac{D_X(x, y)}{D_X(x, z)}\right)
\end{equation*}
for all distinct $x, y, z \in X$.   In this case, $\eta$ is called the \emph{distortion function}.  The class of quasisymmetric mappings is closed under both inversion and composition. Thus the existence of a quasisymmetric homeomorphism defines an equivalence relation on metric spaces, and we say that two metric spaces are \emph{quasisymmetrically equivalent} if such a homeomorphism exists between them. An important theme in hyperbolic geometry and geometric group theory is that quasi-isometries between hyperbolic manifolds or groups induce quasisymmetric mappings between their boundaries at infinity. This principle underlies rigidity results such as Mostow's rigidity theorem \cite{MR236383} and the quasiconformal rigidity of Kleinian groups \cite{MR806415,MR1726737}.

The classical uniformization theorem states that every simply connected two-dimensional Riemannian manifold is conformally equivalent to the upper half-plane, the complex plane, or the Riemann sphere. The quasisymmetric uniformization problem asks for an analogous classification of more general metric spaces up to quasisymmetric equivalence. The theorem of Bonk--Kleiner \cite{MR1930885} gives sufficient conditions for a metric space to be quasisymmetrically equivalent to $\BS^2$; see also \cite{MR3608292,MR4073230,MR4956983,MR4608329}. An analogous result for round carpets was proved in \cite{MR2854086}.

It was shown in \cite{MR4242630} that the Brownian sphere is almost surely not quasisymmetrically equivalent to $\BS^2$. An important invariant for classifying metric spaces up to quasisymmetric equivalence is the \emph{conformal dimension} \cite{MR1024425}. By definition, the conformal dimension of a metric space is the infimum of the Hausdorff dimensions of all metric spaces quasisymmetrically equivalent to it. It was proved in \cite{2026arXiv260324473M} that the conformal dimension of the Brownian sphere is almost surely equal to two. The next natural question is whether all Brownian sphere instances belong to the same quasisymmetric equivalence class, or if the class itself varies with the realization (i.e., is random). Our main results answer this question.

\begin{theorem}\label{thm:main-ind}
    Almost surely, there is no quasisymmetric homeomorphism between two independent Brownian spheres.
\end{theorem}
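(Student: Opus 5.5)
A quasisymmetric homeomorphism $f$ between two independent Brownian spheres $(\mathcal S_1,D_1)$ and $(\mathcal S_2,D_2)$ should force the conformal structures to agree. Because the conformal structure of a Brownian sphere is a measurable function of its metric (which the paper reproves), that agreement would then produce a contradiction with independence. The intermediate goal is this: almost surely, any quasisymmetric map $f\colon \mathcal S_1\to\mathcal S_2$ is conformal when $\mathcal S_i$ are viewed via their $\sqrt{8/3}$-LQG embeddings $\phi_i\colon \mathcal S_i\to \BS^2$. In other words, $\phi_2\circ f\circ\phi_1^{-1}$ is a M\"obius transformation.

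\textbf{Step 1: quasisymmetries become quasiconformal maps.} Under the LQG embedding, Brownian sphere metric balls are comparable to Euclidean ones only in a random, scale-dependent way. I would show that $F=\phi_2\circ f\circ\phi_1^{-1}$ is a homeomorphism of $\BS^2$ that is quasiconformal in an averaged or geometric sense. The route is to compare moduli of curve families. Quasisymmetries distort annuli boundedly, so $F$ maps Euclidean annuli of conformal modulus $\ge M$ to annuli whose LQG-metric shape is controlled. I would use LQG estimates relating Euclidean and metric annuli, since a Euclidean annulus of large modulus contains many disjoint metric annuli with good ratio. These estimates should show that $F$ has bounded distortion at most points and most scales. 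Multiscale ``good annuli'' arguments of the kind used for the conformal dimension result should then give that $F$ is quasiconformal, with metric definition $\limsup_{r\to0}H_F(z,r)$ finite off a set that is harmless for the geometric definition.

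\textbf{Step 2: rigidity, $F$ is M\"obius.} Next I would show the Beltrami coefficient of $F$ vanishes. The idea is that at typical points the Brownian sphere has, at infinitely many scales, rotationally symmetric local behavior, i.e. a tangent cone distribution that is invariant under rotation. A nontrivial dilatation would prefer a direction at a positive-measure set of points. Independence across scales of the GFF or Brownian snake should make that impossible, since infinitely many independent scales would each detect the stretching. This also yields the statement that the metric determines the conformal structure, by applying it with $\mathcal S_2$ a reembedding of $\mathcal S_1$. I expect this step to be the main obstacle. It requires controlling the arbitrary and adversarially chosen map $f$ uniformly, so I would discretize $f$ at scale $\epsilon$ and use a union bound over a net of possible ``shapes''. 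For that, the relevant probability estimates need superpolynomial decay in $\epsilon$.

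\textbf{Step 3: conclusion.} Given Steps 1 and 2, $F$ is M\"obius. So $\mathcal S_2$, as an LQG surface, equals the pushforward of $\mathcal S_1$ by a M\"obius map. Both quantum surfaces are then equivalent, i.e. the same unmarked quantum sphere modulo conformal automorphisms. By the measurability of the metric in terms of the field, and conversely, the two metric measure spaces would be isometric up to scaling of the metric. Independent samples of a non-atomic law on isometry classes coincide with probability zero, which I would check using a continuous invariant such as the total mass or the diameter distribution. This contradiction proves \Cref{thm:main-ind}.
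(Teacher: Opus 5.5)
\textbf{The gap is in Step 3, and it undermines the whole strategy.} From ``$F=\phi_2\circ f\circ\phi_1^{-1}$ is M\"obius'' you infer that the two quantum surfaces coincide, and hence that the metric spaces are isometric. But $f$ is only quasisymmetric. A conformal map can be quasisymmetric between two $\sqrt{8/3}$-LQG metrics whose fields are not related by the coordinate change rule. The paper gives such a coupling in the introduction: resample a single coefficient in the series expansion of the field. This yields two non-isometric Brownian spheres for which the identity map of the parameterizing domain, which is conformal, is quasisymmetric. So conformality of $F$ together with non-atomicity of the law of isometry classes gives no contradiction. You would still have to show that, for independent fields, even a M\"obius $F$ cannot be quasisymmetric, and that requires using the independence of $\Phi_1$ and $\Phi_2$ at small scales directly.

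Step 2 is also not substantiated. A bounded directional bias in the law of distances (the mechanism of \Cref{section:measurability}) contradicts an isometry. A quasisymmetry, however, absorbs any bounded multiplicative discrepancy through $\eta$ at each scale, so ``every independent scale detects the stretching'' does not follow. The paper neither proves nor needs that quasisymmetries between coupled LQG metrics are conformal. A smaller point on Step 1: good behavior at most scales yields a liminf bound, not a limsup bound. The paper uses the liminf form (III) of the metric definition, and verifies it at every point (via a net of sampled points and a union bound), not merely at most points.

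\textbf{The missing idea bypasses conformality altogether.} Once $f$ is known to be quasiconformal (\Cref{lem:quasiconformality-GFF-GFF}), sample $x$ from Lebesgue measure on $\BD$ independently of everything. Then $Df(x)$ exists and is invertible, and since $\mathop{\mathrm{GL}}(2,\BR)$ is second countable it suffices to rule out $\lVert Df(x)-A\rVert\le\delta$ for each fixed $A$, conformal or not. The argument then runs as follows.
\begin{enumerate}
\item Conditionally on $\Phi_1$ and $x$, the $D_{\Phi_1}$-distance ratios among test points $A^{-1}(y)+x$ determine fixed admissible intervals $[1/\eta(1/t),\eta(t)]$ at every scale.
\item Near-linearity of $f$ at $x$ forces the independent field $\Phi_2$ to realize ratios inside these prescribed intervals, in small balls around the corresponding points near $f(x)$, at all small scales.
\item \Cref{lem:E-properties} (via Weyl scaling and absolute continuity) shows that any prescribed admissible interval is violated with probability bounded below.
\item \Cref{lem:F-properties} boosts this to probability close to one, using many well-separated balls in an annulus.
\item \Cref{lem:independence-across-scales}, with a union bound over a grid of candidate centers, shows that the required matching fails at some small scale around every possible image point.
\end{enumerate}
The conclusion is transferred to Brownian spheres by local absolute continuity (\Cref{lem:three-pointed-sphere}) and bi-Lipschitz comparison of internal metrics (\Cref{lem:bi-Lipschitz}). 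No M\"obius rigidity and no comparison of isometry classes is needed.
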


\begin{theorem}\label{thm:main-aut}
    Almost surely, a Brownian sphere has no nontrivial quasisymmetric automorphisms.
\end{theorem}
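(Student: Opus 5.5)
The plan is to reduce \Cref{thm:main-aut} to a local rigidity statement. First, show that a quasisymmetric self-map of the Brownian sphere preserves a local "shape" observable. Then use the independence and ergodicity of the Brownian sphere at small scales to force such a map to be the identity.

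\textbf{Step 1: conformality.} Fix a realization $(\CS, D, \mu)$ viewed as a $\sqrt{8/3}$-LQG sphere, with embedding $h$ on $\BS^2$. Let $f\colon \CS\to\CS$ be an $\eta$-quasisymmetric homeomorphism. Transporting through the embedding, $f$ induces a homeomorphism $\phi$ of $\BS^2$. I would show that $\phi$ is quasiconformal and in fact conformal, hence M\"obius. The argument runs through the modulus of path families. Quasisymmetry of $f$ gives uniform comparability of $D$-annuli ratios. The key input is that annuli in the Brownian-sphere metric can be compared with Euclidean annuli in the embedding, via the metric determining the conformal structure; the abstract says our argument gives a new proof of that fact. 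I would aim for a modulus-type invariant $\Mod$ defined intrinsically from $D$, for instance through the combinatorial modulus of covers by $D$-balls at scale $\epsilon\to0$, normalized suitably. This invariant should converge almost surely to the Euclidean conformal modulus of the embedded quadrilateral or annulus. Since it is defined from ratios of distances, a quasisymmetry distorts it by at most a bounded factor $C(\eta)$. That bound already gives that $\phi$ is quasiconformal.

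\textbf{Step 2: upgrade to conformality.} The upgrade uses the fact that, at typical points and small scales, the Brownian sphere looks like an independent Brownian plane. Suppose the Beltrami coefficient of $\phi$ were nonzero on a set of positive area. Zoom in at a Lebesgue density point $z$ where $\phi$ is differentiable with nonconformal derivative $A$. Around $z$, $f$ would then map the metric near $z$ to the metric near $\phi(z)$, up to the linear map $A$ and bounded quasisymmetric distortion. The field $h$ near $z$ is, after rescaling, asymptotically independent of the field near $\phi(z)$ when $\phi(z)\ne z$, and is rotation invariant. Using many independent scales, a zero--one and ergodic argument shows that a fixed nonconformal $A$ cannot consistently relate the two metrics across infinitely many scales. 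Concretely, the quasisymmetric distortion bound $\eta$ is fixed while the mismatch accumulates, since ratios of intrinsic moduli in different directions fluctuate independently at each scale. The same reasoning would prove \Cref{thm:main-ind} first, with two independent fields in place of $h$ near $z$ and near $\phi(z)$.

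\textbf{Step 3: M\"obius case.} Once $\phi$ is M\"obius and not the identity, it has at most two fixed points. Hence almost every point $z$ satisfies $\phi(z)\ne z$, and $\phi$ pushes $h$ to $h\circ\phi^{-1}+Q\log|(\phi^{-1})'|$. Quasisymmetry plus conformality then force the metric near $z$ to be bi-Lipschitz, up to a constant scale factor, to the metric near $\phi(z)$ at all small scales. Comparing the local field averages $h_\epsilon(z)-h_\epsilon(\phi(z))$ gives a contradiction: these are asymptotically Brownian motions in $\log(1/\epsilon)$ and are unbounded, but the bi-Lipschitz condition would keep their difference bounded. To make this rigorous one must handle the fact that $\phi$ is random and depends on $h$. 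I would do this by taking a countable dense family of M\"obius maps and using continuity and compactness for bounded-distortion maps.

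\textbf{Main obstacle.} The hardest step is Step 2, and specifically building an intrinsic, quasisymmetry-robust observable that converges to the conformal modulus. Controlling the dependence of $\phi$ on $h$ will also require care.
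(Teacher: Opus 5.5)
\textbf{Step 3 does not work as stated.} Quasisymmetry controls only \emph{ratios} of distances at comparable scales. It does not force the metric near $z$ to be bi-Lipschitz, up to a constant factor, to the metric near $\phi(z)$. The local stretch factor $\operatorname{diam}(f(B_r(z)))/r$ of an $\eta$-quasisymmetric map can drift with $r$ at a polynomial rate. The drift you would detect is of order $\exp(\xi(h_\epsilon(\phi(z))-h_\epsilon(z)))$, which by the law of the iterated logarithm is $\exp(O(\sqrt{\log(1/\epsilon)\log\log\log(1/\epsilon)}))=\epsilon^{o(1)}$. That is fully compatible with quasisymmetry, so unbounded circle-average differences give no contradiction. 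Any obstruction has to be scale invariant.

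\textbf{Step 2 has the same defect in vaguer form.} ``The mismatch accumulates'' does not name an observable that an $\eta$-quasisymmetry must respect and that the target field violates with probability bounded below \emph{uniformly over every constraint the source could impose}. Also, ``the field near $z$ is asymptotically independent of the field near $\phi(z)$'' cannot be applied directly, because $z$, $\phi(z)$ and $D\phi(z)$ are all functions of $h$.

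\textbf{Step 1 rests on a statement that is not available.} You need an intrinsic combinatorial modulus that converges a.s.\ to the Euclidean modulus and is quasi-invariant under quasisymmetries of a non-doubling space. The paper avoids this by using the conformal moduli, in the embedding, of metric bands. Large-ratio bands contain Euclidean circles by \Cref{lem:Teichmueller}, and image bands have moduli bounded above and below at many scales. The $\liminf$ criterion (III) then gives $K(\eta)$-quasiconformality (\Cref{lem:quasiconformality}).

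\textbf{The missing idea is that independence makes the conformal/M\"obius split unnecessary.} \Cref{lem:nonexistence} shows there is \emph{no} quasisymmetric embedding at all between independent whole-plane GFF metrics. The argument runs as follows.
\begin{itemize}
\item Sample $\xx$ uniformly and independently, and fix a candidate derivative $A$ from a countable cover of $\mathop{\mathrm{GL}}(2,\BR)$.
\item Then $\Phi_1$ and $\xx$ determine, at each scale, admissible intervals $[1/\eta(1/t),\eta(t)]$ that certain $D_{\Phi_2}$-distance ratios of image triples must lie in.
\item The events of \Cref{lem:E-properties} say that $D_{\Phi_2}(y^0,y^1)/D_{\Phi_2}(y^0,y^2)$ over three small balls avoids $I$. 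They have probability at least $p(\eta)$ uniformly in $I$, by Weyl scaling with bump functions plus absolute continuity.
\item Using many nearly independent balls around an annulus boosts this probability to nearly $1$.
\item Independence across scales, together with a union bound over an $\varepsilon^3$-grid, shows that around \emph{every} candidate image point there are arbitrarily small scales where the pattern fails.
\end{itemize}
This rules out every $A$, conformal or not.

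For automorphisms, suppose $f\neq\mathrm{id}$. Then $f$ maps some deterministic dyadic $U_1\Subset V_1$ into some $U_2\Subset V_2$ with $V_1\cap V_2=\emptyset$. By the domain Markov property, $\Phi=\Phi_{V_i}+\kH_{V_i}$ with $\Phi_{V_1},\Phi_{V_2}$ independent and $D_{\Phi_{V_i}}$ bi-Lipschitz to $D_\Phi$ on $U_i$. So the independent case applies. Taking a countable union over dyadic pairs is what handles the dependence of $f$ on the field, and local absolute continuity transfers the result to the sphere.
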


We note that \Cref{thm:main-aut} implies that there can be at most one quasisymmetric homeomorphism between any given pair of Brownian sphere instances. Indeed, the composition of two such homeomorphisms would give a quasisymmetric automorphism of one of the Brownian spheres. If the two Brownian spheres are independent, then \Cref{thm:main-ind} implies that there is no quasisymmetric homeomorphism between them. However, there are couplings of pairs of Brownian sphere instances which are not isometric and for which there exists a nontrivial quasisymmetric homeomorphism between them. For example, if we realize the Brownian sphere using the GFF-like distribution associated with a $\sqrt{8/3}$-LQG sphere, then one can resample a single coefficient in the series expansion of the field to obtain another $\sqrt{8/3}$-LQG sphere. In this coupling, the identity map on the underlying parameterizing domain is quasisymmetric.

In the context of the quasisymmetric uniformization problem, \Cref{thm:main-ind} implies that the Brownian sphere measure on the space of metric spaces homeomorphic to $\BS^2$ assigns zero mass to each quasisymmetric equivalence class; equivalently, the induced measure on the quotient by quasisymmetric equivalence is non-atomic. In particular, there does not exist a single deterministic model space to which every Brownian sphere instance is quasisymmetrically equivalent. This contrasts with the class of Ahlfors regular and linearly locally connected surfaces considered in \cite{MR1930885}, which are shown there to be quasisymmetrically equivalent to~$\BS^2$.

Quasisymmetric rigidity has also been studied for deterministic fractals, including round carpets \cite{MR2503988} and square Sierpi\'nski carpets \cite{MR3010807,MR4140760}. In \cite{MR3010807,MR4140760}, it was proved that:
\begin{itemize}
    \item If $p, q \ge 3$ are odd integers, then the Sierpi\'nski carpets $S_p$ and $S_q$ are quasisymmetrically equivalent if and only if $p = q$. 
    \item Every quasisymmetric automorphism $S_p \to S_p$ is an isometry.
\end{itemize}
In this sense, \Cref{thm:main-ind} is reminiscent of the first statement, while \Cref{thm:main-aut} is reminiscent of the second.

The quasisymmetric uniformization problem also has important connections with geometric group theory. For example, quasi-isometries between Gromov hyperbolic groups induce quasisymmetric homeomorphisms between their Gromov boundaries equipped with visual metrics \cite{MR1771428}. The Gromov boundary of a Gromov hyperbolic group is a crucial invariant for studying the properties of the group. Cannon's conjecture \cite{MR1301392} asserts that if the Gromov boundary of a Gromov hyperbolic group is homeomorphic to $\BS^2$, then it is quasisymmetrically equivalent to $\BS^2$. The Kapovich--Kleiner conjecture \cite{MR1834498} asserts that if the Gromov boundary of a Gromov hyperbolic group is homeomorphic to the standard Sierpi\'nski carpet, then it is quasisymmetrically equivalent to a round carpet.

Since a non-elementary (i.e., not virtually cyclic) Gromov hyperbolic group acts nontrivially on its Gromov boundary by quasisymmetric homeomorphisms with respect to any visual metric (cf., e.g., \cite[Proposition~1.18]{MR4273635}), \Cref{thm:main-aut} implies the following.

\begin{corollary}\label{cor:group}
    Almost surely, the Brownian sphere is not quasisymmetrically equivalent to the Gromov boundary of any Gromov hyperbolic group equipped with a visual metric.
\end{corollary}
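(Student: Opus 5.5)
The plan is to argue by contradiction, combining \Cref{thm:main-aut} with the standard fact quoted just before the statement: a non-elementary Gromov hyperbolic group acts on its Gromov boundary by quasisymmetric homeomorphisms with respect to any visual metric.

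Fix an outcome in the almost sure event of \Cref{thm:main-aut}, so that the Brownian sphere $(X,D)$ has no nontrivial quasisymmetric automorphisms. Suppose there were a Gromov hyperbolic group $G$, a visual metric $\rho$ on $\partial G$, and a quasisymmetric homeomorphism $\phi \colon (X,D) \to (\partial G,\rho)$.

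First I dispose of elementary groups. Since $X$ is homeomorphic to $\BS^2$, so is $\partial G$. The boundary of a finite group is empty, and the boundary of a virtually cyclic group consists of two points. Neither is homeomorphic to $\BS^2$, so $G$ is non-elementary.

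Next I produce a nontrivial automorphism. By \cite[Proposition~1.18]{MR4273635}, each $g \in G$ acts on $(\partial G,\rho)$ by a quasisymmetric homeomorphism $\sigma_g$. I need some $g$ with $\sigma_g \neq \mathrm{id}$. A non-elementary hyperbolic group contains a loxodromic element $g$. Such a $g$ acts on $\partial G$ with exactly two fixed points $g^{\pm\infty}$ and north--south dynamics. Since $\partial G$ is infinite, $\sigma_g$ is not the identity.

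Now set $\psi = \phi^{-1} \circ \sigma_g \circ \phi$. Quasisymmetric maps are closed under composition and inversion, so $\psi$ is a quasisymmetric automorphism of $(X,D)$. It is nontrivial because $\sigma_g$ is. This contradicts the choice of outcome, and the corollary follows on that almost sure event.

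There is no serious obstacle here. The only points to check are that the hyperbolic group is non-elementary and that its action on the boundary is nontrivial, and both follow from standard facts about hyperbolic groups.

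Note also that the event does not depend on $G$: the argument holds simultaneously for all hyperbolic groups and all visual metrics, so no union over uncountably many objects is needed.
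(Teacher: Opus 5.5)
Your proposal is correct and follows the same route as the paper, which deduces the corollary directly from \Cref{thm:main-aut} together with the fact that a non-elementary hyperbolic group acts nontrivially on its boundary by quasisymmetric homeomorphisms for any visual metric. You spell out the details the paper leaves implicit: elementary groups are ruled out because their boundaries are not homeomorphic to the $2$-sphere, a loxodromic element gives a non-identity boundary map, and conjugating by the quasisymmetric equivalence yields a nontrivial quasisymmetric automorphism.
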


\Cref{cor:group} is not new. It can also be derived from the fact that the Brownian sphere is almost surely not doubling, proved in \cite{MR4242630}. More broadly, it is natural to ask whether there is any group-theoretic construction which gives a boundary related, in a suitable sense, to the Brownian sphere. We note, however, that our main results give a way to rule this out in cases where the group acts on its boundary by quasisymmetric homeomorphisms. For example, this applies to some Bowditch boundaries of relatively hyperbolic groups equipped with suitable visual-type metrics. Such boundaries are not doubling in general, so this conclusion does not follow directly from \cite{MR4242630}.

An important ingredient in the proof of \Cref{thm:main-ind,thm:main-aut} is the fact that the Brownian sphere carries a canonical conformal structure. It was proved in \cite{MR4242633} that this conformal structure is almost surely determined by the metric. Moreover, by \cite{TuttePVBDtoLQG}, it can be recovered explicitly from the metric as the limit of the Tutte embeddings of the Poisson--Voronoi tessellations associated with the Brownian sphere. To the best of our knowledge, \cite{MR4242633,TuttePVBDtoLQG} provide the only previously known proofs of this result, with the former relying on conformal removability. By adapting the argument used to prove \Cref{thm:main-ind,thm:main-aut}, we obtain a new proof.

\begin{corollary}\label{cor:measurability}
    The metric structure of the Brownian sphere almost surely determines its conformal structure up to complex conjugation.
\end{corollary}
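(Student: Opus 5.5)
Corollary~\ref{cor:measurability} will follow from adapting the argument for Theorems~\ref{thm:main-ind} and~\ref{thm:main-aut}. The core of that argument will be a quantitative statement. Let $(\mathcal S, D, \mu)$ be a Brownian sphere with its conformal embedding $\phi\colon \BS^2 \to \mathcal S$, i.e.\ the $\sqrt{8/3}$-LQG sphere $(\BS^2, h)$. Any quasisymmetric map between Brownian spheres becomes, after pulling back through the embeddings, a homeomorphism of $\BS^2$. We will show this homeomorphism is quasiconformal (even conformal) for the Euclidean structure, using that LQG metric balls are comparable to Euclidean-shape sets up to controlled distortion at most scales. The plan for Corollary~\ref{cor:measurability} is then the following.

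Take two embeddings $\phi_1, \phi_2\colon \BS^2\to\mathcal S$ of the same instance, and assume each realizes the instance as an LQG sphere. We will only use that both are embeddings of the \emph{same} metric space. This is exactly the situation of Theorem~\ref{thm:main-aut} with the identity map $\mathrm{id}\colon(\mathcal S,D)\to(\mathcal S,D)$, which is trivially quasisymmetric.

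\textbf{Step 1.} Show that $\psi := \phi_2^{-1}\circ\phi_1$ is quasiconformal on $\BS^2$. We use the metric definition of quasiconformality from \Cref{subsection:background-quasiconformal}: bound $\limsup_{r\to0} \sup_{|y-x|=r}|\psi(y)-\psi(x)| / \inf_{|y-x|=r}|\psi(y)-\psi(x)|$ for all but a negligible set of $x$. To do this, we will use the fact from the main argument that, at typical scales, small LQG metric balls have Euclidean images which are roughly round, with uniformly bounded eccentricity. Since metric balls are intrinsic to $(\mathcal S,D)$, both embeddings send them to roughly round sets. This gives a bounded eccentricity estimate for $\psi$ at a positive density of scales around Lebesgue-a.e.\ point, which suffices by the Heinonen--Koskela type criterion.

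\textbf{Step 2.} Upgrade to conformality by an ergodicity/zero-one argument. Suppose $\psi$ had a nonzero Beltrami coefficient. Then by the locality of the construction, the Beltrami coefficient $\mu_\psi$ at a $\mu$-typical point would be a measurable function of the metric near that point which is also of the field. Scale invariance of the LQG field near a typical point (zooming in gives an independent whole-plane field) forces the Beltrami coefficient to have a constant "tangent" distribution. Independence across disjoint regions then forces $\mu_\psi = 0$ a.e. Measurability of the conformal structure would then say $\phi_1$ and $\phi_2$ differ by a M\"obius map or an anti-M\"obius map. The conjugation ambiguity comes from $\psi$ possibly being orientation-reversing.

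\textbf{Main obstacle.} Step 2 is the main obstacle: converting "quasiconformal" into "conformal". This is precisely the step that the proof of Theorem~\ref{thm:main-aut} must already handle, since a nontrivial quasisymmetric automorphism corresponds to a nonconformal $\psi$. I would reuse its mechanism directly. That mechanism is the fact that the quasiconformal map is absolutely continuous and differentiable a.e. At a differentiability point, zooming in shows the blow-up of $\mathcal S$ is a Brownian plane. Its image under a nonconformal linear map would have to be a metrically equivalent Brownian plane with a tilted conformal structure. This contradicts the rotational and ellipse-distortion statistics of the field, which can be read off from e.g.\ the distribution of shapes of filled metric balls.
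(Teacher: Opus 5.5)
Your two-step structure (quasiconformality first, then a.e. differentiability plus an isotropy obstruction at a point where $D\psi\approx A$) matches the paper's, but Step 2 as proposed has a genuine gap.

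\textbf{Why Step 2 fails.} The mechanism you want to reuse from \Cref{thm:main-aut} is not a quasiconformal-to-conformal upgrade. It is a nonexistence argument that rests on \emph{independence}. In \Cref{lem:matrix} one conditions on $\Phi_1$ and the sampled point, so the admissible intervals $I^j_r$ become fixed. One then uses that $\Phi_2$ is independent (for \Cref{thm:main-aut}, that the zero-boundary field on a disjoint domain is independent), so the obstruction events $F_{z,r}$ have conditional probability near $1$.

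For the corollary, $\psi=\phi_2^{-1}\circ\phi_1$ maps $(\widehat\BC,D_{h_1})$ to $(\widehat\BC,D_{h_2})$, and the two fields are coupled in an unknown way. That dependence is exactly what is at stake. So this is neither the situation of \Cref{thm:main-ind} nor that of \Cref{thm:main-aut}, and the fact that the identity of $\mathcal S$ is trivially quasisymmetric gives nothing. The paper's resampling example makes this concrete: there the identity is quasisymmetric with $D\psi\equiv I$, so the conclusions of \Cref{lem:matrix} and \Cref{lem:nonexistence} fail for dependent couplings.

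Your zero-one alternative has three problems:
\begin{enumerate}
\item It does not say what a nonzero Beltrami coefficient would contradict. A constant nonzero coefficient is scale- and translation-invariant and independent of everything.
\item It evaluates $\mu_\psi$ at LQG-area-typical points. These form a Lebesgue-null set where $\psi$ need not be differentiable. At Lebesgue-typical points the tangent is a whole-plane GFF metric, not a Brownian plane.
\item ``Zooming in at $\psi(x)$'' is unjustified, because $\psi(x)$ depends on $h_2$. You need estimates that hold simultaneously at all points of a fine grid.
\end{enumerate}

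\textbf{The missing idea.} Exploit the isometry through an explicit anisotropy test, and get the contradiction by counting rather than by independence. Suppose $D\psi(x)\approx A$ with $\det A>0$ and $A$ not a scaled rotation. Let $u,v$ be endpoints of the semi-major and semi-minor axes of $A(\partial\BD)$, so $\|u\|>\|v\|$, and set $p=A^{-1}u$, $q=A^{-1}v$, which are orthogonal unit vectors.
\begin{enumerate}
\item In the domain: a reflection swaps $p$ and $q$, so by reflection symmetry of $h_1$ the $D_{h_1}$-distance across a small segment in direction $p$ is at most that across the congruent segment in direction $q$ with probability at least $1/2$.
\item In the target: by LQG scale covariance (exponent $\xi Q=5/6$) and an independent Gaussian coarse-field factor, the $D_{h_2}$-distance between small balls at the ends of the longer segment along $u$ exceeds that along $v$ with some probability $p_\ast>1/2$ (\Cref{lem:measurability-pattern}).
\item Place many nearly independent copies of these tests on an annulus at each scale. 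Using \Cref{lem:independence-across-scales} with a union bound over a grid of centers, find near $x$ and $\psi(x)$ a common scale where at least a $(1/2-\zeta)$-fraction of copies satisfy the first inequality and at least a $(p_\ast-\zeta)$-fraction satisfy the second.
\item Since $1/2+p_\ast>1$, some copy satisfies both. Linearization puts the images of the $p$- and $q$-points into the $u$- and $v$-balls, contradicting $D_{h_2}(\psi(\cdot),\psi(\cdot))=D_{h_1}(\cdot,\cdot)$.
\end{enumerate}
Each event involves only one field, so the coupling never matters.

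\textbf{Smaller issues in Step 1.}
\begin{enumerate}
\item The metric criterion needs the $\liminf$ bound at \emph{every} point, not Lebesgue-a.e.; the paper gets this via a net and a union bound.
\item A ``positive density'' of good scales does not guarantee a scale that is good for both embeddings. You need densities summing to more than one.
\item Independence of conformal moduli across metric bands must come from the QLE$(8/3,0)$ Markov property, or from Euclidean-annulus events for each GFF separately. Taking it from the Brownian-plane Markov property presupposes the corollary.
\end{enumerate}
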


The proof of \Cref{thm:main-ind,thm:main-aut} has two main steps. In \Cref{section:quasiconformality}, we first show that, almost surely, every quasisymmetric homeomorphism between two coupled (not necessarily independent) Brownian spheres is quasiconformal with respect to their conformal structures. Then, in \Cref{section:main-proof}, we prove that, when the two Brownian spheres are independent, such a quasisymmetric homeomorphism cannot exist. This strategy is motivated by the study of quasisymmetric rigidity for deterministic fractals in Euclidean space, such as round carpets \cite{MR2503988} and square Sierpi\'nski carpets \cite{MR3010807}, where one often uses the fact that a quasisymmetric mapping extends to a quasiconformal mapping of the ambient space. Finally, in \Cref{section:measurability}, we explain how to adapt the arguments of \Cref{section:quasiconformality,section:main-proof} to prove \Cref{cor:measurability}. Indeed, the results of \Cref{section:quasiconformality} imply that the identity map between the Brownian sphere equipped with any two possible conformal structures is quasiconformal. Then, by modifying the argument of \Cref{section:main-proof}, we show that this identity map must in fact be conformal.

We emphasize that one can also define quasiconformality using the metrics of the Brownian spheres.  \emph{Throughout the remainder of the present paper, quasiconformality will always mean quasiconformality with respect to the conformal structures of the Brownian spheres.}

We will find it convenient in this paper to first work in the setting of the Brownian plane (an unbounded variant of the Brownian sphere which may be obtained as a tangent cone of the Brownian sphere \cite{TBP}) and the whole-plane GFF, and to prove rigidity in this setting. These objects are often more convenient to study because they have scale and translation invariance properties, as well as independence properties across scales; see \Cref{subsection:independence}. We then transfer the results to the Brownian sphere at the end of the proof using local absolute continuity.

Another important ingredient in the second step of the proof is the fact that a quasiconformal mapping is differentiable, with invertible Jacobian matrix, at Lebesgue almost every point. This is a deep fact which follows from the equivalence of the different definitions of quasiconformality; see \Cref{subsection:background-quasiconformal}. In particular, at a Lebesgue typical point, the mapping is locally close to a linear mapping. This will allow us to derive a contradiction.

We also note that the proof strategy is reminiscent of the proof of Mostow's rigidity theorem (see, e.g., \cite{MR1168043} for more details). There, one first shows that an isomorphism of the fundamental groups induces an equivariant quasi-isometry between the universal covers of the two hyperbolic manifolds, and also an equivariant quasiconformal mapping between the spheres at infinity. Then, using analytic results on quasiconformal mappings, one deduces that the quasi-isometry between the universal covers must in fact be an isometry.

\subsection*{Acknowledgements}

J.M.~received support from ERC consolidator grant ARPF (Horizon Europe UKRI G120614). Y.T.~was supported by a Cambridge International Scholarship from the Cambridge Trust. 

\section{Background}

\subsection{Notation and conventions}

The notation $\BC$ (resp.~$\BR$; $\BQ$; $\BZ$; $\BN$) will be used to denote the set of complex numbers (resp.~real numbers; rational numbers; integers; positive integers). The notation $\lVert\bullet\rVert$ will be used to denote the Euclidean norms of complex numbers. We shall write $\widehat\BC \defeq \BC \cup \{\infty\}$ and $\BD$ for the open unit disk. For $a < b$, we shall write $[a, b]_\BZ \defeq [a, b] \cap \BZ$.

Let $(X, D)$ be a metric space. We shall write $\diam(X; D) \defeq \sup\{D(x, y) : x, y \in X\}$. Let $x \in X$ and $0 < s < t$. We shall write $B_t(x; D) \defeq \{y \in X : D(x, y) < t\}$ and $A_{s,t}(x; D) \defeq \{y \in X : s < D(x, y) < t\}$. When $D$ is the Euclidean metric, we omit it from our notation.

We call an open set $U \subset \BC$ a \emph{simply connected domain} if $U$ is connected and $\BC \setminus U$ is connected and unbounded. We call a connected open set $A \subset \BC$ a \emph{doubly connected domain} if $\BC \setminus A$ consists of one unbounded connected component and one bounded connected component, and the bounded component is not a singleton. For a doubly connected domain $A \subset \BC$, we define the \emph{conformal modulus} $\Mod(A)$ of $A$ to be the unique number $M > 0$ such that $A$ is conformally equivalent to $A_{\re^{-2\pi M},1}(0)$. For open subsets $U, V \subset \BC$, we shall write $U \Subset V$ if $\overline U \subset V$.

Let $\{E_t\}_{t > 0}$ be a family of events. Then:
\begin{itemize}
    \item We shall say that $E_t$ occurs with \emph{polynomially high probability} as $t \to 0$ (resp.~$t \to \infty$) if there exist $\alpha, C > 0$ such that $\BP\lbrack E_t\rbrack \ge 1 - Ct^\alpha$ (resp.~$\BP\lbrack E_t\rbrack \ge 1 - Ct^{-\alpha}$) for all $t > 0$. 
    \item We shall say that $E_t$ occurs with \emph{superpolynomially high probability} as $t \to 0$ (resp.~$t \to \infty$) if for each $\alpha > 0$, there exists $C = C(\alpha) > 0$ such that $\BP\lbrack E_t\rbrack \ge 1 - Ct^\alpha$ (resp.~$\BP\lbrack E_t\rbrack \ge 1 - Ct^{-\alpha}$) for all $t > 0$. 
\end{itemize}

\subsection{The Brownian sphere and the Brownian plane}

In the present subsection, we recall the construction of the Brownian sphere and the Brownian plane, together with their breadth-first explorations.

Let $\{X_t\}_{t \in [0, 1]}$ be a normalized Brownian excursion. Conditional on $X$, let $\{Z_t\}_{t \in [0, 1]}$ be the centered Gaussian process whose covariance is given by
\begin{equation*}
    \BE\lbrack Z_s Z_t\rbrack = \inf_{u \in [s, t]} X_u, \quad \forall 0 \le s \le t \le 1. 
\end{equation*}
For $0 \le s \le t \le 1$, set
\begin{equation*}
    D^\circ(s, t) \defeq Z_s + Z_t - 2\left(\inf_{u \in [s, t]} Z_u \vee \inf_{u \in [0, s] \cup [t, 1]} Z_u\right), \quad \forall 0 \le s \le t \le 1. 
\end{equation*}
Let $D$ be the largest pseudo-metric on $[0,1]$ such that $D \le D^\circ$ and such that $D(s,t) = 0$ whenever $D^\circ(s,t) = 0$. We then define $\SCS \defeq [0,1]/\sim$, where $s \sim t$ if and only if $D(s,t) = 0$. The pseudo-metric $D$ induces a metric $D_\SCS$ on $\SCS$. We also let $\SM_\SCS$ be the pushforward of Lebesgue measure on $[0,1]$ under the quotient mapping $\mathop{\mathrm{proj}} \colon [0,1] \to \SCS$. The \emph{Brownian sphere with unit area} is the two-pointed metric measure space $(\SCS, D_\SCS, \SM_\SCS; \mathop{\mathrm{proj}}(0), \mathop{\mathrm{proj}}(t_\ast))$, where $t_\ast$ is the almost surely unique time at which $Z$ attains its minimum.

Almost surely, $(\SCS, D_\SCS)$ is a geodesic metric space homeomorphic to $\BS^2$ (cf.~\cite{MR2438999,MR2399286}), and its Hausdorff dimension is $4$ (cf.~\cite{MR2336042}). Conditional on $(\SCS, D_\SCS, \SM_\SCS)$, the two marked points $\mathop{\mathrm{proj}}(0)$ and $\mathop{\mathrm{proj}}(t_\ast)$ are independent samples from $\SM_\SCS$ (cf.~\cite{GeoBMap}). Moreover, $\SM_\SCS$ is almost surely equal to the Hausdorff measure associated with the gauge function $r \mapsto r^4\log(\log(1/r))$ (cf.~\cite{MR4474537}).

The \emph{(unconditioned) Brownian sphere measure} is obtained by randomizing the total area. More precisely, it is the pushforward of the measure $\BP \otimes c A^{-3/2} \, \rd A$, for a constant $c > 0$, under the assignment $(\SCS, D_\SCS, \SM_\SCS; x, y) \mapsto (\SCS, A^{1/4}D_\SCS, A\SM_\SCS; x, y)$, where $\BP$ denotes the law of the Brownian sphere with unit area. Equivalently, one may carry out the same construction as above with It\^o's excursion measure in place of the normalized Brownian excursion.

The Brownian plane is the infinite-volume analogue of the Brownian sphere. Let $\{X_t\}_{t \in \BR}$ be such that $\{X_t\}_{t \ge 0}$ and $\{X_{-t}\}_{t \ge 0}$ are independent three-dimensional Bessel processes started from zero. Conditional on $X$, let $\{Z_t\}_{t \in \BR}$ be the centered Gaussian process with covariance
\begin{equation*}
    \BE\lbrack Z_s Z_t\rbrack = 
    \begin{cases}
        \inf_{u \in [s, t]} X_u & \text{if } s \le t \le 0 \text{ or } 0 \le s \le t; \\
        \inf_{u \in (-\infty, s] \cup [t, \infty)} X_u & \text{if } s \le 0 \le t. 
    \end{cases} 
\end{equation*}
For $s \le t$, define
\begin{equation*}
    D^\circ(s, t) \defeq Z_s + Z_t - 2\inf_{u \in [s, t]} Z_u, \quad \forall s \le t. 
\end{equation*}
Let $D$ be the largest pseudo-metric on $\BR$ such that $D \le D^\circ$ and such that $D(s,t) = 0$ whenever $D^\circ(s,t) = 0$. We define $\SCC \defeq \BR/\sim$, where $s \sim t$ if and only if $D(s,t) = 0$. Let $D_\SCC$ be the induced metric on $\SCC$, and let $\SM_\SCC$ be the pushforward of Lebesgue measure on $\BR$ under the quotient mapping $\mathop{\mathrm{proj}} \colon \BR \to \SCC$. The \emph{Brownian plane} is the pointed metric measure space $(\SCC, D_\SCC, \SM_\SCC; \mathop{\mathrm{proj}}(0))$.

The Brownian plane appears as the tangent cone of the Brownian sphere: $(\SCS, \lambda D_\SCS, \lambda^4\SM_\SCS, \mathop{\mathrm{proj}}(0))$ converges in law as $\lambda \to \infty$ to $(\SCC, D_\SCC, \SM_\SCC; \mathop{\mathrm{proj}}(0))$ with respect to the local Gromov--Hausdorff--Prokhorov topology (cf.~\cite{TBP}). It also satisfies the following scaling invariance: For each deterministic $\lambda > 0$, the rescaled pointed metric measure space $(\SCC, \lambda D_\SCC, \lambda^4\SM_\SCC; \mathop{\mathrm{proj}}(0))$ has the same law as $(\SCC, D_\SCC, \SM_\SCC; \mathop{\mathrm{proj}}(0))$.

Let $(\SCC, D_\SCC, \SM_\SCC; x)$ be a Brownian plane. In contrast with Euclidean space, the complement of a metric ball $B_t(x; D_\SCC)$ has countably many connected components. We define the filled metric ball $B_t^\bullet(x; D_\SCC)$ to be the complement of the unbounded connected component of $\SCC \setminus B_t(x; D_\SCC)$. Equivalently, one obtains $B_t^\bullet(x; D_\SCC)$ by filling in all bounded complementary components of $B_t(x; D_\SCC)$.

By \cite{Hull-TBP}, there is a c\`adl\`ag process $\{Y_s\}_{s \le 0}$ with only positive jumps, almost surely determined by $(\SCC, D_\SCC, \SM_\SCC; x)$, such that for each deterministic $t \ge 0$, in probability,
\begin{equation*}
    Y_{-t} = \lim_{\varepsilon \to 0} \varepsilon^{-2} \SM_\SCC(B_{t + \varepsilon}(x; D_\SCC) \setminus B_t^\bullet(x; D_\SCC)). 
\end{equation*}
We call $Y_{-t}$ the \emph{boundary length} of the filled metric ball $B_t^\bullet(x; D_\SCC)$. The law of the process $\{Y_{-t}\}_{t \ge 0}$ is characterized as follows:
\begin{itemize}
    \item $Y_{-t} \to \infty$ as $t \to \infty$;
    \item for each deterministic $x > 0$, if $\tau_x \defeq \sup\{t \ge 0 : Y_{-t} = x\}$, then $\{Y_{s - \tau_x}\}_{s \in [0, \tau_x]}$ has the law of a $3/2$-stable CSBP started from $x$ and stopped upon hitting $0$.
\end{itemize}

For each deterministic $t > 0$, the boundary length $Y_{-t}$ of $B_t^\bullet(x; D_\SCC)$ is almost surely determined by
\begin{equation}\label{eq:filled-metric-ball-cone}
    \left(B_t^\bullet(x; D_\SCC), D_\SCC(\bullet, \bullet; B_t^\bullet(x; D_\SCC)), \SM_\SCC|_{B_t^\bullet(x; D_\SCC)}; x\right), 
\end{equation}
where $D_\SCC(\bullet, \bullet; B_t^\bullet(x; D_\SCC))$ denotes the internal metric, namely the infimum of the $D_\SCC$-lengths of paths contained in $B_t^\bullet(x; D_\SCC)$. The same boundary length is also almost surely determined by
\begin{equation}\label{eq:Brownian-horn}
    \left(\SCC \setminus B_t^\bullet(x; D_\SCC), D_\SCC(\bullet, \bullet; \SCC \setminus B_t^\bullet(x; D_\SCC)), \SM_\SCC|_{\SCC \setminus B_t^\bullet(x; D_\SCC)}\right). 
\end{equation}
Moreover, the objects in~\eqref{eq:filled-metric-ball-cone} and~\eqref{eq:Brownian-horn} are conditionally independent given $Y_{-t}$. The conditional law of~\eqref{eq:Brownian-horn} given $Y_{-t}$ does not depend on $t$. This law is called the law of a \emph{Brownian disk} with boundary length $Y_{-t}$ and infinite area; see \cite{MR4341082}.

The Brownian disk with infinite area satisfies a natural scaling rule. If $(\SCD, D_\SCD, \SM_\SCD)$ is a Brownian disk with boundary length $\ell$ and infinite area, then for every deterministic $\lambda > 0$, the rescaled triple $(\SCD, \lambda D_\SCD, \lambda^4\SM_\SCD)$ is a Brownian disk with boundary length $\lambda^2\ell$ and infinite area.

Let $0 \le s < t$. We define the \emph{metric band} by $A_{s,t}^\bullet(x; D_\SCC) \defeq B_t^\bullet(x; D_\SCC) \setminus B_s^\bullet(x; D_\SCC)$. The conditional law of
\begin{equation*}
    \left(A_{s,t}^\bullet(x; D_\SCC), D_\SCC(\bullet, \bullet; A_{s,t}^\bullet(x; D_\SCC)), \SM_\SCC|_{A_{s,t}^\bullet(x; D_\SCC)}; \partial B_s^\bullet(x; D_\SCC), \partial B_t^\bullet(x; D_\SCC)\right)
\end{equation*}
given $Y_{-s}$ and $Y_{-t}$ depends only on $Y_{-s}$, $Y_{-t}$, and the width $t - s$. Metric bands also satisfy the corresponding scaling property.

The Brownian sphere admits an analogous breadth-first exploration. We will not describe this exploration in detail, since it will not be needed in the present paper.

\subsection{Gaussian free field}

The Gaussian free field (GFF) is a centered Gaussian process. We can write it as a series expansion $\sum_{j = 1}^\infty X_j \psi_j$, where $(X_j)_{j \ge 1}$ is a sequence of independent standard Gaussian random variables, and $(\psi_j)_{j \ge 1}$ is an orthonormal basis for a corresponding Hilbert space using the Dirichlet inner product. Here are the two specific cases:
\begin{itemize}
    \item For a simply connected domain $U \subset \BC$, the \emph{zero-boundary GFF} $\Phi$ is a random Schwartz distribution on $U$, acting on test functions $f \in C_c^\infty(U)$. Its covariance function is
    \begin{equation*}
        \BE\lbrack\langle\Phi, f\rangle\langle\Phi, g\rangle\rbrack = \int_{\BC \times \BC} G_U(x, y) f(x) g(y) \, \rd x \, \rd y, 
    \end{equation*}
    where $G_U(\bullet, \bullet)$ is the Green function for $U$, normalized so that $G_U(x, y) \sim -\log(\lVert x - y\rVert)$ as $y \to x$. The Hilbert space $H_0^1(U)$ is the completion of $C_c^\infty(U)$ under the inner product $\langle f, g\rangle_{H_0^1(U)} \defeq \frac1{2\pi}\int_U \nabla f(z) \cdot \nabla g(z) \, \rd z$.
    \item The \emph{whole-plane GFF} $\Phi$ is defined on $\BC$ as a random Schwartz distribution modulo additive constants. The test functions must have mean zero, meaning $\{f \in C_c^\infty(\BC) : \int_\BC f(z) \, \rd z = 0\}$. Its covariance is
    \begin{equation*}
        \BE\lbrack\langle\Phi, f\rangle\langle\Phi, g\rangle\rbrack = -\int_{\BC \times \BC} \log(\lVert x - y\rVert) f(x) g(y) \, \rd x \, \rd y. 
    \end{equation*}
    Its Hilbert space $H^1(\BC)$ is the completion of $\{f \in C^\infty(\BC) / \BR : \int_\BC \|\nabla f(z)\|^2 \, \rd z < \infty\}$ under the inner product $\langle f, g\rangle_{H^1(\BC)} \defeq \frac1{2\pi}\int_\BC \nabla f(z) \cdot \nabla g(z) \, \rd z$.
\end{itemize}

The GFF satisfies the domain Markov property. Let $U \subset \BC$ be a deterministic simply connected domain, or let $U = \BC$. Let $\Phi$ be a zero-boundary GFF on $U$ in the first case, and a whole-plane GFF in the second case. Let $V \subset U$ be a simply connected subdomain. Let $\kH_V$ denote the harmonic extension to $V$ of $\Phi|_{U \setminus V}$. Then $\Phi - \kH_V$ is independent of $\kH_V$ and has the law of a zero-boundary GFF on $V$.

We observe that $H^1(\BC)$ has the orthogonal decomposition $H^1(\BC) = H^{\mathrm{rad}}(\BC) \oplus H^{\mathrm{circ}}(\BC)$, where
\begin{align*}
    H^{\mathrm{rad}}(\BC) &\defeq \left\{f \in H^1(\BC) : f \text{ is constant on } \{z \in \BC : |z| = t\} \text{ for each } t > 0\right\}; \\
    H^{\mathrm{circ}}(\BC) &\defeq \left\{f \in H^1(\BC) : f \text{ has mean zero on } \{z \in \BC : |z| = t\} \text{ for each } t > 0\right\}. 
\end{align*}
Let $\Phi$ be a whole-plane GFF. The projection of $\Phi$ onto $H^{\mathrm{rad}}(\BC)$ is called the \emph{circle average process} of $\Phi$. The process
\begin{equation*}
    \BR \to \BR \colon t \mapsto (\text{the circle average of } \Phi \text{ over } \partial B_{\re^{-t}}(0))
\end{equation*}
has the law of a standard two-sided Brownian motion modulo additive constant.

\subsection{Liouville quantum gravity}\label{subsection:background-LQG}

Let $\gamma \in (0, 2)$ and set $Q \defeq 2/\gamma + \gamma/2$. A \emph{$\gamma$-LQG surface} is an equivalence class of triples $(U,g,\Phi)$, where $U$ is a Riemann surface, $g$ is a conformal Riemannian metric on $U$ (i.e., a Riemannian metric of the form $\varrho(z)^2 \, \rd z \, \rd\overline z$ in each local chart), and $\Phi$ is a Schwartz distribution on $(U,g)$, typically given by an instance of the Gaussian free field. We declare two triples $(U_1,g_1,\Phi_1)$ and $(U_2,g_2,\Phi_2)$ to be equivalent if there is a conformal mapping $f \colon U_2 \to U_1$ and a smooth function $\varrho \colon U_2 \to \BR_{>0}$ such that $f^\ast(g_1) = \varrho^2 g_2$ and $\Phi_2 = \Phi_1 \circ f + Q\log(\varrho)$. A choice of representative $(U,g,\Phi)$ is called an \emph{embedding} of the $\gamma$-LQG surface.

We will also use marked $\gamma$-LQG surfaces. Suppose that $x_1,\dots,x_n$ are points in the completion of $U_1$, and that $y_1,\dots,y_n$ are points in the completion of $U_2$. Then $(U_1,g_1,\Phi_1; x_1,\dots,x_n)$ and $(U_2,g_2,\Phi_2; y_1,\dots,y_n)$ are equivalent if the above conformal mapping $f$ can be chosen so that $f(y_j) = x_j$ for every $j \in [1,n]_\BZ$. In the present paper, we will only consider embeddings for which $U$ is an open subset of $\BC$ and $g$ is the Euclidean metric, so we suppress $g$ from the notation.

As constructed in \cite{LQGKPZ}, a $\gamma$-LQG surface $(U,\Phi)$ carries a canonical random measure $\SM_\Phi$, called the \emph{$\gamma$-LQG area measure}. It is heuristically written as ``$\re^{\gamma\Phi} \, \rd x \, \rd y$'', and is invariant under changes of embedding in the sense described above.

There is also a canonical random metric $D_\Phi$ associated with $(U,\Phi)$, called the \emph{$\gamma$-LQG metric}. For $\gamma = \sqrt{8/3}$, this metric was constructed in \cite{MR3572845,MR4050102,MR4225028,MR4348679,MR4242633}. The construction for general $\gamma \in (0,2)$ was later obtained in \cite{TightLFPP,WeakLQGMet,ConfLQG,LocMetGFF,ExUniLQG}. Formally, $D_\Phi$ corresponds to the Riemannian metric ``$\re^{\gamma\Phi}(\rd x^2 + \rd y^2)$''. It is characterized by the following properties:
\begin{enumerate}[label=(\Roman*),ref=\Roman*]
    \item {\bfseries (Length metric)} $D_\Phi$ is almost surely a length metric that induces the Euclidean topology on $U$.
    \item {\bfseries (Locality)} For each deterministic open subset $V \subset U$, the internal metric $D_\Phi(\bullet, \bullet; V)$ is almost surely determined by $\Phi|_V$.
    \item {\bfseries (Weyl scaling)} There is a unique constant $\xi = \xi(\gamma) > 0$ (with $\xi = 1/\sqrt6$ when $\gamma = \sqrt{8/3}$) such that, almost surely, 
    \begin{equation*}
        D_{\Phi + \phi}(x, y) = \inf_{P \colon x \to y} \int_0^{\len(P; D_\Phi)} \re^{\xi\phi(P(t))} \, \rd t, \quad \forall x, y \in U, 
    \end{equation*}
    where the infimum is over all continuous paths $P$ from $x$ to $y$ parameterized by $D_\Phi$-length.
    \item {\bfseries (Coordinate change)} Let $f \colon \widetilde U \to U$ be a deterministic conformal mapping. Then, almost surely, 
    \begin{equation*}
        D_{\Phi \circ f + Q\log(\lVert f^\prime\rVert)}(x, y) = D_\Phi(f(x), f(y)), \quad \forall x, y \in \widetilde U. 
    \end{equation*}
\end{enumerate}

We next recall the definitions of the $\gamma$-LQG cone and the $\gamma$-LQG sphere; see \cite{dms2021mating}. Let $\{A_t\}_{t \in \BR}$ be the process defined as follows. For $t \ge 0$, set $A_t \defeq B_t + \gamma t$, where $\{B_t\}_{t \ge 0}$ is a standard Brownian motion started from $0$. For $t < 0$, set $A_t \defeq \widetilde B_{-t} + \gamma t$, where $\{\widetilde B_t\}_{t \ge 0}$ is an independent Brownian motion started from $0$ and conditioned so that $\widetilde B_t + (Q - \gamma)t > 0$ for every $t > 0$. Let $\Phi$ be a whole-plane GFF independent of $\{A_t\}_{t \in \BR}$. We define $\Phi^{\mathrm{cone}}$ to be the random Schwartz distribution whose projection onto $H^{\mathrm{rad}}(\BC)$ is given by $A_{-\log(|\bullet|)}$, and whose projection onto $H^{\mathrm{circ}}(\BC)$ agrees with that of $\Phi$. The pointed $\gamma$-LQG surface represented by $(\BC, \Phi^{\mathrm{cone}}; 0, \infty)$ is called a \emph{$\gamma$-LQG cone}. This choice of representative is called its \emph{circle average embedding}.

We now define the $\gamma$-LQG sphere. Let $e$ be sampled from the Bessel excursion measure of dimension $4 - 8/\gamma^2$. Let $\{A_t^e\}_{t \in \BR}$ be the process $(2/\gamma)\log(e)$, reparameterized so that it has quadratic variation $\rd t$. If, in the above construction of the $\gamma$-LQG cone, we replace $\{A_t\}_{t \in \BR}$ by $\{A_t^e\}_{t \in \BR}$, then we obtain the \emph{unconditioned two-pointed $\gamma$-LQG sphere measure}. This measure decomposes according to the total area as
\begin{equation*}
    C \int_0^\infty A^{-4/\gamma^2} \BP_{2,A} \, \rd A, 
\end{equation*}
where $C = C(\gamma) > 0$ is a constant and $\BP_{2,A}$ is the law of the two-pointed $\gamma$-LQG sphere conditioned to have area $A$.

Let $(\widehat\BC, \Phi; 0, \infty)$ be a two-pointed $\gamma$-LQG sphere with unit area. Conditional on $\Phi$, let $x$ and $y$ be independent samples from the $\gamma$-LQG area measure $\SM_\Phi$. Then $(\widehat\BC, \Phi; 0, \infty)$ and $(\widehat\BC, \Phi; x, y)$ agree in law as two-pointed $\gamma$-LQG surfaces. Therefore, for each $n \in \BN \cup \{0\}$, we define the \emph{$n$-pointed $\gamma$-LQG sphere with unit area} to be the $n$-pointed $\gamma$-LQG surface represented by $(\widehat\BC, \Phi; x_1, \dots, x_n)$, where $x_1, \dots, x_n$ are conditionally independent samples from $\SM_\Phi$.

The following absolute continuity statement will allow us to transfer several results from the whole-plane GFF to the $\gamma$-LQG sphere.

\begin{lemma}\label{lem:three-pointed-sphere}
    Let $(\widehat\BC, \Phi_1; 0, 1, \infty)$ be a three-pointed $\gamma$-LQG sphere with unit area. Let $\Phi_2$ be a whole-plane GFF. Then for each bounded open subset $U \Subset \BC \setminus \{0, 1\}$, the laws of $\Phi_1|_U$ and $\Phi_2|_U$, viewed as random Schwartz distributions modulo additive constants, are mutually absolutely continuous. 
\end{lemma}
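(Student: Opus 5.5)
The plan is to express the three-pointed unit-area sphere through the two-pointed unconditioned sphere measure, and then show that, away from the marked points, that measure is locally equivalent to the whole-plane GFF. The argument has three steps, taken in order.

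\textbf{Step 1: localize to a small disk.} Absolute continuity of laws restricted to $U$ passes to subsets. It also passes through finite covers, provided the restrictions to the pieces are handled jointly. So, modulo additive constants, it suffices to compare the laws of $\Phi_1|_V$ and $\Phi_2|_V$ for a neighborhood $V$ with $U \Subset V \Subset \BC\setminus\{0,1\}$. Fix such a $V$.

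\textbf{Step 2: reduce the three-pointed sphere to the two-pointed one with a weighted third point.} Start from the unconditioned two-pointed measure $(\widehat\BC,\Phi;0,\infty)$ in its circle average embedding. Add a third point $z$ sampled from $\SM_\Phi$, and then apply the Möbius map fixing $0$ and $\infty$ that sends $z$ to $1$. By the definition of the $n$-pointed sphere, the result, after conditioning on area $1$, is the three-pointed unit-area sphere.

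Next, apply Girsanov for the GFF. Adding a point sampled from $\SM_\Phi$ amounts to weighting by $\SM_\Phi$. Under this weighting, $\Phi$ near $z$ becomes $\widetilde\Phi - \gamma\log|\bullet - z|$ plus a smooth function, where $\widetilde\Phi$ has the unweighted law.

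Away from $0$, $1$ and $\infty$, the field is then a sum of two pieces:
\begin{itemize}
\item the circle-average (radial) part, a Bessel-type process $A^e_t$;
\item the circular projection, which agrees with that of a whole-plane GFF.
\end{itemize}
On an annulus $\{r<|z|<R\}$ the radial part is a finite-time segment of a process that is locally absolutely continuous with respect to Brownian motion, once we condition on the excursion being alive on that time interval. Combining this with the independence of the radial and circular components shows the following. On annuli avoiding $0$ and $\infty$, and modulo constants, the unconditioned sphere field is mutually absolutely continuous with respect to a whole-plane GFF. The same holds after adding the log singularity at $1$, since $\gamma\log|\bullet-1|$ is smooth on $V$.

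\textbf{Step 3: pass from the unconditioned measure to the unit-area conditioning, and assemble.} Here I would use the area decomposition $C\int A^{-4/\gamma^2}\BP_{2,A}\,\rd A$ and the scaling map. Changing the area corresponds to adding a constant to the field together with a rescaling. Once the points $0,1,\infty$ are fixed, the rescaling is already absorbed, so what remains is an additive constant, which is invisible modulo constants.

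Consequently, the law of $\Phi_1|_V$ modulo constants is a mixture over $A$ of laws that are all equivalent to a fixed law. Both directions of absolute continuity then follow, because every mixture component is equivalent to that law and the mixture weights have full support.

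\textbf{Main obstacle.} The hardest point is the joint treatment of the marked point at $1$ and the conditioning on the total area $1$. The area is a global functional, not measurable with respect to $\Phi|_V$. I would handle it by conditioning instead on the field outside $V$, via the domain Markov property, and using that the area of $V$ has a positive continuous density. An alternative is to cite a standard result from \cite{dms2021mating} that conditioning on area does not affect local absolute continuity.
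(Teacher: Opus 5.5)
The paper gives no argument here beyond citing \cite[Lemma~2.4]{BorgaGwynneSun2025}, so your sketch has to stand on its own. There is a genuine gap in Step~2.

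\textbf{Step~3 is sound.} Your Step~3 is essentially correct, and it is the real simplification. With $0,1,\infty$ fixed, the area-$A$ three-pointed law is the image of the unit-area law under $\Phi\mapsto\Phi+\gamma^{-1}\log A$. So modulo constants all slices of the area disintegration coincide. Hence the unconditioned three-pointed measure, restricted to $V$ and taken modulo constants, has the same null sets as the law of $\Phi_1|_V$. The area conditioning is therefore not the obstacle, and conditioning on the field outside $V$ is unnecessary.

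\textbf{The gap in Step~2.} You assert that, in the circle-average embedding you start from, the unconditioned two-pointed sphere field is mutually absolutely continuous with a whole-plane GFF (modulo constants) on annuli avoiding $0$ and $\infty$. This is false. That embedding fixes the horizontal translation of $A^e$ by a deterministic rule, so on any annulus containing the normalizing circle the radial part has a pinned feature. For example, if the maximum of $A^e$ is placed at $t=0$, consider the event that $A^e$ attains its maximum over $[-\log 2,\log 2]$ exactly at $t=0$. Modulo constants, $A^e$ is a deterministic affine function of the circle-average process, so this event is determined by the field modulo constants. It has full sphere measure and zero GFF probability. Three further points:
\begin{itemize}
\item ``Conditioning on the excursion being alive'' does not address this: after the quadratic-variation reparameterization, $A^e_t$ is defined for every $t\in\BR$.
\item One-sided local absolute continuity with respect to Brownian motion would not give the converse direction in any case.
\item Weighting by $\SM_\Phi(\rd z)$ does not leave the radial part with its unweighted law; it tilts the excursion measure by $\re^{\gamma A^e_{-\log|z|}}$. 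This is harmless for null sets, but it has to be accounted for.
\end{itemize}

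\textbf{How to close it.} What rescues your route is the third point, which you never use for this purpose. The dilation $w\mapsto w/z$ moves the normalizing circle to radius $1/|z|$, and given $A^e$ the variable $\log|z|$ has an everywhere-positive density on $\BR$. So in the $0,1,\infty$ embedding the radial part on a fixed annulus is a mixture over the position of the pinned time. Meanwhile the circular part becomes independent of $(A^e,z)$. It has the law of the circular part of a whole-plane GFF, shifted by $\gamma(\log\max(|w|,1)-\log|w-1|)$. That shift is Lipschitz rather than smooth on $V$, which is still fine for Cameron--Martin after a cutoff.

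You would then still have to prove that this mixture is equivalent to Brownian motion modulo constants on compact time intervals. One way is Williams' decomposition at the maximum: each side is drifted Brownian motion conditioned to stay below the maximum, which on compact intervals is equivalent to a Brownian meander. Compare this with the decomposition of Brownian motion at its argmax, and treat separately the event that the maximum lies outside the interval.

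Alternatively, you can bypass the radial analysis entirely with the Liouville-field description of the three-pointed sphere. There, modulo constants, the field is a whole-plane GFF plus a deterministic function that is Lipschitz on $V$, and your Step~3 then finishes the proof.
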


\begin{proof}
    See, e.g., \cite[Lemma~2.4]{BorgaGwynneSun2025}. 
\end{proof}

It was established in \cite{MR4050102,MR4225028,MR4348679,MR4242633} that the Brownian sphere and the $\sqrt{8/3}$-LQG sphere agree in law as two-pointed metric measure spaces. Similarly, the Brownian plane and the $\sqrt{8/3}$-LQG cone agree in law as pointed metric measure spaces.

Let $\kappa \defeq \gamma^2 \in (0, 4)$ and $\kappa^\prime \defeq 16/\kappa > 4$. The whole-plane space-filling SLE$_{\kappa^\prime}$ curve from $\infty$ to $\infty$, first constructed in \cite{IG4}, provides a natural way to formulate the translation invariance property of the $\gamma$-LQG cone. We denote this curve by $\eta^\prime \colon \BR \to \BC$. It is almost surely non-self-crossing and space-filling, and satisfies $\eta^\prime(t) \to \infty$ as $t \to \pm\infty$. Up to reparameterization, the law of $\eta^\prime$ is invariant under M\"obius transformations that fix $\infty$. Thus, the whole-plane space-filling SLE$_{\kappa^\prime}$ curve is naturally defined on a Riemann sphere (hence a $\gamma$-LQG sphere or cone). We may choose the parameterization so that $\eta^\prime(0) = 0$. As shown in \cite{dms2021mating}, if $(\BC, \Phi; 0, \infty)$ is a $\gamma$-LQG cone independent of $\eta^\prime$, and if $\eta^\prime$ is parameterized by the $\gamma$-LQG area measure $\SM_\Phi$, then for each deterministic time $t \in \BR$, the pointed $\gamma$-LQG surface $(\BC, \Phi; \eta^\prime(t), \infty)$ also has the law of a $\gamma$-LQG cone.

\begin{lemma}\label{lem:net}
    Let $(\BC, \Phi; 0, \infty)$ be a $\sqrt{8/3}$-LQG cone. Let $\eta^\prime \colon \BR \to \BC$ be an independent whole-plane space-filling SLE$_6$ curve from $\infty$ to $\infty$, parameterized by the LQG area measure $\SM_\Phi$. Fix $T > 0$. Given $\Phi$ and $\eta^\prime$, let $\{x_j\}_{j \in \BN}$ be conditionally independent samples from $\SM_\Phi|_{\eta^\prime([-T, T])}$. Then for each $\zeta > 0$, almost surely on the event that $B_1^\bullet(0; D_\Phi) \subset \eta^\prime([-T, T])$, there exists $\varepsilon_\ast \in (0, 1)$ such that
    \begin{equation*}
        B_1^\bullet(0; D_\Phi) \subset \bigcup \left\{B_\varepsilon(x_j; D_\Phi) : j \in [1, \varepsilon^{-4 - \zeta}]_\BZ, \ x_j \in B_1^\bullet(0; D_\Phi)\right\}, \quad \forall \varepsilon \in (0, \varepsilon_\ast].
    \end{equation*}
\end{lemma}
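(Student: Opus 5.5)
The plan is to reduce the covering statement to a lower bound on the mass of small metric balls together with a union bound over a finite $\varepsilon$-net, and then to use Borel--Cantelli along a dyadic sequence of scales. Write $K \defeq B_1^\bullet(0; D_\Phi)$, which is compact, and $M \defeq \SM_\Phi(\eta^\prime([-T,T])) = 2T$, since $\eta^\prime$ is parameterized by LQG area. Conditionally on $(\Phi, \eta^\prime)$, each $x_j$ falls in a fixed set $B$ with probability $\SM_\Phi(B \cap \eta^\prime([-T,T]))/(2T)$.

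\textbf{Step 1 (volume lower bound).} I will show that almost surely there exist $r_0>0$ and, for each $\zeta'>0$, a constant so that
\[
\SM_\Phi(B_r(z; D_\Phi)) \ge r^{4+\zeta'} \quad \text{for all } z \in K, \ r \in (0, r_0].
\]
This is the standard uniform volume estimate for the Brownian map or $\sqrt{8/3}$-LQG metric. In the Brownian map it follows from the fact that the mass of balls of radius $r$ is at least $r^{4+\zeta'}$ uniformly over the (compact) space, proved via the snake representation and Hölder-type estimates (Le Gall). Through the equivalence of the Brownian plane and the $\sqrt{8/3}$-LQG cone, and local absolute continuity, this transfers to $K$. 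Alternatively, one can deduce it directly from LQG moment bounds for $\SM_\Phi$ of Euclidean balls together with the Hölder comparison of $D_\Phi$ with the Euclidean metric. On the event $K \subset \eta^\prime([-T,T])$, balls centered at $z\in K$ of small radius are essentially contained in $\eta^\prime([-T,T])$. More precisely, I only need the intersection $B_r(z)\cap K$ to carry mass, so I will apply the bound to a point $z' \in K$ at distance at most $r/2$ inside $B_r(z)$ along a geodesic to $0$ (which stays in $K$). This gives $\SM_\Phi(B_{r}(z)\cap K) \ge (r/2)^{4+\zeta'}$ for $z \in K$.

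\textbf{Step 2 (net and union bound).} For $\varepsilon = 2^{-k}$, pick a $\varepsilon/2$-net $\{z_i\}$ of $K$ of cardinality $N_k \le \varepsilon^{-4-\zeta'}$; this follows from Step 1 by disjoint ball packing, since $\SM_\Phi(K)<\infty$. Let $n = \lfloor \varepsilon^{-4-\zeta} \rfloor$. If every $B_{\varepsilon/4}(z_i)\cap K$ contains some $x_j$ with $j \le n$, then every point of $K$ lies within $3\varepsilon/4$ of such an $x_j \in K$, which gives the conclusion at scale $\varepsilon$. Intermediate $\varepsilon \in [2^{-k-1},2^{-k}]$ are handled by applying the dyadic statement at scale $2^{-k-1}$, with index range $2^{(k+1)(4+\zeta)} $, and absorbing the constant factor by slightly shrinking $\zeta$. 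The conditional failure probability is at most
\[
N_k \bigl(1 - c\varepsilon^{4+\zeta'}/(2T)\bigr)^{n} \le \varepsilon^{-4-\zeta'} \exp\bigl(-c'\varepsilon^{\zeta'-\zeta}\bigr).
\]
Choosing $\zeta' < \zeta$, this is summable in $k$, so by Borel--Cantelli (conditionally on $\Phi,\eta^\prime$) the conclusion holds for all sufficiently small dyadic $\varepsilon$.

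\textbf{Main obstacle.} The main difficulty is Step 1: obtaining a uniform almost-sure lower bound on ball volumes with exponent $4+\zeta'$, and making sure it applies inside $K$ for the LQG cone rather than the Brownian sphere. I would first try to cite the known Brownian map volume bounds and transfer them through the cone/plane identification and the tangent cone relation, since $K$ is a compact region of the Brownian plane. If that transfer is awkward, the fallback is the direct GFF argument using multifractal moment bounds.
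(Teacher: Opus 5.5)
Your proposal is correct and follows the paper's argument: a uniform almost-sure lower bound $\SM_\Phi(B_\varepsilon(x; D_\Phi)) \ge \varepsilon^{4+\zeta'}$ for $x$ in a compact set, followed by the net, union-bound and Borel--Cantelli covering argument that the paper leaves as ``standard''. The only real difference is the source of the volume bound. The paper gets it directly in the Brownian plane from Kolmogorov's criterion: the label process $Z$ is locally $\alpha$-H\"older for every $\alpha < 1/4$ and $D \le D^\circ$, so $\mathrm{proj}$ maps a time interval of length of order $\varepsilon^{1/\alpha}$ into an $\varepsilon$-ball. This avoids the transfer from the Brownian sphere that you flag as your main obstacle.

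There are two small caveats:
\begin{enumerate}
\item Your GFF fallback would not give the sharp exponent. Combining Euclidean moment bounds with the worst-case H\"older comparison between $D_\Phi$ and the Euclidean metric yields a uniform exponent far above $4$.
\item $B_{r/2}(z')$ need not lie in $K$ when $z$ is in a filled-in bubble near $\partial B_1(0; D_\Phi)$. Instead, use $B_{r/2}(z)$ itself when it lies in $K$. Otherwise $D_\Phi(0,z) < 1 + r/2$, so you can move distance $\min\{3r/4, D_\Phi(0,z)\}$ along the geodesic to $0$ and use a ball of radius $r/4$.
\end{enumerate}
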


\begin{proof}
    Let $X$ and $Z$ be as in the definition of the Brownian plane. By using Kolmogorov's continuity criterion, one verifies immediately that $Z$ is almost surely locally $\alpha$-H\"older continuous of every order $\alpha \in (0, 1/4)$. This implies that, almost surely, for each compact subset $K \subset \BC$, there exists $\varepsilon_\ast \in (0, 1)$ such that 
    \begin{equation*}
        \SM_\Phi(B_\varepsilon(x; D_\Phi)) \ge \varepsilon^{4 + \zeta}, \quad \forall x \in K, \ \forall \varepsilon \in (0, \varepsilon_\ast]. 
    \end{equation*}
    Combining this with a standard argument, we complete the proof. 
\end{proof}

\subsection{Independence for the Brownian plane and the Gaussian free field}\label{subsection:independence}

In the present subsection, we record several results which can be summarized as follows:
\begin{itemize}
    \item Disjoint concentric metric bands of a Brownian plane are ``nearly independent''.
    \item The restrictions of a whole-plane GFF to disjoint concentric Euclidean annuli, or to disjoint uniformly separated Euclidean balls, are ``nearly independent'' when viewed modulo additive constants.
\end{itemize}
These properties make the Brownian plane and the whole-plane GFF particularly convenient to work with.

\begin{lemma}\label{lem:independence-across-bands}
    For each $\lambda \in (0, 1)$, $\alpha > 0$, and $b \in (0, 1)$, there exists $p = p(\lambda, \alpha, b) \in (0, 1)$ and $C = C(\lambda, \alpha, b) > 0$ such that the following is true: Let $(\BC, \Phi; 0, \infty)$ be a $\sqrt{8/3}$-LQG cone. Let $\{t_j\}_{j \in \BN}$ be a decreasing sequence of positive real numbers such that $t_{j + 1}/t_j \le \lambda$ for every $j \in \BN$. Let $\{E_j\}_{j \in \BN}$ be a sequence of events such that each $E_j$ is almost surely determined by the $\sqrt{8/3}$-LQG surface parameterized by $A_{t_{j + 1},t_j}^\bullet(0; D_\Phi)$. Suppose that $\BP\lbrack E_j\rbrack \ge p$ for every $j \in \BN$. Then for each $n \in \BN$, it holds with probability at least $1 - C\re^{-\alpha n}$ that there are at least $bn$ values of $j \in [1,n]_\BZ$ for which $E_j$ occurs.
\end{lemma}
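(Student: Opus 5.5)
The idea is to use the Markov structure of the breadth-first exploration of the Brownian plane, i.e.\ the $\sqrt{8/3}$-LQG cone viewed through the identification with the Brownian plane. By the conditional independence statements recalled above, given the boundary lengths $Y_{-t_1}, Y_{-t_2}, \dots$, the metric bands $A^\bullet_{t_{j+1},t_j}(0;D_\Phi)$ are conditionally independent. The conditional law of the $j$-th band depends only on $(Y_{-t_{j+1}}, Y_{-t_j}, t_j - t_{j+1})$. The process $j \mapsto Y_{-t_j}$, read from the outside inward, is a Markov chain: the time-reversal of a $3/2$-stable CSBP conditioned to die, sampled at the times $t_j$. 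By scaling, the natural normalized state is $R_j \defeq Y_{-t_j}/t_j^2$. The event $E_j$ is measurable with respect to the $j$-th band. So conditionally on the boundary-length sequence, the indicators $\mathbf 1_{E_j}$ are independent with conditional probabilities $q_j = g_j(Y_{-t_{j+1}}, Y_{-t_j})$, where $g_j$ is determined by the law of a metric band.

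The key step is to produce a ``good'' set of configurations on which $q_j$ is close to $1$. First fix $\delta>0$ and a compact window $K = [a,a^{-1}]$ with $a = a(\lambda,\alpha,b)$ small. Using the scaling of the breadth-first exploration, I would show that the $j$-th normalized pair $(R_j, R_{j+1})$ has a law that is tight uniformly in $j$ and in the sequence $\{t_j\}$; this uses $t_{j+1}/t_j \le \lambda$ and, for the ratio, the observation that one may add intermediate points. Hence $\BP[(R_j,R_{j+1}) \notin K^2]$ is small. Since $\BP[E_j] = \BE[q_j] \ge p$, Markov's inequality applied to $1-q_j$ gives that $\{q_j < 1-\delta\}$ has probability at most $(1-p)/\delta$. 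This is small when $p$ is close to $1$.

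To upgrade these one-step statements to exponential concentration, I would run a Chernoff argument on the chain. Let $G_j = \{(R_j,R_{j+1}) \in K^2,\ q_j \ge 1-\delta\}$. It suffices to show that with probability $1 - C e^{-\alpha n}$, at least $b' n$ indices have $G_j$ occurring, for some $b' \in (b,1)$. This is because, conditionally on the $Y$'s, the $E_j$ with $j$ good are independent with probabilities at least $1-\delta$, and Hoeffding gives the rest. For the $G_j$ count, the main tool is a uniform mixing estimate. Whatever the state $R_j$, the conditional probability that $G_{j+1}$ (or $G_{j+2}$) fails is at most some $\epsilon$ that can be made small. This comes from the transition density of the CSBP being bounded uniformly on compacts after rescaling by $t_{j+1}^2$, and it yields a Doeblin-type bound. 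Then a standard exponential supermartingale bound gives that the number of failures among $n$ steps exceeds $(1-b')n$ with probability at most $(C\epsilon)^{(1-b')n}\binom{n}{\cdot} \le Ce^{-\alpha n}$ once $\epsilon$ is small.

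The main obstacle is that $q_j$ is not controlled pointwise from $\BP[E_j]\ge p$ alone. A bad event could concentrate on atypical boundary-length configurations, and the bound must hold conditionally on the past, not just on average. I would handle this by choosing $p$ so close to $1$ that the $\{q_j<1-\delta\}$ event has probability much smaller than the minimal mass that the conditional transition kernel, started from any state in $K$, puts on any set. The uniform lower and upper density bounds on $K$ then transfer the unconditional bound to a conditional bound given $R_j$ (and given the past, by the Markov property). States outside $K$ are handled by showing the chain re-enters $K$ with uniformly positive probability each step, with exponential tails coming from the CSBP tail estimates.
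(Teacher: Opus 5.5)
You have the right architecture: condition on the boundary lengths and use the conditional independence of the bands. Then transfer the small unconditional probability of $\{q_j<1-\delta\}$ to a small conditional probability via a bound on the transition density of the normalized boundary length relative to its scale-invariant Gamma marginal, skipping one scale because $q_j$ depends on two consecutive boundary lengths, and finish with a binomial count. The paper defers the proof to \cite[Remark~3.12]{2026arXiv260324473M}, and the parallel argument it gives in the text (the proof of \Cref{lem:independence-across-bands-moduli-proof}) has the same shape.

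The genuine gap is your control of the scales with $R_j\notin K$. It is false that the conditional probability of $G_{j+1}^c$ or $G_{j+2}^c$ is small ``whatever the state''. It is also false that the chain re-enters $K$ with uniformly positive probability, because a large normalized boundary length only decays geometrically. Reading inward, $Y$ is a $3/2$-stable CSBP conditioned to die at radius $0$ (not its time-reversal). A computation with the explicit CSBP formulas gives $\BE[R_{j+1}\mid R_j]=\frac{t_{j+1}}{t_j}R_j+O(1)$, with concentration when $R_j$ is large. So for $t_j=\lambda^j$ and $R_j=y\gg1$ you get $R_{j+1}\approx\lambda y$ with high probability, and the chain stays outside $K$ for about $\log(y)/\log(1/\lambda)$ consecutive steps. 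Moreover, a uniform re-entry probability $\rho$ would by itself only give a fixed exponential rate of order $(1-b')\log(1/(1-\rho))$, whereas the lemma demands arbitrary $\alpha$ at fixed $\lambda$.

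The missing ingredient is a large-deviation bound on the number of $j\le n$ with $Y_{-t_j}>Kt_j^2$, with rate tending to infinity as $K\to\infty$. This is what \Cref{lem:independence-across-bands-boundary-lengths} (i.e.\ \cite[Lemma~3.2,~(ii)]{2026arXiv260324473M}) supplies. Apply it to each of the $k$ subsequences $\{t_{i+kj}\}_{j\ge0}$, $i\in[1,k]_\BZ$, whose ratios are at most $\lambda^k$. Then, outside an event of probability at most $k\exp(-\lambda^{-k\zeta}\lfloor n/k\rfloor)$, all but a small fraction of scales satisfy $Y_{-t_j}\le\lambda^{-2k\zeta}t_j^2$, and the rate is as large as you like once $k$ is large. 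Only this upper bound is needed: the relative transition density stays bounded as the starting boundary length tends to $0$, so your lower cutoff $a$ can be dropped. Your two-step density argument then closes the proof, provided you run it separately on even and odd indices so the conditioning is sequential, fixing $K$ first and choosing $p$ afterwards.

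Finally, Hoeffding's inequality cannot give an arbitrary rate $\alpha$ in the last step. Use instead $\BP[\mathrm{Bin}(m,\delta)\ge\ell]\le\binom m\ell\delta^\ell$ with $\delta$ small; this is why $p$ must be taken close to $1$.
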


\begin{proof}
    See \cite[Remark~3.12]{2026arXiv260324473M}. 
\end{proof}

\begin{lemma}\label{lem:independence-across-scales}
    For each $\lambda \in (0, 1)$, $\alpha > 0$, and $b \in (0, 1)$, there exists $p = p(\lambda, \alpha, b) \in (0, 1)$ and $C = C(\lambda, \alpha, b) > 0$ such that the following is true: Let $\Phi$ be a whole-plane GFF. Let $\{t_j\}_{j \in \BN}$ be a decreasing sequence of positive real numbers such that $t_{j + 1}/t_j \le \lambda$ for every $j \in \BN$. Let $\{E_j\}_{j \in \BN}$ be a sequence of events such that each $E_j$ is almost surely determined by $\Phi|_{A_{t_{j + 1},t_j}(0)}$. Suppose that $\BP\lbrack E_j\rbrack \ge p$ for every $j \in \BN$. Then for each $n \in \BN$, it holds with probability at least $1 - C\re^{-\alpha n}$ that there are at least $bn$ values of $j \in [1,n]_\BZ$ for which $E_j$ occurs. (Here, both $\Phi$ and $\Phi|_{A_{t_{j + 1},t_j}(0)}$ are viewed as random Schwartz distributions modulo additive constants.)
\end{lemma}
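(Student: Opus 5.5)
We reduce \Cref{lem:independence-across-scales} to a standard near-independence statement for the GFF across concentric annuli, followed by a Chernoff-type bound for a sequence of events that is dominated by i.i.d.\ Bernoulli variables.

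\textbf{Step 1: conditional lower bound.} The key input is the following. For each $q \in (0,1)$ there exists $p \in (0,1)$ such that, whenever $\BP\lbrack E_j\rbrack \ge p$, we have
\begin{equation*}
    \BP\lbrack E_j \mid \Phi|_{\BC \setminus A_{t_{j+1}, t_j}(0)}\rbrack \ge q
\end{equation*}
on an event $G_j$ of probability at least $q$. Moreover, $G_j$ is determined by $\Phi$ restricted to a slightly smaller region outside the annulus.

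To prove this, we work modulo additive constants, which is harmless because $E_j$ depends only on $\Phi|_{A_{t_{j+1},t_j}(0)}$ modulo constants. By scale invariance of the whole-plane GFF we may rescale so that $t_j = 1$. We then apply the domain Markov property. Given the field outside the annulus, the restriction to the annulus is a zero-boundary GFF on that annulus plus a harmonic function $\kH$.

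Rather than condition on the exact boundary, we use a slightly fattened decomposition. Consider the disk $B_{\sqrt{t_j}}$, roughly, and the harmonic extension into it. Outside an event of small probability, the relevant harmonic part restricted to the annulus has bounded Dirichlet energy modulo constants. On that event, the Cameron--Martin theorem gives a Radon--Nikodym derivative with bounded moments. This derivative compares the law of the field on the annulus given the exterior with its law given nothing.

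Because the ratio $t_{j+1}/t_j \le \lambda < 1$ is bounded away from $1$, these bounds are uniform in $j$. From them we obtain $\BP\lbrack E_j^c \mid \cdot\rbrack \le C(\BP\lbrack E_j^c\rbrack)^{1/2}$ by Cauchy--Schwarz, which becomes small as $p \to 1$.

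\textbf{Step 2: handling the geometry.} A subtlety is that the annuli $A_{t_{j+1},t_j}(0)$ are adjacent, so they touch. The Markov decomposition with respect to the exterior of the closed annulus still works, since it only requires the field on the complement. However, the harmonic extension can be wild near the common boundary circles.

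To avoid this, we first split the indices $j$ into even and odd classes and treat each class separately. We then fix a $j$ in the chosen class and use the harmonic extension from the exterior of the union of the annuli $j-1$, $j$ and $j+1$. This leaves a buffer, so that the harmonic function is smooth on annulus $j$ with Dirichlet energy controlled by a tail of Gaussian type.

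Iterating over the indices in the chosen class, from the outermost annulus inward, we obtain the following. Conditional on $\CF_{j-2}$, the $\sigma$-algebra generated by the field outside $B_{t_{j-1}}$ together with the earlier annuli, the event $E_j \cap G_j$ has probability at least $1-\delta$, where $\delta \to 0$ as $p \to 1$.

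\textbf{Step 3: counting.} Given Step 2, the indicators $1_{E_j}$ along each parity class stochastically dominate i.i.d.\ Bernoulli$(1-\delta)$ variables. A Chernoff bound then shows that at least a $(1+b)/2$ fraction of each class occurs, with probability at least $1 - C\re^{-\alpha n}$, provided $\delta$ is small in terms of $\alpha$ and $b$. The total count is therefore at least $bn$ after adjusting constants.

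The main obstacle is Step 1/2: obtaining a Radon--Nikodym bound that is uniform in scale, despite the failure of exact independence and the additive-constant ambiguity. The uniformity in scale should come from the scale invariance of the GFF together with the fixed ratio bound $\lambda$.
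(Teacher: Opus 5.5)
The paper gives no argument of its own here; it simply cites \cite[Lemma~3.1]{LocMetGFF}. Your ingredients are the standard ones: the Markov property at $\partial B_{t_{j-1}}(0)$ with $A_{t_j,t_{j-1}}(0)$ as a buffer, a Cameron--Martin comparison, a parity split to handle the fact that consecutive annuli share a boundary circle, and a Chernoff bound. However, there is a genuine gap between Step~2 and Step~3.

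The Markov/Cameron--Martin argument only gives $\BP[E_j^c \mid \mathcal F_{j-2}] \le \delta$ \emph{on} a good event $G_j$. Take $G_j$ to be the event that the harmonic extension $\kH_j$ of $\Phi|_{\BC\setminus B_{t_{j-1}}(0)}$ satisfies $\sup_{B_{\lambda' t_{j-1}}(0)}|\kH_j-\kH_j(0)|\le M$, for a fixed $\lambda'\in(\lambda,1)$. This $G_j$ is $\mathcal F_{j-2}$-measurable. So your claim that ``conditional on $\mathcal F_{j-2}$, $E_j\cap G_j$ has probability at least $1-\delta$'' cannot hold, since that conditional probability is $0$ on $G_j^c$. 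Moreover, on $G_j^c$ you have no lower bound on $\BP[E_j\mid\mathcal F_{j-2}]$ at all: a large harmonic tilt can make $E_j$ conditionally very unlikely. Hence the indicators $1_{E_j}$ are not dominated by i.i.d.\ Bernoulli$(1-\delta)$ variables, and Step~3 does not follow.

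What is missing is a separate estimate: for each $\epsilon,\alpha>0$ there is $M$ such that, with probability at least $1-C\re^{-\alpha n}$, $G_j$ fails for at most $\epsilon n$ indices $j\in[1,n]_\BZ$. This does not follow from the Gaussian tail of each individual $\kH_j$. Each $G_j$ depends on the field at every scale larger than $t_{j-1}$, so the events $G_j$ are correlated across $j$. This long-range dependence is the real obstacle, not uniformity in scale, which is immediate from scale invariance.

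One standard way to get the estimate is to iterate the Markov property. Write $\kH_j-\kH_j(0)$ on $B_{\lambda' t_{j-1}}(0)$ as a sum of independent contributions from the zero-boundary GFFs on the successive annuli between $t_{j-1}$ and $t_1$, plus one term from outside $B_{t_1}(0)$. By harmonic gradient estimates, the contribution from the $i$-th annulus is damped by a factor decaying geometrically in $j-i$. Then bound the number of bad $j$ using exponential moments of these geometrically weighted sums of independent Gaussian-tailed variables. With this in hand, split
$\#\{j:E_j^c\}\le\#\{j:G_j^c\}+\#\{j:G_j\cap E_j^c\}$
and apply your supermartingale/Chernoff argument only to the second term.

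A smaller point: conditioning on the exterior of the union of annuli $j-1,j,j+1$ puts $\Phi|_{B_{t_{j+2}}(0)}$, i.e.\ later scales, into the conditioning. That is incompatible with your inward filtration. The disk version with $\BC\setminus B_{t_{j-1}}(0)$, which you use afterwards, is the one that works.
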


\begin{proof}
    See \cite[Lemma~3.1]{LocMetGFF}. 
\end{proof}

\begin{lemma}\label{lem:independence-across-balls}
    For each $p, q \in (0, 1)$ and $s > 0$, there exists $n = n(p, q, s) \in \BN$ such that the following is true: Let $\Phi$ be a whole-plane GFF. Let $r > 0$. Let $z_1, \dots, z_n \in \BC$ such that $\lVert z_j - z_k\rVert \ge 2(1 + s)r$ for all distinct $j, k \in [1, n]_\BZ$. Let $E_1, \dots, E_n$ be a sequence of events such that each $E_j$ is almost surely determined by $\Phi|_{B_r(z_j)}$. Suppose that $\BP\lbrack E_j\rbrack \ge p$ for every $j \in [1, n]_\BZ$. Then $\BP\lbrack E_1 \cup \cdots \cup E_n\rbrack \ge q$. (Here, both $\Phi$ and $\Phi|_{B_r(z_j)}$ are viewed as random Schwartz distributions modulo additive constants.)
\end{lemma}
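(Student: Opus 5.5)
By the scaling and translation invariance of the whole-plane GFF modulo additive constants, I may take $r = 1$. The plan is to show that, with all events viewed modulo additive constants, the restrictions $\Phi|_{B_1(z_j)}$ are close to independent once the balls are far apart. This needs a quantitative total-variation comparison, using the domain Markov property and the separation $\lVert z_j - z_k\rVert \ge 2(1+s)$.

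First I fix an enlarged radius $1 + s/2$; the balls $B_{1+s/2}(z_j)$ are then pairwise disjoint. For each $j$, I write
\begin{equation*}
    \Phi|_{B_{1+s/2}(z_j)} = \kH_j + \Phi_j^0,
\end{equation*}
where $\kH_j$ is the harmonic extension of $\Phi$ from outside the ball and $\Phi_j^0$ is a zero-boundary GFF on $B_{1+s/2}(z_j)$. The zero-boundary parts are mutually independent across $j$, since the balls are disjoint; this follows by iterating the Markov property on the disjoint union. They are also jointly independent of the collection $\{\kH_j\}$.

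Modulo additive constants, $\kH_j|_{B_1(z_j)}$ is a harmonic function whose oscillation on $B_1(z_j)$ is controlled by its gradient. The key comparison is the following: if $h$ is a harmonic function on $B_{1+s/2}$ with $\sup_{B_{1+s/4}} \lVert\nabla h\rVert \le M$, then the law of $(\Phi^0 + h)|_{B_1}$ modulo constants is mutually absolutely continuous with respect to that of $\Phi^0|_{B_1}$. To see this, I multiply $h$ by a smooth cutoff to obtain a function in $H_0^1(B_{1+s/2})$ agreeing with $h$ on $B_1$, whose Dirichlet norm is bounded by $C(s)M^2$. Cameron--Martin then gives a Radon--Nikodym derivative with second moment at most $\exp(C(s)M^2)$. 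Hence each event $E_j$, which has probability at least $p$ under the true field, has conditional probability at least some $p' = p'(p, s, M) > 0$ given $\{\kH_k\}$, on the event $G_j = \{\sup_{B_{1+s/4}(z_j)} \lVert\nabla\kH_j\rVert \le M\}$. Here I use the second-moment bound and Cauchy--Schwarz in the reverse direction: with $R$ the Radon--Nikodym derivative, $p \le \BP[\text{event under } \Phi^0] \le \ldots$ is handled by applying the bound to the complement and using the symmetric version. Absent the conditioning, $\BP[G_j]$ is close to $1$ for $M$ large, uniformly in $j$ and in the configuration, because $\nabla\kH_j$ on $B_{1+s/4}(z_j)$ has a law that is translation invariant and is a.s.\ finite.

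The main obstacle is that the good events $G_j$ are correlated, so I cannot simply multiply their probabilities. I would handle this with a Markov/averaging argument. Let $N$ be the number of $j$ for which $G_j$ fails. Then $\BE[N] \le n\delta(M)$, where $\delta(M) \to 0$ as $M \to \infty$, so with probability at least $1 - 2\delta(M)$ at least $n/2$ indices are good.

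Conditional on $\{\kH_k\}$, the events $E_j$ are functions of the independent $\Phi_j^0$ together with the now-fixed $\kH_j$, so they are conditionally independent. This gives
\begin{equation*}
    \BP[\text{no } E_j \text{ occurs}] \le 2\delta(M) + (1 - p')^{n/2}.
\end{equation*}
To conclude, I choose $M$ so that $2\delta(M) < (1-q)/2$, and then choose $n$ so that $(1 - p')^{n/2} < (1-q)/2$. The one delicate point is confirming that $p'$ depends only on $(p, s, M)$ and not on $n$ or on the positions $z_j$; this follows from the translation invariance of the construction.
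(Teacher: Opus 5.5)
The paper gives no argument of its own for this lemma; it simply cites \cite[Lemma~2.7]{ExUniLQG}. Your overall scheme is the standard one for such near-independence statements, and most of it works as you describe. That includes the Markov decomposition on the disjoint balls $B_{1+s/2}(z_j)$ and the conditional independence of the $E_j$ given the harmonic parts. It also includes the Cameron--Martin comparison, after normalizing $\kH_j(z_j)=0$ before cutting off, which is harmless modulo constants. Finally, the Markov-inequality count of bad indices is fine.

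\textbf{The gap is in the derivation of $p'$.} Write $E_j=\{\Phi|_{B_1(z_j)}\in S_j\}$ and set $\pi_j(h)\defeq\BP[(\Phi_j^0+h)|_{B_1(z_j)}\in S_j]$. The hypothesis $\BP[E_j]\ge p$ only gives $\BE[\pi_j(\kH_j)]\ge p$, which is an average over $\kH_j$. It does not give $\pi_j(0)\ge p$, as your chain ``$p\le\BP[\text{event under }\Phi^0]$'' asserts. Nor does it give a lower bound on $\pi_j(h)$ for any fixed good $h$. You must convert the averaged bound into a pointwise one, using tightness of $\nabla\kH_j$.

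\textbf{The fix.} Your density bound and Cauchy--Schwarz, applied in both directions, give the following for harmonic $h$ with $\sup_{B_{1+s/4}(z_j)}\lVert\nabla h\rVert\le K$:
\[
\pi_j(h)\le \exp(C(s)K^2/2)\,\pi_j(0)^{1/2},\qquad \pi_j(0)\le \exp(C(s)K^2/2)\,\pi_j(h)^{1/2}.
\]
\begin{enumerate}
\item Choose an auxiliary $K=K(p,s)$ with $\delta(K)\le p/2$.
\item Split $\BE[\pi_j(\kH_j)]$ according to whether $\sup_{B_{1+s/4}(z_j)}\lVert\nabla\kH_j\rVert\le K$, and apply the first inequality on the good part. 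This gives $p\le \delta(K)+\exp(C(s)K^2/2)\,\pi_j(0)^{1/2}$, hence $\pi_j(0)\ge \exp(-C(s)K^2)\,p^2/4$.
\item On $G_j$, the second inequality with $M$ in place of $K$ gives $\pi_j(\kH_j)\ge \exp(-C(s)M^2)\,\pi_j(0)^2\ge \exp(-C(s)M^2-2C(s)K^2)\,p^4/16$.
\end{enumerate}
The last quantity is a constant $p'(p,s,M)>0$. It is uniform in $j$ and in the configuration, because the law of $\nabla\kH_j$ on $B_{1+s/4}(z_j)$ does not depend on $z_j$. With this step inserted, your final choice of $M=M(q,s)$ and then of $n$ goes through unchanged.
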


\begin{proof}
    See \cite[Lemma~2.7]{ExUniLQG}. 
\end{proof}

\subsection{Quasiconformal mappings}\label{subsection:background-quasiconformal}

In the present subsection, we review several equivalent definitions of quasiconformal mappings and recall some basic properties; see, e.g., \cite{MR2241787,MR344463,MR454009} for more details.

Let $U \subset \BC$ be a planar domain and let $f \colon U \to f(U) \subset \BC$ be a homeomorphism. We say that $f$ is \emph{($K$-)quasiconformal} if there is a constant $K \ge 1$ such that any, and hence all, of the following equivalent conditions are satisfied:
\begin{enumerate}[label=(\Roman*),ref=\Roman*]
    \item\label{it:def-quasiconformal-1} $f \in W_{\mathrm{loc}}^{1,2}(U, \BC)$, and $\lVert\overline\partial f(z)\rVert \le \left(\frac{K - 1}{K + 1}\right)\lVert\partial f(z)\rVert$ for almost every $z \in U$.
    \item\label{it:def-quasiconformal-2} There is a constant $H \ge 1$ such that
    \begin{equation*}
        \limsup_{r \to 0} \frac{\sup\{\lVert f(x) - f(y)\rVert : y \in \partial B_r(x)\}}{\inf\{\lVert f(x) - f(y)\rVert : y \in \partial B_r(x)\}} \le H, \quad \forall x \in U. 
    \end{equation*}
    \item\label{it:def-quasiconformal-3} There is a constant $H \geq 1$ such that
    \begin{equation*}
        \liminf_{r \to 0} \frac{\sup\{\lVert f(x) - f(y)\rVert : y \in \partial B_r(x)\}}{\inf\{\lVert f(x) - f(y)\rVert : y \in \partial B_r(x)\}} \le H, \quad \forall x \in U. 
    \end{equation*}
    \item\label{it:def-quasiconformal-4} We have $K^{-1}\Mod(\Gamma) \le \Mod(f(\Gamma)) \le K\Mod(\Gamma)$ for every family $\Gamma$ of paths in $U$, where $\Mod(\Gamma)$ denotes the conformal modulus of $\Gamma$.
    \item\label{it:def-quasiconformal-5} We have $K^{-1}\Mod(Q) \le \Mod(f(Q)) \le K\Mod(Q)$ for every quadrilateral $Q \subset U$, where $\Mod(Q)$ denotes the conformal modulus of $Q$. (Recall that a quadrilateral is a Jordan domain with four marked boundary points.)
\end{enumerate}
Here, the constants $H$ and $K$ depend only on each other. The equivalence of conditions~\eqref{it:def-quasiconformal-1},~\eqref{it:def-quasiconformal-2},~\eqref{it:def-quasiconformal-4}, and~\eqref{it:def-quasiconformal-5} is a deep result due to the work of Gehring, V\"ais\"al\"a, and many others in the 1950s and early 1960s. The equivalence of condition~\eqref{it:def-quasiconformal-3} with the other conditions, which is particularly important for the purposes of the present paper, follows from \cite{MR1323982,MR1771571}. Condition~\eqref{it:def-quasiconformal-1} is usually referred to as the \emph{analytic definition} of quasiconformal mappings; conditions~\eqref{it:def-quasiconformal-2} and~\eqref{it:def-quasiconformal-3} are referred to as the \emph{metric definitions}; and conditions~\eqref{it:def-quasiconformal-4} and~\eqref{it:def-quasiconformal-5} are referred to as the \emph{geometric definitions}. 

Note that $K$-quasiconformality is a local property: If $f$ is $K$-quasiconformal in an open neighborhood of each point of $U$, then $f$ is $K$-quasiconformal in $U$.

The inverse of a $K$-quasiconformal mapping is also $K$-quasiconformal. The composition of a $K_1$-quasiconformal mapping and a $K_2$-quasiconformal mapping is $K_1K_2$-quasiconformal. A mapping is conformal if and only if it is $1$-quasiconformal. 

The \emph{measurable Riemann mapping theorem}, proved in \cite{MR115006}, says that a converse to condition~\eqref{it:def-quasiconformal-1} holds: For each $\mu \in L^\infty(U, \BC)$ with $\lVert\mu\rVert_\infty < 1$, there exists a quasiconformal mapping $f \colon U \to f(U)$ satisfying the Beltrami equation $\overline\partial f = \mu \, \partial f$.

There is no quasiconformal mapping from $\BC$ onto a proper subdomain $U \subsetneq \BC$. Moreover, a mapping from $\BC$ to $\BC$ is quasiconformal if and only if it is quasisymmetric with respect to the Euclidean metric.

\section{Brownian quasisymmetry implies Euclidean quasiconformality}
\label{section:quasiconformality}

In the present section, we prove the following intermediate result, which will be used in the proof of the main results.

We call a mapping a \emph{quasisymmetric embedding} if it is a quasisymmetric homeomorphism onto its image.

\begin{proposition}\label{lem:quasiconformality}
    For each distortion function $\eta$, there exists $K = K(\eta) \ge 1$ such that the following is true: Let $(\BC, \Phi_1; 0, \infty)$ and $(\BC, \Phi_2; 0, \infty)$ be a pair of coupled (not necessarily independent) $\sqrt{8/3}$-LQG cones. Then, almost surely, every $\eta$-quasisymmetric embedding of $(B_1^\bullet(0; D_{\Phi_1}), D_{\Phi_1})$ into $(\BC, D_{\Phi_2})$ is $K$-quasiconformal.
\end{proposition}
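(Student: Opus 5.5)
We will verify the metric definition~\eqref{it:def-quasiconformal-3}: we need a constant $H=H(\eta)$ such that at \emph{every} point $x$ of the interior of $B_1^\bullet(0;D_{\Phi_1})$, some sequence of radii $r\to0$ makes the Euclidean image of $\partial B_r(x)$ under $f$ nearly round, with ratio of sup to inf at most $H$. Only a liminf is needed, and this is the crucial point. The limsup definition~\eqref{it:def-quasiconformal-2} would require control at all scales simultaneously, which the LQG metric cannot give.

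The bridge between Euclidean and LQG geometry is a ``good annulus'' event. For $z\in\BC$ and $r>0$, let $G_{z,r}(\Phi)$ be the event that the LQG geometry of $\Phi$ on $A_{r,4r}(z)$ is comparable to a round annulus, in the following sense. There is a constant $M$ such that every path crossing $A_{r,2r}(z)$ has $D_\Phi$-length at least $M^{-1}$ times the $D_\Phi$-diameter of $B_{2r}(z)$ (the relevant quantity being taken modulo the additive constant, i.e.\ after normalizing). There is also a loop in $A_{2r,3r}(z)$ of $D_\Phi$-length at most $M$ times that same scale. Finally, the analogous comparison holds with $r$ and $2r$ replaced by $2r$ and $4r$.

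These conditions are invariant under adding constants, by Weyl scaling, and they depend only on $\Phi|_{A_{r/2,8r}(z)}$, by locality. By translation and scale invariance of the whole-plane GFF modulo constants, together with the coordinate change property, $\BP[G_{z,r}]$ does not depend on $z$ or $r$. It can be made at least any prescribed $p<1$ by taking $M$ large, since the LQG metric is a.s.\ a continuous metric inducing the Euclidean topology.

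Next we apply \Cref{lem:independence-across-scales} with a dyadic sequence of scales $t_j=8^{-j}$. This shows that for fixed $z$, with probability $1-C\re^{-\alpha n}$ at least $bn$ of the first $n$ scales are good simultaneously for $\Phi_1$ at $z$ and for $\Phi_2$ at the relevant image point. The coupling is arbitrary, so we take a union bound over the two fields. Because the result must hold at every point of $B_1^\bullet$, including $f^{-1}$ of the image points, we take a union bound over a grid of $e^{O(n)}$ points at spacing $8^{-n}$ in a bounded region. Choosing $\alpha$ large makes the probabilities summable, and Borel--Cantelli gives an a.s.\ conclusion. Every $x$, and every $y$ with $f(x)=y$, then has infinitely many good scales for $\Phi_1$ near $x$ and for $\Phi_2$ near $y$.

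The delicate matching problem is that the good scales at $x$ and at $f(x)$ are unrelated. We handle it by never requiring them to match directly. Take a good Euclidean annulus $A_{r,4r}(x)$ for $\Phi_1$ (after passing to a grid point within distance $r/100$). The good-annulus conditions make $\partial B_r(x)$ lie between two $D_{\Phi_1}$-metric spheres $\partial B_s(x;D_{\Phi_1})$ and $\partial B_{Ms}(x;D_{\Phi_1})$. By $\eta$-quasisymmetry of $f$ with respect to $D_{\Phi_1}$ and $D_{\Phi_2}$, the image $f(\partial B_r(x))$ lies in a $D_{\Phi_2}$-metric annulus around $f(x)$ of bounded ratio $\eta(M)$-type. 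We must then convert this $D_{\Phi_2}$-annulus back to Euclidean size around $f(x)$.

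The main obstacle is this conversion, since a $D_{\Phi_2}$-annulus of bounded ratio need not be comparable to a Euclidean annulus. Our first attempt is to show that a positive fraction of the $\Phi_1$-good scales at $x$ are mapped into ranges containing $\Phi_2$-good Euclidean scales at $f(x)$. The plan for this is a counting argument. Good scales have density $b$ close to $1$ for both fields. Quasisymmetry maps nested $D_{\Phi_1}$-balls at geometric ratio to nested $D_{\Phi_2}$-balls at ratios bounded above and below. Hence, among $n$ consecutive scales, a pigeonhole argument yields a scale $r$ at which $f(\partial B_r(x))$ is sandwiched between $\partial B_\rho(f(x))$ and $\partial B_{c\rho}(f(x))$ for a $\Phi_2$-good $\rho$. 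Here $c$ depends only on $\eta$ and $M$, which gives~\eqref{it:def-quasiconformal-3} with $H=H(\eta)$. The weak point is calibrating the density $b$ against the multiplicative scale distortion, which is bounded by $\eta$ through the uniform perfectness of the metric. This is where we would expect to spend the most effort, possibly replacing Euclidean good annuli by metric bands and using \Cref{lem:independence-across-bands} instead.
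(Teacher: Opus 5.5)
\textbf{There is a genuine gap, at exactly the step you flag as the ``weak point''. It is a structural problem, not a matter of calibrating $b$.}

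Quasisymmetry only relates $D_{\Phi_1}$-scales at $x$ to $D_{\Phi_2}$-scales at $y=f(x)$, with bounded distortion in log-scale as in \Cref{rem:quasisymmetric}. Your good scales, however, have density $b$ in \emph{Euclidean} log-scale at $x$ and at $y$. No local event controls the passage from Euclidean to metric scale at a point. Write $\sigma(\rho)$ for $D_{\Phi_2}(y,\partial B_\rho(y))$. This quantity depends on $\Phi_2$ at all scales inside $B_\rho(y)$, so no event determined by $\Phi_2|_{A_{\rho/2,8\rho}(y)}$ can force $\sigma(4\rho)\ge(1+c)\sigma(\rho)$. Heuristically, $\log\sigma(\rho)$ tracks the running maximum over $\rho'\le\rho$ of $\xi(h_{\rho'}(y)+Q\log\rho')$, where $h_{\rho'}(y)$ is the circle average, and this running maximum can stay essentially flat over many consecutive Euclidean scales.

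This has three consequences.
\begin{itemize}
\item A bounded number of consecutive good Euclidean annuli at $y$ can span an arbitrarily small range of $D_{\Phi_2}$-distances.
\item Conversely, a $D_{\Phi_2}$-band of bounded ratio can stretch over many Euclidean scales around $y$, i.e.\ it can have huge conformal modulus. If $\sigma(\rho_1)\approx\sigma(\rho_2)$ with $\rho_2/\rho_1$ large, then $B_{\sigma(\rho_1)}(y;D_{\Phi_2})\subset B_{\rho_1}(y)$, while a slightly larger metric ball contains $B_{\rho_2}(y)$. In that situation $f(\partial B_r(x))$, which you only know lies in a $D_{\Phi_2}$-annulus of ratio $\eta(M')$, need not lie in any Euclidean annulus of bounded ratio, however many of those Euclidean scales are good.
\item A density-$b$ set of Euclidean scales, at $x$ or at $y$, may occupy a negligible fraction of metric log-scale. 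So there is no common parameterization in which your pigeonhole argument can run.
\end{itemize}

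\textbf{The missing idea is never to use Euclidean scales as the index set.} Work with filled metric bands $A^\bullet_{\lambda t,t}$ at both $x$ and $y$. Then the scale correspondence $\iota$ has distortion depending only on $\eta$. Read off Euclidean geometry from the bands' \emph{conformal moduli} via \Cref{lem:Teichmueller}:
\begin{itemize}
\item A band at $x$ with modulus $>5\log(2)/(2\pi)$ contains a Euclidean circle centred at $x$.
\item Four consecutive bands at $y$ with moduli in $[M^{-1},M]$ squeeze $A^\bullet_{\lambda_2^2\iota(s),\iota(s)}(y;D_{\Phi_2})$ into a Euclidean annulus of ratio depending only on $M$. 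The upper bound on the modulus rules out exactly the Euclidean-fat thin bands described above.
\end{itemize}
The probabilistic inputs are then all statements in metric log-scale:
\begin{itemize}
\item modulus tail bounds for bands via the Brownian disk (\Cref{lem:moment});
\item independence across metric bands from the Markov property of the filled-ball exploration (\Cref{lem:independence-across-bands,lem:independence-across-bands-moduli});
\item uniformity over all points via a net of LQG-typical points, each of which sees a cone (\Cref{lem:net,lem:quasiconformality-proof}).
\end{itemize}
One then takes $\lambda_1$ small, $\lambda_2=1/\eta(1/\lambda_1)$, and $b=b(\eta,\lambda_1,\lambda_2)$ close to $1$, and pigeonholes in metric log-scale. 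Your closing remark points toward metric bands, but without the conformal-modulus link there is no way to extract Euclidean roundness from them.

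Two further issues are secondary and fixable. Your good-annulus event is not local as written, because the $D_\Phi$-diameter of $B_{2r}(z)$ depends on $\Phi$ near $z$. Also, the cone fields are not whole-plane GFFs near $0\in B_1^\bullet(0;D_{\Phi_1})$, so \Cref{lem:independence-across-scales} and translation invariance do not apply verbatim.
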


The idea of the proof of \Cref{lem:quasiconformality} is as follows. First, by choosing a large ratio for the metric bands of the first $\sqrt{8/3}$-LQG surface and applying independence across metric bands, we may arrange that, with very high probability, the metric bands at a large portion of scales have conformal moduli bounded below by a prescribed constant; see \Cref{lem:independence-across-bands-moduli}. We then choose this prescribed constant sufficiently large so that whenever a metric band has conformal modulus at least this constant, it must contain a Euclidean circle centered at its center; see \Cref{lem:Teichmueller}.

Next, we choose an even larger ratio for the metric bands of the second $\sqrt{8/3}$-LQG surface, in a manner depending on the distortion function, so that the image of each metric band of the first $\sqrt{8/3}$-LQG surface with the ratio discussed above is contained in a metric band of the second $\sqrt{8/3}$-LQG surface with this larger ratio. Applying independence across metric bands again (cf.~\Cref{lem:independence-across-bands}), we may arrange that, with very high probability, a large portion of these metric bands of the second $\sqrt{8/3}$-LQG surface have conformal moduli bounded above and below. Combining this with a general lemma on the conformal moduli of doubly connected domains (cf.~\Cref{lem:Teichmueller}), we can then conclude that there exist arbitrarily small Euclidean circles as above whose images are contained in Euclidean annuli with bounded ratio. By condition~\eqref{it:def-quasiconformal-3}, this implies that the mapping is quasiconformal.

\subsection{Conformal moduli of metric bands}

In the present subsection, we record several estimates concerning the conformal moduli of metric bands of a $\sqrt{8/3}$-LQG cone. These results are essentially contained in the proofs in \cite{2026arXiv260324473M}.

\begin{proposition}\label{lem:moment}
    \begin{enumerate}
        \item\label{it:moment-0} Let $(\BC, \Phi; 0, \infty)$ be a $\sqrt{8/3}$-LQG cone. Then for each $p > 0$, 
        \begin{equation*}
            \BE\!\left\lbrack\re^{-2\pi p\Mod(A_{\varepsilon,1}^\bullet(0; D_\Phi))}\right\rbrack = O(\varepsilon^q) \quad \text{as } \varepsilon \to 0, \quad \forall q \in (0, \sqrt{1 + 12p} - 1). 
        \end{equation*}
        (Here, $\sqrt{1 + 12p} - 1$ comes from $(\sqrt{(Q - \gamma)^2 + 2p} - (Q - \gamma))/\xi$ when $\gamma = \sqrt{8/3}$, using the notation of \Cref{subsection:background-LQG}.) In particular, for each $M > 0$, it holds with superpolynomially high probability as $\varepsilon \to 0$ that $\Mod(A_{\varepsilon,1}^\bullet(0; D_\Phi)) \ge M$. 
        \item\label{it:moment-1} Let $(\SCD, D_\SCD, \SM_\SCD)$ be a Brownian disk with unit boundary length and infinite area. Then for each $p > 0$, 
        \begin{equation*}
            \BE\!\left\lbrack\re^{-2\pi p\Mod(B_T^\bullet(\partial\SCD; D_\SCD))}\right\rbrack = O(T^{-q}) \quad \text{as } T \to \infty, \quad \forall q \in (0, \sqrt{1 + 12p} - 1). 
        \end{equation*}
        In particular, for each $M > 0$, it holds with superpolynomially high probability as $T \to \infty$ that $\Mod(B_T^\bullet(\partial\SCD; D_\SCD)) \ge M$. 
    \end{enumerate}
\end{proposition}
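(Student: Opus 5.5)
Part~\eqref{it:moment-0} reduces to a one-dimensional radial estimate plus a bound on how badly the metric band can fail to look like a Euclidean annulus. In the circle average embedding, the radial projection of $\Phi$ is $A_{-\log|\bullet|}$. For $t \ge 0$ this is Brownian motion with drift $\gamma$ as we move inward, since $A_t = B_t + \gamma t$. Weyl scaling (property (III)) then says that shrinking the filled ball from $D_\Phi$-radius $1$ to radius $\varepsilon$ corresponds, heuristically, to the Euclidean log-radius moving by an amount $\tau$ for which $\xi(A_\tau - \gamma\tau) + \xi Q\tau \approx \log(1/\varepsilon)$. The scaling here comes from the coordinate change (IV) for $z \mapsto e^{-\tau} z$.

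The conformal modulus of $A^\bullet_{\varepsilon,1}(0;D_\Phi)$ should be comparable to $\tau/(2\pi)$, up to an error with good tails. The reason is that if $\partial B_1^\bullet$ and $\partial B_\varepsilon^\bullet$ are trapped between Euclidean circles of radii $r_1 \ge r_1'$ and $r_\varepsilon \ge r_\varepsilon'$, then monotonicity of modulus gives
\begin{equation*}
\Mod(A^\bullet_{\varepsilon,1}) \ge \frac{1}{2\pi}\log\left(r_1'/r_\varepsilon\right).
\end{equation*}
The quantity $\re^{-2\pi p \Mod}$ is thus dominated by $(r_\varepsilon/r_1')^p$. So I would bound $\BE[(r_\varepsilon/r_1')^p]$.

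The main step is an exponential-moment computation for Brownian motion with drift stopped at a linear level. Let $\sigma$ be the hitting time of $\log(1/\varepsilon)/\xi$ by $B_t + (Q-\gamma)t$ after the coordinate change; with $Q - \gamma = 1/\sqrt6$ when $\gamma=\sqrt{8/3}$. Then
\begin{equation*}
\BE[\re^{-p\sigma}] = \exp\left(-\frac{\log(1/\varepsilon)}{\xi}\left(\sqrt{(Q-\gamma)^2+2p}-(Q-\gamma)\right)\right).
\end{equation*}
This is exactly $\varepsilon^{\sqrt{1+12p}-1}$ with $\xi = 1/\sqrt6$, which matches the exponent in the statement.

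The remaining work, which I expect to be the main obstacle, is to control the discrepancy between the heuristic radii and the true boundaries of the filled balls. This comes from the non-radial part of the field and from the fluctuation of the circle average near the stopping time. I would handle it with standard LQG metric estimates on Euclidean annuli, as in \cite{2026arXiv260324473M}:
\begin{itemize}
\item $D_\Phi$-distances across, and around, the annulus $A_{r,2r}(0)$ are comparable to $r^{\xi Q}\re^{\xi h_r(0)}$, with superpolynomial or lognormal tails for the ratio.
\item These estimates give $\log r_\varepsilon \le -\sigma + X$ and $\log r_1' \ge -X'$, with errors $X, X'$ having exponential moments of every order.
\end{itemize}
Using H\"older's inequality to separate these error terms from $\re^{-p\sigma}$ costs an arbitrarily small loss in $q$. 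This loss is why the statement holds for every $q < \sqrt{1+12p}-1$ rather than at the endpoint. The superpolynomial statement then follows from Markov's inequality: since $\sqrt{1+12p}-1 \to \infty$ as $p \to \infty$, for each $M$ we get $\BP[\Mod < M] \le \re^{2\pi p M}\BE[\re^{-2\pi p\Mod}] = O(\varepsilon^q)$ with $q$ arbitrarily large.

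For part~\eqref{it:moment-1}, I would transfer from part~\eqref{it:moment-0} using the Markov structure of the Brownian plane. Conditionally on $Y_{-1}$, the complement of $B_1^\bullet$ is a Brownian disk with boundary length $Y_{-1}$ and infinite area. By the scaling rule, a disk with unit boundary length and its band $B_T^\bullet(\partial\SCD)$ corresponds to a disk with boundary length $\ell$ and band width $T\ell^{1/2}$. The band $A^\bullet_{1,1+T}$ is the $T$-neighbourhood of the boundary in that disk, and the modulus is a conformal invariant of the surface. Hence $\BE[\re^{-2\pi p\Mod(A^\bullet_{1,1+T})}\mid Y_{-1}]$ equals the disk quantity evaluated at width $TY_{-1}^{-1/2}$.

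Applying part~\eqref{it:moment-0} after rescaling by $1+T$ bounds the unconditional expectation by $O(T^{-q})$. Restricting to the event $Y_{-1} \in [1/2,2]$, which has positive probability, gives the estimate at a comparable width. The boundary lengths in $[1/2,2]$ are then related to unit length by the scaling rule together with monotonicity of the modulus in $T$.
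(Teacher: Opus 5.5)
Your reduction $\re^{-2\pi p\Mod(A_{\varepsilon,1}^\bullet(0;D_\Phi))}\le (r_\varepsilon/r_1')^p$ is sound; by the other inequality in \Cref{lem:Teichmueller}, it loses only a constant factor when $r_\varepsilon\le r_1'$. The first-passage computation producing $\sqrt{1+12p}-1$ is also sound. So is your transfer from part (1) to part (2) via the Markov property at radius $1$, scaling, monotonicity in $T$ and the event $\{Y_{-1}\in[1/2,2]\}$. The gap is in the error control for part (1): it is not true that $\log r_1'\ge -X'$ with $X'$ having exponential moments of every order.

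In the circle average embedding, the metric scale at radius $\re^{-t}$, $t\ge0$, is $\re^{\xi(B_t-\mu t)}$ with $\mu=Q-\gamma=1/\sqrt6$. (So your first heuristic should read $\xi(\mu\tau-B_\tau)\approx\log(1/\varepsilon)$.) The event that $B_t-\mu t\ge-1$ for all $t\in[0,x]$ and $B_x-\mu x\in[0,1]$ has probability at least $c\,x^{-3/2}\re^{-x/12}$. On this event, the $D_\Phi$-distance across $A_{\re^{-x-1},\re^{-x}}(0)$ is at least $1$ with conditional probability bounded below. This is because, after dividing by $\re^{\xi(B_x-\mu x)}$, that distance depends only on the lateral part of the field and on $\{B_{x+s}-B_x\}_{s\in[0,1]}$. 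When it is at least $1$, we get $B_1^\bullet(0;D_\Phi)\subset\overline{B_{\re^{-x}}(0)}$, so $r_1'\le\re^{-x}$. Hence $\BE[(r_1')^{-\lambda}]=\infty$ for every $\lambda>1/12$. Your H\"older step needs exponential moments of $X'$ of order $pr'>p$, so it breaks down for every $p\ge1/12$. In particular, the superpolynomial statement, which requires arbitrarily large $p$, is not reached.

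What H\"older discards is a correlation: on the event above, $\sigma$ is delayed by the same amount, so $r_\varepsilon/r_1'$ is unaffected. Moving the reference point for $r_1'$ from $t=0$ to an inward last-exit time, while keeping your inward first hitting time $\sigma$ for $r_\varepsilon$, does not work either. After $\sigma$ the scale process returns to level $1$ with probability $\varepsilon^{2\mu/\xi}=\varepsilon^2$, so such a bound cannot beat $O(\varepsilon^2)$.

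The two radii must be measured consistently. Up to an error local to one scale, both are determined by where the increasing function $R\mapsto D_\Phi(0,\partial B_R(0))$ crosses $\varepsilon$ and $1$. This function is adapted to $\sigma(\Phi|_{B_R(0)})$, so both crossings are stopping times for the \emph{outward} exploration. In that direction, by Williams' path decomposition, $B_t-\mu t$ is a Brownian motion with drift $+\mu$ until it first reaches $0$, independent of the lateral field. By the strong Markov property, the log-radius elapsed between the two crossings then dominates, up to errors, a first passage time of Brownian motion with drift $\mu$ over distance $\log(1/\varepsilon)/\xi$. Its Laplace transform is exactly the $\varepsilon^{\sqrt{1+12p}-1}$ you computed. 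The leftover errors are local to the random scales involved. A union bound over scales, at only polylogarithmic cost, gives them lognormal tails. Only then can H\"older absorb them at an arbitrarily small loss in $q$.
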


\begin{proof}
    This follows immediately from a similar argument to the argument applied in the proof of \cite[Proposition~3.6 and Proposition~3.7]{2026arXiv260324473M}. 
\end{proof}

\begin{proposition}\label{lem:independence-across-bands-moduli}
    For each $\alpha > 0$, $b \in (0, 1)$, and $M > 0$, there exists $\lambda_\ast = \lambda_\ast(\alpha, b, M) \in (0, 1)$ such that for each $\lambda \in (0, \lambda_\ast]$, the following is true: Let $(\BC, \Phi; 0, \infty)$ be a $\sqrt{8/3}$-LQG cone. Let $\{t_j\}_{j \in \BN}$ be a decreasing sequence of positive real numbers such that $t_{j + 1}/t_j \le \lambda$ for every $j \in \BN$. Then for each $n \in \BN$, it holds with probability at least $1 - \lambda^{\alpha n}$ that there are at least $bn$ values of $j \in [1, n]_\BZ$ for which $\Mod(A_{\lambda t_j,t_j}^\bullet(0; D_\Phi)) \ge M$. 
\end{proposition}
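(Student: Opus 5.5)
The plan is to combine \Cref{lem:moment}\eqref{it:moment-0}, applied with a small ratio $\lambda$, with the independence across metric bands in \Cref{lem:independence-across-bands}. The quantifier order matters: $\alpha$, $b$ and $M$ are given, and $\lambda$ must be chosen afterwards. This forces us to control the probability parameter $p$ appearing in \Cref{lem:independence-across-bands} uniformly as $\lambda$ shrinks.

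\textbf{Step 1 (one-band estimate).} By the scaling invariance of the $\sqrt{8/3}$-LQG cone, the law of $\Mod(A^\bullet_{\lambda t,t}(0;D_\Phi))$ does not depend on $t$. Indeed, rescaling $D_\Phi$ by $1/t$ gives another cone, and the conformal modulus is invariant under the change of embedding. So by \Cref{lem:moment}\eqref{it:moment-0}, with $p=1$ and $q\in(0,\sqrt{13}-1)$, and Markov's inequality,
\begin{equation*}
\BP\bigl\lbrack \Mod(A^\bullet_{\lambda t_j,t_j}(0;D_\Phi)) < M\bigr\rbrack \le e^{2\pi M}\,\BE\bigl\lbrack e^{-2\pi \Mod}\bigr\rbrack \le C e^{2\pi M}\lambda^{q}.
\end{equation*}
Taking $p$ large, the bound becomes $C_\beta(M)\lambda^{\beta}$ for any prescribed $\beta$. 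This is the superpolynomial statement in \Cref{lem:moment}\eqref{it:moment-0}.

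\textbf{Step 2 (disjointness of bands).} The band $A^\bullet_{\lambda t_j,t_j}$ is determined by the LQG surface parameterized by $A^\bullet_{t_{j+1},t_j}$, because $t_{j+1}\le\lambda t_j$. Moreover, its conformal modulus is determined by that surface.

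\textbf{Step 3 (independence with $\lambda$-uniform constants).} The main obstacle is that \Cref{lem:independence-across-bands} gives $1-Ce^{-\alpha' n}$ with constants depending on the ratio, whereas we need the bound $\lambda^{\alpha n}$. I would avoid invoking it as a black box for each $\lambda$. Instead I would use the Markov structure underlying it: conditional on the boundary lengths $Y_{-t_j}$, the bands are independent. A band $A^\bullet_{\lambda t_j,t_j}$, given its outer boundary length, is a Brownian-disk-type object. By \Cref{lem:moment}\eqref{it:moment-1} and scaling, its modulus is small with probability at most $C_\beta\lambda^{\beta}$, provided the normalized boundary length $Y_{-t_j}/t_j^2$ lies in a fixed good range.

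So I would define a good event $G_j$: $Y_{-t_j}/t_j^2\in[\delta,\delta^{-1}]$ and the modulus is at least $M$. Conditional on the past, each bad event then has probability at most $\epsilon(\lambda,\delta)$, with $\epsilon\to0$ polynomially fast as $\lambda\to0$, once we also handle the boundary-length regularity. A comparison with Bernoulli variables (a Chernoff bound) then gives
\begin{equation*}
\BP\bigl\lbrack\#\{j\le n : G_j^c\}\ge (1-b)n\bigr\rbrack \le \binom{n}{(1-b)n}\epsilon^{(1-b)n} \le \bigl(2\epsilon^{1-b}\bigr)^{n}.
\end{equation*}
Choosing $\beta$ with $\beta(1-b)>\alpha$ and then $\lambda_\ast$ small makes this at most $\lambda^{\alpha n}$.

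The delicate point is the boundary-length regularity. It is not automatic that $Y_{-t_j}/t_j^2$ is typical for most $j$ with probability $1-\lambda^{\alpha n}$. I would handle it by working conditionally on $Y_{-t_j}$ and using \Cref{lem:moment}\eqref{it:moment-1}, via Brownian disk scaling with $T=\lambda$-dependent ratio $t_j(1-\lambda)/\sqrt{Y_{-t_j}}$. Atypically small $Y$ only helps, since the ratio $T$ is then larger. Atypically large $Y$ has a superpolynomially small tail, because of the CSBP law.
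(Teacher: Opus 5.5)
\textbf{Gap: boundary-length regularity across many scales.} Your skeleton matches the paper's: the forward Markov property of the filled-ball exploration, scaling of the infinite-area Brownian disk, \Cref{lem:moment}\eqref{it:moment-1}, and a union bound over index sets of size about $(1-b)n$. But the step you flag yourself is not resolved by what you propose, and the Bernoulli comparison fails without it.

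\begin{itemize}
\item With a fixed window $Y_{-t_j}/t_j^2\in[\delta,\delta^{-1}]$ inside $G_j$, the failure probability of $G_j$ is at least a positive constant independent of $\lambda$. So no $\epsilon(\lambda,\delta)\to 0$ exists.
\item With a one-sided, $\lambda$-dependent threshold instead, a superpolynomial tail for a single $Y_{-t}/t^2$ only controls each scale marginally. The events ``boundary length atypically large at scale $j$'' are strongly correlated: a boundary length much larger than the square of the next radius typically stays large at the next scale. Hence their conditional probabilities given the past are not uniformly small, and you cannot dominate their count by independent Bernoulli variables to get a $\lambda^{\alpha n}$ bound.
\end{itemize}
What is missing is a genuinely multi-scale estimate. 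The paper uses \Cref{lem:independence-across-bands-boundary-lengths}, i.e.\ \cite[Lemma~3.2]{2026arXiv260324473M}: with probability at least $1-\exp(-\lambda^{-1/2}n)$, at least $n/2$ of the scales satisfy $Y_{-t_j}\le\lambda^{-1}t_j^2$.

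\textbf{Secondary issue: the conditioning.} \Cref{lem:moment}\eqref{it:moment-1} describes a band conditionally on its \emph{inner} filled ball; the exterior is then an infinite-area disk whose boundary length is the inner one. It does not apply given the outer boundary length $Y_{-t_j}$. Nor does it apply given all the $Y_{-t_j}$ at once, since given both boundary lengths a band has an annulus-type law that \eqref{it:moment-1} says nothing about.

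The paper proceeds as follows.
\begin{itemize}
\item It shifts to bands $A^\bullet_{t_j,\lambda^{-1}t_j}(0;D_\Phi)$ and orders radii increasingly as $s_j$.
\item It lets $j_k$ be the $k$-th scale with good boundary length. This is a stopping time for the forward exploration, so the Markov property applies at $B^\bullet_{s_{j_k}}(0;D_\Phi)$.
\item Since $Y\le\lambda^{-1}s_{j_k}^2$ there, the disk-rescaled width is $T\ge\lambda^{-1/2}/2$. So that band has $\Mod<M$ with conditional probability at most $(2\lambda^{1/2})^{\alpha}$.
\item The band at $j_k$ lies inside $B^\bullet_{s_{j_{k+1}}}(0;D_\Phi)$, so iterating over $k\le n/2$ bounds the probability that all of them fail by $(2\lambda^{1/2})^{\alpha n/2}$. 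This proves the ``some $j\in J$ succeeds'' lemma, and the ``at least $bn$'' statement follows from the union bound over $J$.
\end{itemize}
Equivalently, your Chernoff argument is repaired if you dominate only the adapted events $\{Y\text{ good at the inner radius and }\Mod<M\}$, which have uniformly small conditional probability. You then count the bad-$Y$ scales separately using the multi-scale lemma.
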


This follows immediately from \Cref{lem:moment}, together with an argument similar to the one applied in the proof of \Cref{lem:independence-across-bands}. For completeness, we include a proof.

\begin{lemma}\label{lem:independence-across-bands-boundary-lengths}
    For each $\zeta > 0$ and $b \in (0, 1)$, there exists $\lambda_\ast = \lambda_\ast(\zeta, b) \in (0, 1)$ such that for each $\lambda \in (0, \lambda_\ast]$, the following is true: Let $(\BC, \Phi; 0, \infty)$ be a $\sqrt{8/3}$-LQG cone. Let $\{t_j\}_{j \in \BN}$ be a decreasing sequence of positive real numbers such that $t_{j + 1}/t_j \le \lambda$ for all $j \in \BN$. Then for each $n \in \BN$, it holds with probability at least $1 - \exp(-\lambda^{-\zeta}n)$ that there are at least $bn$ values of $j \in [1, n]_\BZ$ for which the boundary length of the filled metric ball $B_{t_j}^\bullet(0; D_\Phi)$ is at most $\lambda^{-2\zeta} t_j^2$. 
\end{lemma}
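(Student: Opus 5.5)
We need to prove Lemma (boundary lengths): with probability at least $1-\exp(-\lambda^{-\zeta}n)$, at least $bn$ of the $j\in[1,n]_\BZ$ have $Y_{-t_j}\le \lambda^{-2\zeta}t_j^2$.

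The plan is to reduce to a product of conditional probabilities along the sequence of radii, using the Markov structure of the boundary length process $\{Y_{-t}\}$ together with scaling. First, by the scaling invariance of the Brownian plane (equivalently the $\sqrt{8/3}$-LQG cone), $Y_{-t}/t^2$ has a law not depending on $t$, and it has light tails. Second, I will use the Markov property of the breadth-first exploration run from outside in. Conditional on everything outside $B_{t_j}^\bullet(0;D_\Phi)$ together with $Y_{-t_j}$, the filled ball $B^\bullet_{t_j}$ is a Brownian disk-type object (a filled ball of radius $t_j$ with boundary length $Y_{-t_j}$, conditioned to contain the root at distance $t_j$). The process $s\mapsto Y_{-(t_j-s)}$ is then a $3/2$-stable CSBP started from $Y_{-t_j}$, conditioned to hit $0$ exactly at time $t_j$, equivalently a time-reversal of the Bessel-type process.

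The key estimate is then a uniform bound: for every $y>0$ and $t\le \lambda t'$,
\begin{equation*}
\BP\!\left[\,Y_{-t} > \lambda^{-2\zeta}t^2 \;\middle|\; Y_{-t'}=y\,\right]\le \exp(-c\lambda^{-\zeta'})
\end{equation*}
for some $\zeta'>\zeta$ (for instance $\zeta'=2\zeta$ up to constants, after adjusting $\zeta$). I would derive this from the explicit description of the filled-ball boundary length given its value at a larger radius, namely the CSBP conditioned to die at time $t'$. Note that the bound must hold uniformly in $y$, and the danger is large $y$. Heuristically, conditioning the CSBP to go extinct within the remaining time $t$ forces its value at distance $t$ from the root to be of order $t^2$, with tails like $\exp(-c\,x/t^2)$. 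This would come from the Laplace functional $\BE_x[e^{-\mu Z_s}]=\exp(-x u_s(\mu))$ with $u_s(\mu)=(\mu^{-1/2}+cs)^{-2}$ and the extinction probability $\exp(-x/(cs)^2)$. Applying this with $x=\lambda^{-2\zeta}t^2$ then gives tails $\exp(-c\lambda^{-2\zeta})$. I expect this uniform conditional tail bound to be the main obstacle: one must handle the h-transform by $\BP_x[\text{extinct by }t]$ jointly, and check that large $y$ only helps.

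Given the conditional bound with $p:=\exp(-c\lambda^{-2\zeta})$, iterate from $j=1$ upward, conditioning on $\sigma(Y_{-s}: s\ge t_j)$. This shows that the indicators of failure are stochastically dominated by i.i.d.\ Bernoulli$(p)$ variables. Hence
\begin{equation*}
\BP[\#\text{failures}\ge (1-b)n]\le \binom{n}{\lceil(1-b)n\rceil}p^{(1-b)n}\le 2^n\exp(-c(1-b)\lambda^{-2\zeta}n),
\end{equation*}
which is at most $\exp(-\lambda^{-\zeta}n)$ once $\lambda\le\lambda_\ast(\zeta,b)$.
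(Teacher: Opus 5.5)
The step that fails is your key estimate. The bound $\BP[Y_{-t}>\lambda^{-2\zeta}t^2\mid Y_{-t'}=y]\le \exp(-c\lambda^{-\zeta'})$ is false uniformly in $y$, and large $y$ hurts rather than helps.

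Write $s\defeq t'-t$ and $v(r)\defeq u_r(\infty)=(cr)^{-2}$. The $h$-transform by the density of the extinction time gives
\[
\BE\bigl[e^{-\mu Y_{-t}}\mid Y_{-t'}=y\bigr]=\frac{u_s'(\mu+v(t))}{u_s'(v(t))}\exp\bigl(-y\,[u_s(\mu+v(t))-u_s(v(t))]\bigr).
\]
Read this through the Poissonian decomposition of the CSBP started from $y$:
\begin{itemize}
\item The first factor is the single excursion that dies exactly at time $t'$. Its law does not depend on $y$ and is Gamma-like on scale $t^2$; this is what your heuristic captures.
\item The second factor comes from the other excursions that are still alive at time $s$ but dead before $t'$. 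Their number is Poisson with mean $y[v(s)-v(t')]\asymp yt/t'^3$, and each contributes order $t^2$.
\end{itemize}
Hence $\BE[Y_{-t}\mid Y_{-t'}=y]=O(t^2)+y(t/t')^3$. For fixed $t<t'$ and every $\mu>0$, the displayed Laplace transform tends to $0$ as $y\to\infty$, so
\[
\sup_y\BP[Y_{-t}>\lambda^{-2\zeta}t^2\mid Y_{-t'}=y]=1.
\]
Your heuristic factor $e^{-x/(ct)^2}$ only exponentially tilts the law of $Z_{t'-t}$. When $y$ is huge, that law lives far above scale $t^2$, and the tilt brings its mean down only to $y(t/t')^3$. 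So a failure at scale $t_j$ with very large $Y_{-t_j}$ makes a failure at $t_{j+1}$ likely. The stochastic domination of the failure indicators by i.i.d.\ Bernoulli$(p)$ variables is therefore unjustified.

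The same formula does give a contraction, and that is enough to repair the argument. Put $W_j\defeq Y_{-t_j}/t_j^2$ and $\mathcal{F}_j\defeq\sigma(Y_{-r}:r\ge t_j)$.
\begin{enumerate}
\item Evaluate the formula at $\mu=-\theta/t_{j+1}^2$ for a fixed small $\theta>0$. When $t_{j+1}\le\lambda t_j$ the Poisson mean is at most $C\lambda W_j$, which gives $\BE[e^{\theta W_{j+1}}\mid\mathcal{F}_j]\le Ce^{C\lambda W_j}$. In particular, your tail bound does hold on the event $\{W_j\le\lambda^{-2\zeta}\}$.
\item Iterate backwards from $j=n$ using Jensen's inequality. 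For $\theta'>0$ and $\lambda$ small enough that $\theta'+C\lambda\le\theta$, this yields $\BE[\exp(\theta'\sum_{j=1}^nW_j)]\le C^{n+1}$.
\item Having $(1-b)n$ failures forces $\sum_jW_j\ge(1-b)\lambda^{-2\zeta}n$. Markov's inequality then bounds the bad event by
\[
C^{n+1}\exp\bigl(-\theta'(1-b)\lambda^{-2\zeta}n\bigr)\le\exp(-\lambda^{-\zeta}n)
\]
for $\lambda\le\lambda_\ast(\zeta,b)$.
\end{enumerate}

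The paper itself gives no argument here; it only cites \cite[Lemma~3.2,~(ii)]{2026arXiv260324473M}.
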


\begin{proof}
    See \cite[Lemma~3.2,~(ii)]{2026arXiv260324473M}. 
\end{proof}

\begin{lemma}\label{lem:independence-across-bands-moduli-proof}
    For each $\alpha > 0$ and $M > 0$, there exists $\lambda_\ast = \lambda_\ast(\alpha, M) \in (0,1)$ such that, for each $\lambda \in (0,\lambda_\ast]$, the following is true. Let $(\BC, \Phi; 0, \infty)$ be a $\sqrt{8/3}$-LQG cone. Let $\{t_j\}_{j \in \BN}$ be a decreasing sequence of positive real numbers such that $t_{j + 1}/t_j \le \lambda$ for every $j \in \BN$. Then, for each finite subset $J \subset \BN$, with probability at least $1 - (\lambda^{\alpha})^{\#J}$, there exists $j \in J$ for which $\Mod(A_{\lambda t_j,t_j}^\bullet(0; D_\Phi)) \ge M$. 
\end{lemma}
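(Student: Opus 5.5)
We will prove this by peeling off the bands one at a time, using the Markov property of the breadth-first exploration, and combining the one-band estimate of \Cref{lem:moment} with the boundary-length control of \Cref{lem:independence-across-bands-boundary-lengths}. List the elements of $J$ in increasing order, $j_1 < \dots < j_m$, so that $t_{j_1} > \dots > t_{j_m}$. Let $F_k$ be the event that $\Mod(A_{\lambda t_{j_k}, t_{j_k}}^\bullet(0; D_\Phi)) < M$. We must show that $\BP\lbrack F_1 \cap \cdots \cap F_m\rbrack \le \lambda^{\alpha m}$.

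\textbf{Step 1: one band, conditioned from the inside.} By scaling, we may take $t = 1$. Condition on the filled ball $B_\lambda^\bullet(0; D_\Phi)$ and its boundary length $Y_{-\lambda} = \ell$. The complement is then a Brownian disk with boundary length $\ell$ and infinite area, and $A_{\lambda,1}^\bullet$ is the filled $(1-\lambda)$-neighbourhood of its boundary inside it. By the disk scaling rule, this band is distributed as $B_{T}^\bullet(\partial\SCD; D_\SCD)$ for a unit-boundary-length disk, with $T = (1-\lambda)\ell^{-1/2}$.

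Moreover, the conformal modulus of this band, viewed as a doubly connected domain in $\BC$, is a conformal invariant. It is therefore determined by the LQG surface structure of the band, which is itself determined by the disk. Hence, on the event $\{\ell \le \lambda^{-2\zeta}\lambda^2\}$ we have $T \ge \tfrac12 \lambda^{\zeta-1}$. Part~\eqref{it:moment-1} of \Cref{lem:moment}, applied with $p$ large and Markov's inequality, then gives
\begin{equation*}
    \BP\lbrack \Mod < M \mid \ell\rbrack \le \re^{2\pi pM}\, O(T^{-q}) \le C(p,M)\,\lambda^{(1-\zeta)q}.
\end{equation*}
Choosing $p$ large (so that $q$ is large) and $\zeta$ small, this is at most $\lambda^{3\alpha}$ once $\lambda$ is small.

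\textbf{Step 2: iterating.} The filled balls are nested. The band $A_{\lambda t_{j_k}, t_{j_k}}^\bullet$ lies outside $B_{t_{j_{k+1}}}^\bullet$ because $t_{j_{k+1}} \le \lambda t_{j_k}$. By the conditional independence of the inside and outside of a filled ball given its boundary length, conditionally on the outside of $B_{\lambda t_{j_k}}^\bullet$, the inner bands are governed only by $Y_{-\lambda t_{j_k}}$ and an independent filled-ball law. The natural approach is to explore outward from the centre instead, revealing $B_{\lambda t_{j_k}}^\bullet$ (which contains all deeper bands) and then the band. Then Step~1 applies conditionally on $\mathcal F_{k}$, the $\sigma$-algebra generated by the filled ball of radius $\lambda t_{j_k}$. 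We obtain
\begin{equation*}
    \BP\lbrack F_k \cap G_k \mid \mathcal F_k\rbrack \le \lambda^{3\alpha},
\end{equation*}
where $G_k$ is the good boundary-length event $\{Y_{-\lambda t_{j_k}} \le \lambda^{2-2\zeta}t_{j_k}^2\}$. Multiplying along $k$ (from the outermost band inward, each $F_k$ measurable with respect to $\mathcal F_{k-1}$) bounds the probability that $F_k \cap G_k$ holds for at least $m/2$ indices by $2^m \lambda^{3\alpha m/2}$.

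\textbf{Step 3: the main obstacle.} The main obstacle is the bad event that $G_k$ fails for many $k$. We need \Cref{lem:independence-across-bands-boundary-lengths} applied to the sequence $\{\lambda t_{j_k}\}_k$. This sequence is still decreasing with ratio at most $\lambda$, since $\lambda t_{j_{k+1}} \le \lambda \cdot \lambda t_{j_k}$. The lemma is stated with a threshold $\lambda^{-2\zeta}s^2$ in terms of the radius $s = \lambda t_{j_k}$, which matches $G_k$. It shows that, except with probability $\exp(-\lambda^{-\zeta}m)$, the event $G_k$ holds for at least $m/2$ indices; we take $b = 1/2$. Since $\exp(-\lambda^{-\zeta}m) \le \lambda^{\alpha m}/2$ for small $\lambda$, and $2^m\lambda^{3\alpha m/2} \le \lambda^{\alpha m}/2$ for small $\lambda$, summing the two contributions gives the claim.

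The delicate point is justifying the conditional bound in Step~2, since $G_k$ is determined by the inner ball while $F_k$ involves the band outside it. We would handle this by conditioning on $\mathcal F_k$ exactly as in Step~1 and performing a sequential (martingale-type) counting over the indices $k$ in $\{k : G_k\}$. If this fails, we would fall back on the general independence statement \Cref{lem:independence-across-bands}, with its $p$ tuned via Step~1.
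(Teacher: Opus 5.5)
Your proof is correct and follows essentially the same route as the paper. It conditions on the inner filled ball of each band, applies infinite-area Brownian disk scaling and \Cref{lem:moment},~\eqref{it:moment-1} on a good boundary-length event, obtains many good scales from \Cref{lem:independence-across-bands-boundary-lengths} applied to $\{\lambda t_{j_k}\}$, and multiplies the conditional bounds along the nested bands. The paper indexes the good scales by stopping times of the forward exploration, whereas you use deterministic radii and a union bound over index sets of size $\lceil m/2\rceil$, which costs only a harmless factor $2^m$. The ``delicate point'' you flag is not actually an issue: $G_k$ is $\mathcal F_k$-measurable, so $\mathbf 1_{G_k}\BP\lbrack F_k \mid \mathcal F_k\rbrack \le \lambda^{3\alpha}$ follows directly from your Step~1.
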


\begin{proof}[Proof of \Cref{lem:independence-across-bands-moduli} assuming \Cref{lem:independence-across-bands-moduli-proof}]
    By \Cref{lem:independence-across-bands-moduli-proof}, for each $J \subset [1, n]_\BZ$ with $\#J = \lceil(1 - b)n\rceil$, the probability that there does not exist $j \in J$ for which $\Mod(A_{\lambda t_j,t_j}^\bullet(0; D_\Phi)) \ge M$ is at most $\lambda^{\alpha(1 - b)n}$. Thus, by a union bound, 
    \begin{align*}
        &\BP\!\left\lbrack\text{there are fewer than } bn \text{ values of } j \in [1,n]_\BZ \text{ for which } \Mod(A_{\lambda t_j,t_j}^\bullet(0; D_\Phi)) \ge M\right\rbrack \\
        &\le \sum_{J \subset [1, n]_\BZ : \#J = \lceil(1 - b)n\rceil} \BP\!\left\lbrack\text{there does not exist } j \in J \text{ for which } \Mod(A_{\lambda t_j,t_j}^\bullet(0; D_\Phi)) \ge M\right\rbrack \\
        &\le \binom n{\lceil(1 - b)n\rceil} \lambda^{\alpha(1 - b)n} \le 2^n \lambda^{\alpha(1 - b)n}. 
    \end{align*}
    By increasing $\alpha$, we obtain the desired assertion.
\end{proof}

\begin{proof}[Proof of \Cref{lem:independence-across-bands-moduli-proof}]
    By replacing $\{t_j\}_{j \in \BN}$ with a subsequence, we may assume without loss of generality that $J = [1, n]_\BZ$. Moreover, by replacing $\{t_j\}_{j \in \BN}$ with $\{\lambda t_j\}_{j \in \BN}$, it suffices to show that for each $n \in \BN$, it holds with probability at least $1 - \lambda^{\alpha n}$ that there exists $j \in [1, n]_\BZ$ for which $\Mod(A_{t_j,\lambda^{-1} t_j}^\bullet(0; D_\Phi)) \ge M$.
    
    By \Cref{lem:moment},~\eqref{it:moment-1}, there exists $T_\ast > 0$ such that the following holds. Let $(\SCD, D_\SCD, \SM_\SCD)$ be a Brownian disk with unit boundary length and infinite area. Then, for each $T \ge T_\ast$, with probability at least $1 - T^{-\alpha}$, we have $\Mod(B_T^\bullet(\partial\SCD; D_\SCD)) \ge M$.

    By \Cref{lem:independence-across-bands-boundary-lengths}, there exists $\lambda_0 \in (0,1)$ such that, for each $\lambda \in (0,\lambda_0]$, the following holds. Let $\{t_j\}_{j \in \BN}$ be a decreasing sequence of positive real numbers such that $t_{j + 1}/t_j \le \lambda$ for every $j \in \BN$. Then, for each $n \in \BN$, with probability at least $1 - \exp(-\lambda^{-1/2}n)$, there are at least $n/2$ values of $j \in [1,n]_\BZ$ for which the boundary length of the filled metric ball $B_{t_j}^\bullet(0; D_\Phi)$ is at most $\lambda^{-1} t_j^2$.

    Set $\lambda_\ast \defeq \lambda_0 \wedge (2T_\ast)^{-2}$. Fix $\lambda \in (0,\lambda_\ast]$. Let $\{t_j\}_{j \in \BN}$ be a decreasing sequence of positive real numbers such that $t_{j + 1}/t_j \le \lambda$ for every $j \in \BN$. Fix $n \in \BN$. We now perform a forward metric exploration using filled metric balls. To simplify notation, for each $j \in [1,n]_\BZ$, write $s_j \defeq t_{n + 1 - j}$. For each $k \in \BN$, let $j_k$ be the $k$-th smallest $j \in [1,n]_\BZ$ for which the boundary length of $B_{s_j}^\bullet(0; D_\Phi)$ is at most $\lambda^{-1} s_j^2$, with the convention that $j_k = \infty$ if there are fewer than $k$ such values of $j$. The preceding paragraph implies that $\BP\lbrack j_{\lceil n/2\rceil} \le n\rbrack \ge 1 - \exp(-\lambda^{-1/2}n)$.

    On the other hand, for each $k \in \BN$, by the Markov property of the forward metric exploration of the Brownian plane, on the event that $j_k < \infty$, conditional on the $\sqrt{8/3}$-LQG surface parameterized by $B_{s_{j_k}}^\bullet(0; D_\Phi)$, the $\sqrt{8/3}$-LQG surface parameterized by $\BC \setminus B_{s_{j_k}}^\bullet(0; D_\Phi)$ is a Brownian disk with boundary length given by the boundary length of $B_{s_{j_k}}^\bullet(0; D_\Phi)$ and infinite area. Thus, by the scaling property of the Brownian disk with infinite area,
    \begin{align*}
        \BP\!\left\lbrack\Mod(A_{s_{j_k},\lambda^{-1} s_{j_k}}^\bullet(0; D_\Phi)) \ge M \ \middle\vert \ B_{s_{j_k}}^\bullet(0; D_\Phi)\right\rbrack &= \BP\!\left\lbrack\Mod(B_{(\lambda^{-1} - 1)s_{j_k}Y_{-s_{j_k}}^{-1/2}}^\bullet(\partial\SCD; D_\SCD)) \ge M\right\rbrack \\
        &\ge \BP\!\left\lbrack\Mod(B_{\lambda^{-1/2}/2}^\bullet(\partial\SCD; D_\SCD)) \ge M\right\rbrack \\
        &\ge 1 - (2\lambda^{1/2})^\alpha, 
    \end{align*}
    where $Y_{-s_{j_k}}$ denotes the boundary length of $B_{s_{j_k}}^\bullet(0; D_\Phi)$.

    Therefore,
    \begin{align*}
        &\BP\!\left\lbrack\text{there does not exist } j \in [1, n]_\BZ \text{ such that } \Mod(A_{t_j,\lambda^{-1}t_j}^\bullet(0; D_\Phi)) \ge M\right\rbrack \\
        &\le \BP\lbrack j_{\lceil n/2\rceil} = \infty\rbrack + \BP\!\left\lbrack\text{there does not exist } k \in [1, n/2]_\BZ \text{ such that } \Mod(A_{s_{j_k},\lambda^{-1}s_{j_k}}^\bullet(0; D_\Phi)) \ge M\right\rbrack \\
        &\le \exp(-\lambda^{-1/2}n) + (2\lambda^{1/2})^{\alpha n/2}.
    \end{align*}
    Choosing $\alpha > 0$ sufficiently large and $\lambda_\ast > 0$ sufficiently small, we complete the proof.
\end{proof}

\subsection{Proof of \Cref{lem:quasiconformality}}

\begin{lemma}\label{lem:Teichmueller}
    Let $A \subset \BC$ be a doubly connected domain. Let $z$ belong to the bounded connected component of $\BC \setminus A$. Set
    \begin{align*}
        R_1 &\defeq \inf\{R > 0 : B_R(z) \text{ contains the bounded connected component of } \BC \setminus A\}; \\
        R_2 &\defeq \sup\{R > 0 : B_R(z) \text{ does not intersect the unbounded connected component of } \BC \setminus A\}. 
    \end{align*}
    Then 
    \begin{equation*}
        \left(\re^{2\pi\Mod(A)}/16 - 1\right) \vee \left(16\re^{-\frac\pi{2\Mod(A)}}\right) \le R_2/R_1 \le \re^{2\pi\Mod(A)}.
    \end{equation*}
    In particular, if $\Mod(A) > \frac{5\log(2)}{2\pi}$, then there exists $R > 0$ such that $\partial B_R(z) \subset A$. 
\end{lemma}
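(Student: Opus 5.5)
We reduce to two classical extremal problems for doubly connected domains, namely Grötzsch's and Teichmüller's. Normalize by translating $z$ to $0$ and scaling so that $R_1 = 1$. Since both the modulus and the ratio $R_2/R_1$ are invariant under this normalization, we may assume $z = 0$ and $R_1 = 1$. Write $E$ for the bounded complementary component and $F$ for the unbounded one. Then $E \subset \overline{B_1(0)}$, and $E$ contains a point $a$ with $\lVert a\rVert = 1$. Also $F \cap B_{R_2}(0) = \emptyset$, and $F$ contains a point $b$ with $\lVert b\rVert = R_2$.

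\emph{Upper bound.} The round annulus $A_{1,R_2}(0)$ separates $E$ from $F$ and is contained in $A$. By monotonicity of modulus for nested separating annuli,
\begin{equation*}
    \Mod(A) \ge \Mod(A_{1,R_2}(0)) = \tfrac1{2\pi}\log(R_2).
\end{equation*}
This gives $R_2/R_1 \le \re^{2\pi\Mod(A)}$.

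\emph{Lower bounds.} We use Teichmüller's extremal problem. The set $E$ is a continuum containing $0$ and $a$, and $F$ is a connected closed set containing $b$ and $\infty$. Among all doubly connected domains separating $\{0, a\}$ from $\{b, \infty\}$, the modulus is maximized by the Teichmüller ring $\BC \setminus ([-1, 0] \cup [P, \infty))$ with $P = \lVert b\rVert/\lVert a\rVert = R_2$. Hence
\begin{equation*}
    \Mod(A) \le \Mod_T(R_2) = \tfrac1{2\pi}\log\Psi(R_2).
\end{equation*}
We then invoke the standard estimate $\Psi(P) \le 16(P + 1)$. This gives $\re^{2\pi\Mod(A)} \le 16(R_2 + 1)$, which rearranges to $R_2 \ge \re^{2\pi\Mod(A)}/16 - 1$.

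For the second lower bound, which matters when the modulus is small, we use the complementary estimate for the Teichmüller ring as $P \to 0$. This is obtained from the duality between the moduli of the quadrilateral and of the ring, together with $\mu(r)\mu(r') = \pi^2/4$ applied to the Grötzsch function $\mu$. Concretely, we use the bound
\begin{equation*}
    \Mod_T(P) \le \frac{\pi}{2\log(16/P)} \quad \text{for } P \le 16.
\end{equation*}
It follows that $\Mod(A) \le \pi/(2\log(16/R_2))$, which rearranges to $R_2 \ge 16\re^{-\pi/(2\Mod(A))}$. When $R_2 > 16$ this bound holds trivially.

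The main obstacle is deriving these two explicit inequalities for $\Psi$ from the classical estimates $\mu(r) < \log(4/r)$ and $\mu(r) > \log\bigl((1 + \sqrt{r'})^2/r\bigr)$, and checking that the constant $16$ comes out as claimed. I would cite the standard references (Lehto--Virtanen, or Anderson--Vamanamurthy--Vuorinen) for these.

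\emph{The ``in particular'' clause.} If $\Mod(A) > \frac{5\log 2}{2\pi}$, then $\re^{2\pi\Mod(A)} > 32$. The first lower bound then gives $R_2/R_1 > 32/16 - 1 = 1$, so $R_2 > R_1$. Choose any $R \in (R_1, R_2)$. The circle $\partial B_R(z)$ misses $E$, because $E \subset \overline{B_{R_1}(z)}$. It also misses $F$, because $F \cap B_{R_2}(z) = \emptyset$. Therefore $\partial B_R(z) \subset A$.
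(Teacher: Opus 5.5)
Your proposal is correct and follows essentially the same route as the paper. The upper bound comes from monotonicity of modulus under inclusion of the round annulus $A_{R_1,R_2}(z)$, and the two lower bounds come from Teichm\"uller's extremal theorem together with $16P \le \Psi(P) \le 16(P+1)$ and the duality $\Mod_T(P)\Mod_T(1/P) = 1/4$. The one thing to state explicitly is the case $R_2 \le R_1$ in the upper bound, which the paper treats separately: there the round annulus is empty, but the bound still holds trivially because $\re^{2\pi\Mod(A)} > 1 \ge R_2/R_1$.
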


\begin{proof}
    We may assume without loss of generality that $z = 0$. If $R_2 \le R_1$, then $\re^{2\pi\Mod(A)} \ge 1 \ge R_2/R_1$; otherwise, $\Mod(A) \ge \Mod(A_{R_1,R_2}(0)) = \frac1{2\pi}\log(R_2/R_1)$. On the other hand, by \cite[Theorem~4-7]{CoInAhl}, $\Mod(A) \le \Mod(\BC \setminus ([-R_1, 0] \cup [R_2, \infty)))$. By \cite[(4-21)]{CoInAhl} as well as the discussion below it, $\Mod(\BC \setminus ([-R_1, 0] \cup [R_2, \infty))) \le \frac1{2\pi}\log(16(R_2/R_1 + 1))$ and
    \begin{equation*}
        \Mod(\BC \setminus ([-R_1, 0] \cup [R_2, \infty))) = \frac1{4\Mod(\BC \setminus ([-R_2, 0] \cup [R_1, \infty)))} \le \frac{2\pi}{4\log(16R_1/R_2)}. 
    \end{equation*}
    This completes the proof. 
\end{proof}

\begin{lemma}\label{lem:quasiconformality-proof}
    Let $(\BC, \Phi; 0, \infty)$ be a $\sqrt{8/3}$-LQG cone. Let $\eta^\prime \colon \BR \to \BC$ be an independent whole-plane space-filling SLE$_{\kappa^\prime}$ curve from $\infty$ to $\infty$, parameterized by the LQG area measure $\SM_\Phi$. Fix $T > 0$ and $\varepsilon_\ast \in (0,1)$. Given $\Phi$ and $\eta^\prime$, let $\{x_j\}_{j \in \BN}$ be conditionally independent samples from $\SM_\Phi|_{\eta^\prime([-T, T])}$. Write $E(T,\varepsilon_\ast)$ for the event that $B_1^\bullet(0; D_\Phi) \subset \eta^\prime([-T, T])$ and
    \begin{equation*}
        B_1^\bullet(0; D_\Phi) \subset \bigcup \left\{B_\varepsilon(x_j; D_\Phi) : j \in [1, \varepsilon^{-5}]_\BZ, \ x_j \in B_1^\bullet(0; D_\Phi)\right\}, \quad \forall \varepsilon \in (0, \varepsilon_\ast], 
    \end{equation*}
    Then:
    \begin{enumerate}
        \item\label{it:quasiconformality-proof-0} For each $M > 0$, there exists $\lambda = \lambda(M) \in (0, 1)$ such that, on the event $E(T,\varepsilon_\ast)$ and outside an event with polynomially small probability as $\varepsilon \to 0$, for every $z \in B_1^\bullet(0; D_\Phi)$, there are at least $\frac12\log_{1/\lambda}(1/\varepsilon)$ values of $t \in [\varepsilon^2, \varepsilon] \cap \{\lambda^n\}_{n \in \BZ}$ for which $\Mod(A_{\lambda t,t}^\bullet(z; D_\Phi)) \ge M$.
        \item\label{it:quasiconformality-proof-1} For each $\lambda, b \in (0, 1)$, there exists $M = M(\lambda, b) > 0$ such that, on the event $E(T,\varepsilon_\ast)$ and outside an event with polynomially small probability as $\varepsilon \to 0$, for every $z \in B_1^\bullet(0; D_\Phi)$, there are at least $b\log_{1/\lambda}(1/\varepsilon)$ values of $t \in [\varepsilon^2, \varepsilon] \cap \{\lambda^n\}_{n \in \BZ}$ for which
        \begin{equation*}
            1/M \le \Mod(A_{\lambda t,t}^\bullet(z; D_\Phi)) \le M. 
        \end{equation*}
    \end{enumerate}
\end{lemma}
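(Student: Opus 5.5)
We reduce to a fixed center $x_j$ and then take a union bound over the net. First, for a fixed sampled point $x_j$, the surface $(\BC,\Phi;x_j,\infty)$ is again a $\sqrt{8/3}$-LQG cone. This holds because $x_j = \eta'(U)$ with $U$ uniform on $[-T,T]$ independent of everything, and for each deterministic $t$ the surface $(\BC,\Phi;\eta'(t),\infty)$ is a cone, so we may condition on $U$. Hence \Cref{lem:independence-across-bands-moduli} applies to metric bands centered at $x_j$. For part~\eqref{it:quasiconformality-proof-1} we additionally need an analogue of \Cref{lem:independence-across-bands-moduli} for the event $\{1/M\le \Mod\le M\}$. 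We get it from \Cref{lem:independence-across-bands}, with $p$ chosen as there for $\lambda$, a large $\alpha$, and $b'$ slightly larger than $b$. The event is determined by the surface parametrized by the band $A^\bullet_{\lambda t,t}$. By scale invariance its probability does not depend on $t$, and it tends to $1$ as $M\to\infty$ because the modulus is a.s.\ positive and finite. So for large $M$ it exceeds $p$.

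The main obstacle is transferring from centers $x_j$ to arbitrary $z$. Given $z\in B_1^\bullet(0;D_\Phi)$, on $E(T,\varepsilon_\ast)$ we pick $x_j$ with $j\le \delta^{-5}$ and $D_\Phi(z,x_j)<\delta$, where $\delta=\varepsilon^{2}\lambda^{2}$ or a comparable scale. We then compare filled balls: for $t\ge \varepsilon^2$ we have $B^\bullet_{t-\delta}(x_j)\subset B^\bullet_t(z)\subset B^\bullet_{t+\delta}(x_j)$. A band $A^\bullet_{\lambda t,t}(z)$ therefore contains $A^\bullet_{\lambda t+\delta,\,t-\delta}(x_j)$ and is contained in $A^\bullet_{\lambda t-\delta,\,t+\delta}(x_j)$, as topological annuli separating the same pair of complementary pieces. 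Monotonicity of modulus for nested separating annuli then gives $\Mod(A^\bullet_{\lambda t,t}(z))\ge \Mod(A^\bullet_{\lambda t+\delta,t-\delta}(x_j))$ and an upper bound by $\Mod(A^\bullet_{\lambda t-\delta,t+\delta}(x_j))$. The perturbed bands do not align exactly with the band family at $x_j$. To handle this in part~\eqref{it:quasiconformality-proof-0}, we apply the estimate at $x_j$ with parameter $\lambda' = \lambda^{1/2}$, say, on the grid $\{\lambda'^{\,n}\}$. For $t\ge\varepsilon^2\gg\delta$, the band $A^\bullet_{\lambda t+\delta,t-\delta}(x_j)$ contains a grid band of ratio $\lambda'$ (up to a constant factor, absorbed by adjusting $\lambda$), and modulus is monotone. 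In part~\eqref{it:quasiconformality-proof-1}, we control the upper bound by applying the two-sided estimate at $x_j$ to the slightly larger band, using a grid with ratio $\lambda^{1/2}$ and the fact that the larger band is the union of at most two consecutive grid bands. Subadditivity-type bounds for moduli of annuli in series are not available in the upper direction in general. Instead, we bound by \Cref{lem:Teichmueller} through Euclidean radii: the ratio $R_2/R_1$ of a union of two consecutive bands is at most the product of the individual $\re^{2\pi\Mod}$ bounds, up to the constant from the lemma's lower bound $\re^{2\pi\Mod}/16-1$. This yields a bound by some $M'(M)$, and we rename $M'$ as $M$. The counting thresholds $\tfrac12$ and $b$ are recovered by taking the fraction at $x_j$ closer to $1$ (this is where we need $b'$ close to $1$), since each band at $z$ corresponds to a bounded number of grid bands at $x_j$.

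Finally, the union bound. Since $\#[\varepsilon^2,\varepsilon]\cap\{\lambda^n\}\asymp\log_{1/\lambda}(1/\varepsilon)$, for each of the at most $\delta^{-5}=\varepsilon^{-10}\lambda^{-10}$ centers $x_j$ the failure probability is at most $C\re^{-\alpha\log_{1/\lambda}(1/\varepsilon)}=C\varepsilon^{\alpha/\log(1/\lambda)}$ from \Cref{lem:independence-across-bands} (resp.\ $\lambda^{\alpha n}=\varepsilon^{\alpha}$ from \Cref{lem:independence-across-bands-moduli}). The indices $j$ are deterministic, so the union bound applies directly. Choosing $\alpha$ large relative to $\log(1/\lambda)$ (which the lemmas allow, at the cost of making $p$, equivalently $M$, larger) makes the total failure probability $O(\varepsilon^{c})$ for some $c>0$, i.e.\ polynomially small. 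The restriction to $x_j\in B_1^\bullet(0;D_\Phi)$ is harmless. The delicate point remaining is the nested-band comparison, which needs $\delta\ll\lambda\varepsilon^2$, and the monotonicity of modulus for annuli that are not concentric.
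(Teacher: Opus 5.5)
Your overall architecture matches the paper's. Each $(\BC,\Phi;x_j,\infty)$ is a cone, the band estimates are applied at the net points and union-bounded over polynomially many of them, and an arbitrary $z$ is handled through $B^\bullet_s(x_j;D_\Phi)\subset B^\bullet_{s+\delta}(z;D_\Phi)$ when $D_\Phi(z,x_j)<\delta$, together with monotonicity of modulus for nested separating annuli. Part~(0) and the lower bound in part~(1) go through this way; the $\lambda^{1/2}$-grid is unnecessary but harmless.

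The gap is the upper bound in part~(1). You apply the two-sided estimate to grid bands. You then try to bound the modulus of the enlarged band $A^\bullet_{\lambda t-\delta,t+\delta}(x_j;D_\Phi)$, a union of consecutive grid bands, via \Cref{lem:Teichmueller}. The claimed inequality goes the wrong way. Let $A=A_1\cup\gamma\cup A_2$ with $A_1$ inside $A_2$ and common boundary $\gamma$. In the notation of \Cref{lem:Teichmueller},
\[
\frac{R_2(A)}{R_1(A)}=\frac{R_2(A_1)}{R_1(A_1)}\cdot\frac{R_2(A_2)}{R_1(A_2)}\cdot\frac{\max_{w\in\gamma}\lVert w-z\rVert}{\min_{w\in\gamma}\lVert w-z\rVert}.
\]
The last factor measures the Euclidean non-roundness of the middle curve. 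It is at least $1$ and is not controlled by the moduli of the pieces, even with two-sided bounds.

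For a concrete example, take $R>4$ and $D=B_2(0)\cup\{w:\lvert\arg w\rvert<\pi/4,\ \lVert w\rVert<R/2\}$. Then $D\setminus\overline{\BD}$ and $B_R(0)\setminus\overline D$ both have moduli in $[\frac{\log 2}{2\pi},\frac{\log 48}{2\pi}]$. This follows from the round subannuli $A_{1,2}(0)$ and $A_{R/2,R}(0)$ together with \Cref{lem:Teichmueller}. Yet their union with $\partial D$ is $A_{1,R}(0)$, of modulus $\frac1{2\pi}\log R$. So no bound $M'(M)$ of the kind you describe exists.

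The fix, which is what the paper does, is never to combine bands. Apply \Cref{lem:independence-across-bands} directly to a \emph{single} enlarged band, i.e.\ to the events $\{\Mod(A^\bullet_{\lambda t/2,2t}(x_j;D_\Phi))\le M\}$. For the lower bound, use $\{\Mod(A^\bullet_{2\lambda t,t/2}(x_j;D_\Phi))\ge 1/M\}$.
\begin{itemize}
\item By scale invariance these events have $t$-independent probability tending to $1$ as $M\to\infty$.
\item After passing to sparse subsequences of scales so that the enlarged bands are disjoint, the events are determined by disjoint metric bands. The lemma then applies, with $\alpha$ large enough to survive the union bound.
\end{itemize}
Take net spacing $\varepsilon^3$, so that $D_\Phi(z,x_j)\le\varepsilon^3\ll\lambda t$ for $t\ge\varepsilon^2$. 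Then the containments $A^\bullet_{2\lambda t,t/2}(x_j;D_\Phi)\subset A^\bullet_{\lambda t,t}(z;D_\Phi)\subset A^\bullet_{\lambda t/2,2t}(x_j;D_\Phi)$ and monotonicity give both bounds at $z$ directly.
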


\begin{proof}
    Recall that each $(\BC, \Phi; x_j, \infty)$ has the law of a $\sqrt{8/3}$-LQG cone.
    
    First, we consider assertion~\eqref{it:quasiconformality-proof-0}. By \Cref{lem:independence-across-bands-moduli} and a union bound, there exists $\lambda \in (0, 1)$ such that, with polynomially high probability as $\varepsilon \to 0$, for each $j \in [1, \varepsilon^{-15}]_\BZ$, there are at least $\frac12\log_{1/\lambda}(1/\varepsilon)$ values of $t \in [\varepsilon^2, \varepsilon] \cap \{\lambda^n\}_{n \in \BZ}$ for which $\Mod(A_{2\lambda t,t/2}^\bullet(x_j; D_\Phi)) \ge M$. On the other hand, on the event $E(T,\varepsilon_\ast)$, for each sufficiently small $\varepsilon \in (0,1)$ and each $z \in B_1^\bullet(0; D_\Phi)$, there exists $j \in [1, \varepsilon^{-15}]_\BZ$ such that $D_\Phi(z,x_j) \le \varepsilon^3$, in which case $A_{2\lambda t,t/2}^\bullet(x_j; D_\Phi) \subset A_{\lambda t,t}^\bullet(z; D_\Phi)$ for all $t \in [\varepsilon^2, \varepsilon] \cap \{\lambda^n\}_{n \in \BZ}$. This completes the proof of assertion~\eqref{it:quasiconformality-proof-0}.

    Next, we consider assertion~\eqref{it:quasiconformality-proof-1}. By \Cref{lem:independence-across-bands} and a union bound, there exists $M = M(\lambda, b) > 0$ such that, with polynomially high probability as $\varepsilon \to 0$, for each $j \in [1, \varepsilon^{-15}]_\BZ$, there are at least $b\log_{1/\lambda}(1/\varepsilon)$ values of $t \in [\varepsilon^2, \varepsilon] \cap \{\lambda^n\}_{n \in \BZ}$ for which $1/M \le \Mod(A_{\lambda t/2,2t}^\bullet(x_j; D_\Phi)) \le M$. On the other hand, on the event $E(T,\varepsilon_\ast)$, for each sufficiently small $\varepsilon \in (0,1)$ and each $z \in B_1^\bullet(0; D_\Phi)$, there exists $j \in [1, \varepsilon^{-15}]_\BZ$ such that $D_\Phi(z,x_j) \le \varepsilon^3$, in which case $A_{\lambda t,t}^\bullet(z; D_\Phi) \subset A_{\lambda t/2,2t}^\bullet(x_j; D_\Phi)$ for all $t \in [\varepsilon^2, \varepsilon] \cap \{\lambda^n\}_{n \in \BZ}$. This completes the proof of assertion~\eqref{it:quasiconformality-proof-1}.
\end{proof}

\begin{figure}[ht!]
    \centering
    \includegraphics[width=\linewidth]{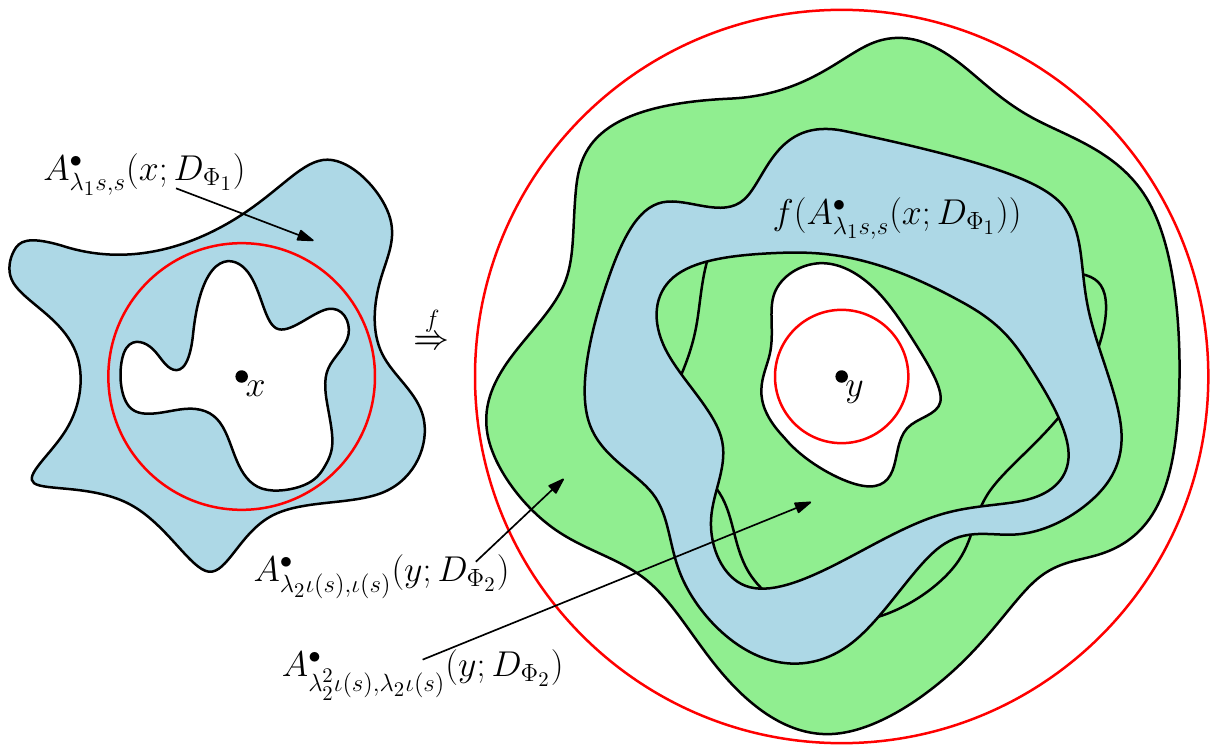}
    \caption{Illustration of the proof of \Cref{lem:quasiconformality}. {\bfseries Left:} The blue region depicts the metric band $A_{\lambda_1s,s}^\bullet(x; D_{\Phi_1})$ of the first $\sqrt{8/3}$-LQG cone $(\BC, \Phi_1; 0, \infty)$. The parameter $\lambda_1$ is chosen to be sufficiently small so that, with very high probability, the conformal modulus of $A_{\lambda_1s,s}^\bullet(x; D_{\Phi_1})$ is bounded below by the constant $\frac{5\log(2)}{2\pi}$ for a large proportion of values of $s \in \{\lambda_1^n\}_{n \in \BZ}$. The condition $\Mod(A_{\lambda_1s,s}^\bullet(x; D_{\Phi_1})) > \frac{5\log(2)}{2\pi}$ implies that this metric band contains a Euclidean circle centered at $x$. {\bfseries Right:} Set $y = f(x)$ and $\lambda_2 = 1/\eta(1/\lambda_1)$. The green regions depict the metric bands $A_{\lambda_2\iota(s),\iota(s)}^\bullet(y; D_{\Phi_2})$ and $A_{\lambda_2^2\iota(s),\lambda_2\iota(s)}^\bullet(y; D_{\Phi_2})$, with the image of $A_{\lambda_1s,s}^\bullet(x; D_{\Phi_1})$ overlaid on them. Here $\iota(s)$ is defined to be the largest $t \in \{\lambda_2^n\}_{n \in \BZ}$ such that $f(A_{\lambda_1s,s}^\bullet(x; D_{\Phi_1})) \cap A_{\lambda_2t,t}^\bullet(y; D_{\Phi_2}) \neq \emptyset$. The definitions of $\eta$-quasisymmetry and $\lambda_2$ imply that $f(A_{\lambda_1s,s}^\bullet(x; D_{\Phi_1})) \subset A_{\lambda_2^2\iota(s),\iota(s)}^\bullet(y; D_{\Phi_2})$. By independence across metric bands, we may arrange that, for a large proportion of values of $t \in \{\lambda_2^n\}_{n \in \BZ}$, the conformal modulus of $A_{\lambda_2t,t}^\bullet(y; D_{\Phi_2})$ is bounded above and below. The pigeonhole principle then implies that there exist arbitrarily small values of $s \in \{\lambda_1^n\}_{n \in \BZ}$ for which $\Mod(A_{\lambda_1s,s}^\bullet(x; D_{\Phi_1})) > \frac{5\log(2)}{2\pi}$ and $\Mod(A_{\lambda_2t,t}^\bullet(y; D_{\Phi_2}))$ is bounded above and below for each of the four values $t \in \{\lambda_2^2\iota(s), \lambda_2\iota(s), \iota(s), \lambda_2^{-1}\iota(s)\}$. The latter condition implies that $A_{\lambda_2^2\iota(s),\iota(s)}^\bullet(y; D_{\Phi_2})$ is contained in a Euclidean annulus centered at $y$ with constant ratio. In particular, the image of the Euclidean circle centered at $x$ contained in $A_{\lambda_1s,s}^\bullet(x; D_{\Phi_1})$ is contained in a Euclidean annulus centered at $y$ with constant ratio. Combining this with condition~\eqref{it:def-quasiconformal-3}, we conclude that $f$ must be quasiconformal.}
    \label{fig:quasiconformality}
\end{figure}

Before proceeding, we record the following observations about general quasisymmetric mappings.

\begin{remark}\label{rem:quasisymmetric}
    Let $f \colon (X, D_X) \to (Y, D_Y)$ be an $\eta$-quasisymmetric homeomorphism. 
    \begin{enumerate}
        \item For each $a > 1$, $x \in X$, and $s > 0$, there exists $t > 0$ such that 
        \begin{equation*}
            f(A_{s,as}(x; D_X)) \subset A_{t,\eta(a)t}(f(x); D_Y). 
        \end{equation*}
        Similarly, for each $b \in (0, 1)$, $x \in X$, and $s > 0$, there exists $t > 0$ such that 
        \begin{equation*}
            A_{bt,t}(f(x); D_Y) \subset f(A_{\eta^{-1}(b)s,s}(x; D_X)). 
        \end{equation*}
        \item\label{it:quasisymmetric-1} For each $\mu, \nu \in (0, 1)$, there exists $C = C(\eta, \mu, \nu) \ge 1$ such that the following is true: Let $x \in X$ and $s, t > 0$. Suppose that $\iota \colon \BN \to \BN$ is an assignment such that
        \begin{equation*}
            f(A_{\mu^{j + 1}s,\mu^js}(x; D_X)) \cap A_{\nu^{\iota(j) + 1}t,\nu^{\iota(j)}t}(f(x); D_Y) \neq \emptyset, \quad \forall j \in \BN. 
        \end{equation*}
        Then:
        \begin{itemize}
            \item $\iota([n, 2n]_\BZ) \subset [C^{-1}n, Cn]_\BZ$ for all sufficiently large $n \in \BN$.
            \item $\#\iota^{-1}(n) \le C$ for all $n \in \BN$.
        \end{itemize}
        \item\label{it:quasisymmetric-2} The following elementary observation is immediate to verify: For each $C \ge 1$ and $p \in (0, 1)$, there exists $q = q(C, p) \in (0, 1)$ such that the following is true: Let $\iota \colon \BZ \to \BZ$ and $S, T \subset \BZ$. Suppose that the following conditions are satisfied:
        \begin{itemize}
            \item $\iota([n, 2n]_\BZ) \subset [C^{-1}n, Cn]_\BZ$ and $\#\iota^{-1}(n) \le C$ for all sufficiently large $n \in \BN$.
            \item $\#(S \cap [n, 2n]_\BZ) \ge pn$ and $\#(T \cap [n, 2n]_\BZ) \ge qn$ for all sufficiently large $n \in \BN$. 
        \end{itemize}
        Then for every sufficiently large $n \in \BN$, there exists $j \in S \cap [n, 2n]_\BZ$ such that $[\iota(j) - C, \iota(j) + C]_\BZ \subset T$. 
    \end{enumerate}
\end{remark}

\begin{proof}[Proof of \Cref{lem:quasiconformality}]
    See \Cref{fig:quasiconformality} for an illustration.  
    
    By the scaling property, it suffices to show that, almost surely, every $\eta$-quasisymmetric embedding of $(B_1^\bullet(0; D_{\Phi_1}), D_{\Phi_1})$ into $(B_1^\bullet(0; D_{\Phi_2}), D_{\Phi_2})$ is $K$-quasiconformal for some $K = K(\eta) \ge 1$. Fix $T > 0$ and $\varepsilon_\ast \in (0, 1)$. Let $E_1(T,\varepsilon_\ast)$ (resp.~$E_2(T,\varepsilon_\ast)$) be defined in the same manner as the event $E(T,\varepsilon_\ast)$ in \Cref{lem:quasiconformality-proof}, but with $\Phi_1$ (resp.~$\Phi_2$) in place of $\Phi$. By \Cref{lem:net}, there almost surely exist $T > 0$ and $\varepsilon_\ast \in (0,1)$ such that both $E_1(T,\varepsilon_\ast)$ and $E_2(T,\varepsilon_\ast)$ occur. Thus, it suffices to work on these events.

    By \Cref{lem:quasiconformality-proof},~\eqref{it:quasiconformality-proof-0} and the Borel--Cantelli lemma, there exists $\lambda_1 \in (0, 1)$ such that, almost surely, for each sufficiently small $\varepsilon \in \{\lambda_1^n\}_{n \in \BZ}$ and each $x \in B_1^\bullet(0; D_{\Phi_1})$, there are at least $\frac12\log_{1/\lambda_1}(1/\varepsilon)$ values of $s \in [\varepsilon^2, \varepsilon] \cap \{\lambda_1^n\}_{n \in \BZ}$ for which $\Mod(A_{\lambda_1s,s}^\bullet(x; D_{\Phi_1})) > \frac{5\log(2)}{2\pi}$, in which case, by \Cref{lem:Teichmueller}, there exists $r > 0$ such that $\partial B_r(x) \subset A_{\lambda_1s,s}^\bullet(x; D_{\Phi_1})$. 

    Set $\lambda_2 \defeq 1/\eta(1/\lambda_1)$. Fix $b \in (0, 1)$ to be chosen later. By \Cref{lem:quasiconformality-proof},~\eqref{it:quasiconformality-proof-1} and the Borel--Cantelli lemma, there exists $M = M(\lambda_2, b) \ge 1$ such that, almost surely, for each sufficiently small $\varepsilon \in \{\lambda_2^n\}_{n \in \BZ}$ and each $y \in B_1^\bullet(0; D_{\Phi_2})$, there are at least $b\log_{1/\lambda_2}(1/\varepsilon)$ values of $t \in [\varepsilon^2, \varepsilon] \cap \{\lambda_2^n\}_{n \in \BZ}$ for which $M^{-1} \le \Mod(A_{\lambda_2t,t}^\bullet(y; D_{\Phi_2})) \le M$. 

    Let $f$ be an $\eta$-quasisymmetric embedding of $(B_1^\bullet(0; D_{\Phi_1}), D_{\Phi_1})$ into $(B_1^\bullet(0; D_{\Phi_2}), D_{\Phi_2})$. We now prove that $f$ is quasiconformal by verifying~\eqref{it:def-quasiconformal-3}. Fix $x \in B_1^\bullet(0; D_{\Phi_1})$ and $y \defeq f(x)$. Consider the assignment
    \begin{equation*}
        \iota \colon \{\lambda_1^j\}_{j \in \BZ} \to \{\lambda_2^j\}_{j \in \BZ} \colon s \mapsto \sup\!\left\{t \in \{\lambda_2^j\}_{j \in \BZ} : f(A_{\lambda_1s,s}^\bullet(x; D_{\Phi_1})) \cap A_{\lambda_2t,t}^\bullet(y; D_{\Phi_2}) \neq \emptyset\right\}. 
    \end{equation*}
    (Here, we observe that $f(A_{\lambda_1s,s}^\bullet(x; D_{\Phi_1})) \cap A_{\lambda_2t,t}^\bullet(y; D_{\Phi_2}) \neq \emptyset$ if and only if $f(A_{\lambda_1s,s}(x; D_{\Phi_1})) \cap A_{\lambda_2t,t}(y; D_{\Phi_2}) \neq \emptyset$.) By the choice of $\lambda_2$, we have $f(A_{\lambda_1s,s}^\bullet(x; D_{\Phi_1})) \subset A_{\lambda_2^2\iota(s),\iota(s)}^\bullet(y; D_{\Phi_2})$ (or, equivalently, $f(A_{\lambda_1s,s}(x; D_{\Phi_1})) \subset A_{\lambda_2^2\iota(s),\iota(s)}(y; D_{\Phi_2})$) for all $s \in \{\lambda_1^j\}_{j \in \BZ}$. 
    By \Cref{rem:quasisymmetric},~\eqref{it:quasisymmetric-1} and~\eqref{it:quasisymmetric-2}, we may choose $b = b(\eta, \lambda_1, \lambda_2)$ to be sufficiently close to $1$ so that for each sufficiently small $\varepsilon \in \{\lambda_1^n\}_{n \in \BZ}$, there exists $s \in [\varepsilon^2, \varepsilon] \cap \{\lambda_1^n\}_{n \in \BZ}$ such that $\partial B_r(x) \subset A_{\lambda_1s,s}^\bullet(x; D_{\Phi_1})$ for some $r > 0$ and
    \begin{equation*}
        M^{-1} \le \Mod(A_{\lambda_2t,t}^\bullet(y; D_{\Phi_2})) \le M, \quad \forall t \in \{\lambda_2^2\iota(s), \lambda_2\iota(s), \iota(s), \lambda_2^{-1}\iota(s)\}. 
    \end{equation*}
    By \Cref{lem:Teichmueller}, we have
    \begin{gather*}
        \frac{\inf\{R > 0 : B_{\iota(s)}^\bullet(y; D_{\Phi_2}) \subset B_R(y)\}}{\sup\{R > 0 : B_R(y) \subset B_{\lambda_2^{-1}\iota(s)}^\bullet(y; D_{\Phi_2})\}} \le \frac1{16}\exp\!\left(\frac\pi{2\Mod(A_{\iota(s),\lambda_2^{-1}\iota(s)}^\bullet(y; D_{\Phi_2}))}\right) \le \re^{\pi M/2}/16; \\
        \frac{\inf\{R > 0 : B_{\lambda_2^3\iota(s)}^\bullet(y; D_{\Phi_2}) \subset B_R(y)\}}{\sup\{R > 0 : B_R(y) \subset B_{\lambda_2^2\iota(s)}^\bullet(y; D_{\Phi_2})\}} \le \frac1{16}\exp\!\left(\frac\pi{2\Mod(A_{\lambda_2^3\iota(s),\lambda_2^2\iota(s)}^\bullet(y; D_{\Phi_2}))}\right) \le \re^{\pi M/2}/16; \\
        \frac{\sup\{R > 0 : B_R(y) \subset B_{\lambda_2^{-1}\iota(s)}^\bullet(y; D_{\Phi_2})\}}{\inf\{R > 0 : B_{\lambda_2^3\iota(s)}^\bullet(y; D_{\Phi_2}) \subset B_R(y)\}} \le \exp\!\left(2\pi\Mod(A_{\lambda_2^3\iota(s),\lambda_2^{-1}\iota(s)}^\bullet(y; D_{\Phi_2}))\right) \le \re^{8\pi M}. 
    \end{gather*}
    This implies that
    \begin{align*}
        \frac{\inf\{R > 0 : B_{\iota(s)}^\bullet(y; D_{\Phi_2}) \subset B_R(y)\}}{\sup\{R > 0 : B_R(y) \subset B_{\lambda_2^2\iota(s)}^\bullet(y; D_{\Phi_2})\}} &= \frac{\inf\{R > 0 : B_{\iota(s)}^\bullet(y; D_{\Phi_2}) \subset B_R(y)\}}{\sup\{R > 0 : B_R(y) \subset B_{\lambda_2^{-1}\iota(s)}^\bullet(y; D_{\Phi_2})\}} \\
        &\times \frac{\sup\{R > 0 : B_R(y) \subset B_{\lambda_2^{-1}\iota(s)}^\bullet(y; D_{\Phi_2})\}}{\inf\{R > 0 : B_{\lambda_2^3\iota(s)}^\bullet(y; D_{\Phi_2}) \subset B_R(y)\}} \\
        &\times \frac{\inf\{R > 0 : B_{\lambda_2^3\iota(s)}^\bullet(y; D_{\Phi_2}) \subset B_R(y)\}}{\sup\{R > 0 : B_R(y) \subset B_{\lambda_2^2\iota(s)}^\bullet(y; D_{\Phi_2})\}} \\
        &\le \re^{\pi M/2}/16 \times \re^{8\pi M} \times \re^{\pi M/2}/16. 
    \end{align*}
    Set $H \defeq \re^{\pi M/2}/16 \times \re^{8\pi M} \times \re^{\pi M/2}/16$. Thus, we conclude that
    \begin{equation*}
        f(\partial B_r(x)) \subset f(A_{\lambda_1s,s}^\bullet(x; D_{\Phi_1})) \subset A_{\lambda_2^2\iota(s),\iota(s)}^\bullet(y; D_{\Phi_2}) \subset A_{r^\prime,Hr^\prime}(y)
    \end{equation*}
    for some $r^\prime > 0$. Since $r \to 0$ as $\varepsilon \to 0$, we conclude that
    \begin{equation*}
        \liminf_{r \to 0} \frac{\sup\{\lVert f(x) - f(z)\rVert : z \in \partial B_r(x)\}}{\inf\{\lVert f(x) - f(z)\rVert : z \in \partial B_r(x)\}} \le H, \quad \forall x \in B_1^\bullet(0; D_{\Phi_1}). 
    \end{equation*}
    We note that the constant $H$ depends only on $M$, which depends only on $\lambda_2$ and $b$. Moreover, $b$ depends only on $\eta$, $\lambda_1$, and $\lambda_2$, while $\lambda_1$ is a universal constant and $\lambda_2$ depends only on $\eta$ and $\lambda_1$. This completes the proof. 
\end{proof}

\subsection{Variants of \Cref{lem:quasiconformality}}

In the present subsection, we prove a whole-plane GFF variant of \Cref{lem:quasiconformality} (cf.~\Cref{lem:quasiconformality-GFF-GFF}). This variant will be useful in \Cref{section:main-proof}.

\begin{corollary}\label{lem:quasiconformality-cone-cone}
    For each distortion function $\eta$, there exists $K = K(\eta) \ge 1$ such that the following is true: Let $(\BC, \Phi_1; 0, \infty)$ and $(\BC, \Phi_2; 0, \infty)$ be a coupled pair of $\sqrt{8/3}$-LQG cones. Then, almost surely, for every open subset $U \subset \BC$, any $\eta$-quasisymmetric embedding of $(U, D_{\Phi_1})$ into $(\BC, D_{\Phi_2})$ is $K$-quasiconformal.
\end{corollary}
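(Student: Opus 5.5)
We reduce the corollary to \Cref{lem:quasiconformality} using three facts: $K$-quasiconformality is local, the cone is invariant under rescaling, and it is invariant under re-rooting along the space-filling SLE. Since $K$-quasiconformality is a local property, it suffices to show the following. Almost surely, for every $x\in U$ there is a neighbourhood $V\ni x$ such that every $\eta$-quasisymmetric embedding $f$ of $(U,D_{\Phi_1})$ into $(\BC,D_{\Phi_2})$ is $K$-quasiconformal on $V$. The restriction of $f$ to any subset is still $\eta$-quasisymmetric. So it is enough to cover $U$ by filled metric balls $B^\bullet_\rho(z;D_{\Phi_1})$ contained in $U$, whose interiors contain the points of interest. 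We also need the following: almost surely, every $\eta$-quasisymmetric embedding of such a filled ball into $(\BC,D_{\Phi_2})$ is $K$-quasiconformal on the interior of the ball, with $K=K(\eta)$ from \Cref{lem:quasiconformality}. Because $U$ is an arbitrary (random or not) open set, the balls must come from a countable family chosen in advance, not from $U$ itself.

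The first step is to construct this family. Let $\eta'$ be a whole-plane space-filling SLE$_6$ independent of $(\Phi_1,\Phi_2)$, parameterized by $\SM_{\Phi_1}$. For each rational time $q$, $(\BC,\Phi_1;\eta'(q),\infty)$ is a $\sqrt{8/3}$-LQG cone, and the coupling with $\Phi_2$ is unaffected. Note that \Cref{lem:quasiconformality} is stated for an arbitrary coupling of two cones with possibly different marked points. This is harmless, since the proof never uses that $0$ is the root for $\Phi_2$ except through \Cref{lem:net}, which covers the image region. Alternatively, one can translate and rescale: because $D_{\Phi_2}$-balls around any point are what matter, we can apply the statement to the cone $(\BC,\Phi_2;0,\infty)$ with the target space $(\BC,D_{\Phi_2})$ unchanged.

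The second step is to rescale. For each rational $\rho>0$, the scaling invariance of the cone (coordinate change plus Weyl scaling: multiplying the metric by $\rho^{-1}$ amounts to adding a constant to the field and applying a dilation) gives the following. The space $(B^\bullet_\rho(\eta'(q);D_{\Phi_1}),\rho^{-1}D_{\Phi_1})$ is the unit filled ball of a cone re-embedded at $\eta'(q)$, and $\rho^{-1}D_{\Phi_2}$ is still the metric of a cone. Quasisymmetry with distortion $\eta$ is invariant under multiplying either metric by a constant. Euclidean quasiconformality is invariant under affine changes of coordinates. Therefore \Cref{lem:quasiconformality}, applied to the countably many pairs $(q,\rho)\in\BQ\times\BQ_{>0}$, yields a single almost sure event on which the claim holds for all these balls at once.

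The remaining point, which I expect to be the main obstacle, is to check that these balls cover every open $U$ with interiors. Given $x\in U$, pick $\delta$ with $B_{3\delta}(x;D_{\Phi_1})\subset U$ and $B^\bullet_{2\delta}(x;D_{\Phi_1})\subset U$. The second inclusion holds for small $\delta$ because filled balls shrink to points: the complement of the unbounded component is contained in a small Euclidean disk. Since $\eta'$ is space-filling and continuous, $\{\eta'(q)\}_{q\in\BQ}$ is dense, so we can pick $q$ with $D_{\Phi_1}(x,\eta'(q))<\delta/2$. Then take rational $\rho\in(\delta/2,\delta)$. In that case $x\in B_\rho(\eta'(q);D_{\Phi_1})$, which is an open set contained in $B^\bullet_\rho(\eta'(q))\subset B^\bullet_{2\delta}(x)\subset U$. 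This gives the required neighbourhood, and by locality $f$ is $K$-quasiconformal on all of $U$.
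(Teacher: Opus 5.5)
Your proposal is correct and follows the paper's proof. You re-root $\Phi_1$ at $\eta'(q)$ for rational $q$ using an independent space-filling SLE$_6$, apply \Cref{lem:quasiconformality} together with scaling to the countably many filled balls $B^\bullet_\rho(\eta'(q);D_{\Phi_1})$, and conclude by locality of $K$-quasiconformality. Your explicit covering argument and your remark that only the source coordinates need translating (the target $(\BC,D_{\Phi_2})$ is left untouched) spell out details the paper leaves implicit.
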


\begin{proof}
    Let $\eta^\prime \colon \BR \to \BC$ be an independent whole-plane space-filling SLE$_{\kappa^\prime}$ curve from $\infty$ to $\infty$, parameterized by the LQG area measure $\SM_{\Phi_1}$. Recall that for each deterministic $t \in \BR$, the $\sqrt{8/3}$-LQG surface $(\BC, \Phi_1; \eta^\prime(t), \infty)$ has the law of a $\sqrt{8/3}$-LQG cone. Thus, by \Cref{lem:quasiconformality} and the scaling property, almost surely, for each $t \in \BQ$ and $s \in \BQ_{>0}$, every $\eta$-quasisymmetric embedding of $(B_s^\bullet(\eta^\prime(t); D_{\Phi_1}), D_{\Phi_1})$ into $(\BC, D_{\Phi_2})$ is $K$-quasiconformal. Since for each $z \in U$, there exists $t \in \BQ$ and $s \in \BQ_{>0}$ such that $z \in B_s^\bullet(\eta^\prime(t); D_{\Phi_1}) \subset U$, this implies that any $\eta$-quasisymmetric embedding of $(U, D_{\Phi_1})$ into $(\BC, D_{\Phi_2})$ is $K$-quasiconformal. (Here, we recall the fact that $K$-quasiconformality is a local property.) This completes the proof. 
\end{proof}

\begin{corollary}\label{lem:quasiconformality-cone-GFF}
    For each distortion function $\eta$, there exists $K = K(\eta) \ge 1$ such that the following holds. Let $\Phi_1$ be a whole-plane GFF with fixed additive constant. Let $(\BC, \Phi_2; 0, \infty)$ be a $\sqrt{8/3}$-LQG cone, coupled with $\Phi_1$. Then, almost surely, for every open subset $U \subset \BC$, every $\eta$-quasisymmetric embedding of $(U, D_{\Phi_1})$ into $(\BC, D_{\Phi_2})$ is $K$-quasiconformal. The same statement holds with the roles of $\Phi_1$ and $\Phi_2$ interchanged.
\end{corollary}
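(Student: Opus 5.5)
The plan is to deduce \Cref{lem:quasiconformality-cone-GFF} from \Cref{lem:quasiconformality-cone-cone} by local absolute continuity between the whole-plane GFF and the $\sqrt{8/3}$-LQG cone away from the origin. The first observation is about the additive constant in $\Phi_1$. Changing it multiplies $D_{\Phi_1}$ by a positive constant, by Weyl scaling. Quasisymmetry is invariant under scaling of either metric, with the same $\eta$. So the statement does not depend on how the additive constant is fixed.

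Next I would localize the statement. Since $K$-quasiconformality is a local property, it suffices to prove the following. Fix deterministic rational discs $V = B_\rho(w)$ with $\overline V \subset \BC \setminus \{0\}$. It is enough to show that almost surely every $\eta$-quasisymmetric embedding of $(U \cap V, D_{\Phi_1})$ into $(\BC, D_{\Phi_2})$, for every open $U$, is $K$-quasiconformal. The point $0$ itself can be covered by using discs with centres in a second countable family avoiding a different point: translate $\Phi_1$ by a deterministic shift, which preserves the law of the whole-plane GFF modulo constants. One subtlety is the internal versus restricted metric. An $\eta$-quasisymmetric embedding of $(U, D_{\Phi_1})$ restricts to one of $(U \cap V', D_{\Phi_1})$ for smaller discs $V' \Subset V$. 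For points of $V'$ close to each other, $D_{\Phi_1}$ agrees with the internal metric $D_{\Phi_1}(\bullet,\bullet;V)$. Hence locally the restricted map is quasisymmetric with respect to the internal metric with a modified but controlled distortion. More robustly, the argument of \Cref{lem:quasiconformality} only uses small filled balls $B^\bullet_s(x;D_{\Phi_1})$ with $s$ tiny, which lie inside $V$ and are determined by $\Phi_1|_V$ by locality. So the same $\eta$ works.

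The main step is the absolute continuity transfer. Let $\widetilde\Phi$ be a $\sqrt{8/3}$-LQG cone. Away from $0$, its circle-average-embedded field restricted to $V$ is mutually absolutely continuous with $\Phi_1|_V$ modulo additive constants. This is the cone analogue of \Cref{lem:three-pointed-sphere}: the projection onto $H^{\mathrm{circ}}$ is identical, and the radial part on the range of radii meeting $V$ is Brownian motion with drift versus Brownian motion, which are mutually absolutely continuous on a compact interval of $\log$-radii. Consider the event
\[
\mathcal E = \{\text{some } \eta\text{-quasisymmetric embedding of } (U\cap V, D_{\Phi_1}) \text{ into } (\BC, D_{\Phi_2}) \text{ fails to be } K\text{-qc}\}.
\]
I would condition on $\Phi_2$ and view $\mathcal E$ as a measurable property of $\Phi_1|_V$ for fixed $\Phi_2$. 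This is legitimate because, by locality, the relevant metric structure of $\Phi_1$ near $V$ is determined by $\Phi_1|_V$.

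To handle the arbitrary coupling, I would argue as follows. By \Cref{lem:quasiconformality-cone-cone} applied to the coupling in which $\widetilde\Phi$ is independent of $\Phi_2$, for $\BP_{\Phi_2}$-a.e.\ fixed $\Phi_2$ the property fails only on a null set of $\widetilde\Phi|_V$. Since the proof of \Cref{lem:quasiconformality} is valid for all couplings, it in fact shows that the bad set $B(\Phi_2)$ of field configurations on $V$ is null under the cone law for a.e.\ $\Phi_2$. By absolute continuity, $B(\Phi_2)$ is then null under the GFF law too. For an arbitrary coupling, the set $\{(\phi_1,\phi_2): \phi_1 \in B(\phi_2)\}$ has zero probability under the product measure. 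This alone does not suffice for a general coupling, so \textbf{this is the main obstacle}. Rather than Fubini, I would therefore rerun the proof of \Cref{lem:quasiconformality} directly. That proof only used two things. The first is almost sure statements about each field separately: \Cref{lem:net} and \Cref{lem:quasiconformality-proof} via Borel--Cantelli. The second is deterministic geometric facts. Hence it suffices to verify those single-field almost sure statements for $\Phi_1|_V$ (small filled balls around points of $V$), and these transfer from the cone by absolute continuity since they are events measurable with respect to $\Phi_1|_V$. Roles interchanged follow identically.
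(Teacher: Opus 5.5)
Your final argument is correct, but it takes a different route from the paper's.

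\textbf{The paper's route.} The paper never reopens the proof of \Cref{lem:quasiconformality}. It reduces to $U = B_{1/4}(1/2)$ and normalizes $\Phi_1$ to have zero circle average on $\partial\BD$. It then uses that the circle-average embedding $\widetilde\Phi_1$ of a cone satisfies $\widetilde\Phi_1|_\BD \overset{d}{=} \Phi_1|_\BD - \sqrt{8/3}\log\lVert\bullet\rVert$ \emph{exactly}. This identity in law up to a deterministic shift lets the given coupling $(\Phi_1,\Phi_2)$ be enlarged so that $\widetilde\Phi_1|_\BD = \Phi_1|_\BD - \sqrt{8/3}\log\lVert\bullet\rVert$ almost surely. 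Then $(\widetilde\Phi_1,\Phi_2)$ is a coupling of two cones, and \Cref{lem:quasiconformality-cone-cone} applies as a black box. The two fields differ by a bounded deterministic function on $B_{1/4}(1/2)$, so Weyl scaling makes the metrics locally bi-Lipschitz. Hence $f$ is locally $\widetilde\eta(\eta)$-quasisymmetric for $D_{\widetilde\Phi_1}$. The interchanged statement follows by passing to $f^{-1}$. This is precisely how the paper avoids the obstacle you flagged: mere absolute continuity would not let one enlarge an arbitrary coupling, but an exact identity in law up to a deterministic shift does.

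\textbf{Your route.} You observe that the proof of \Cref{lem:quasiconformality} is a deterministic argument fed by two single-field almost sure multiscale modulus estimates, one per field (from \Cref{lem:net}, \Cref{lem:quasiconformality-proof} and Borel--Cantelli). It is therefore automatically valid for every coupling, and it suffices to verify those estimates for the GFF. They are local events, so they transfer from the cone by absolute continuity.

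\textbf{What each buys.} Your route needs only absolute continuity, not an exact law identity. It keeps the same constant $K(\eta)$ with no passage to $\widetilde\eta$, treats both role assignments symmetrically, and yields \Cref{lem:quasiconformality-GFF-GFF} at once. The price is reopening the proof and checking that the single-field events are measurable and determined by $\Phi|_V$; this holds because small filled balls around points of $V' \Subset V$ lie in $V$. The conditioning/Fubini detour in your third paragraph is unnecessary and can be dropped.

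\textbf{One point to tighten.} The single-field events are phrased on the grid $\{\lambda^n\}_{n\in\BZ}$ of metric radii. Adding a constant to the field rescales $D_\Phi$ and shifts this grid, so the events are not invariant under additive constants. Absolute continuity \emph{modulo} constants is therefore not quite what you need. Moreover, for radii larger than $1$ the radial part of the cone is a conditioned process, which is comparable to Brownian motion only modulo constants, not simply ``Brownian motion with drift''.

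Both issues disappear with a normalization step. Use scale and translation invariance of $\Phi_1$ to place $V \Subset \BD\setminus\{0\}$, and normalize $\Phi_1$ to have zero circle average on $\partial\BD$. This is legitimate, since you already observed the conclusion does not depend on the normalization. There the cone field is exactly $\Phi_1 - \sqrt{8/3}\log\lVert\bullet\rVert$ with a smooth deterministic difference, so the restricted fields themselves have mutually absolutely continuous laws.
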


\begin{proof}
    First, we consider the first portion of the assertion. Since $K$-quasiconformality is a local property, it suffices to consider the case where $U = B_r(z)$ for some $z \in \BQ^2$ and $r \in \BQ_{>0}$. Note that, by the Weyl scaling axiom, the additive constant of $\Phi_1$ affects neither $\eta$-quasisymmetry nor $K$-quasiconformality. Thus, by the scale and translation invariance of the whole-plane GFF, it suffices to consider the case where $U = B_{1/4}(1/2)$. 
    
    We may now assume without loss of generality that $\Phi_1$ is normalized so that its circle average over $\partial\BD$ is zero. Let $\widetilde\Phi_1$ be the circle average embedding of a $\sqrt{8/3}$-LQG cone. Recall that $\widetilde\Phi_1|_\BD$ agrees in law with $\Phi_1|_\BD - \sqrt{8/3}\log(\lVert\bullet\rVert)$. Suppose that $\Phi_1$, $\widetilde\Phi_1$, and $\Phi_2$ are coupled so that $\widetilde\Phi_1|_\BD = \Phi_1|_\BD - \sqrt{8/3}\log(\lVert\bullet\rVert)$ almost surely. Since $\lVert\widetilde\Phi_1 - \Phi_1\rVert$ is bounded above by a universal constant on $B_{1/4}(1/2)$, it follows that the internal metrics $D_{\widetilde\Phi_1}(\bullet,\bullet; B_{1/4}(1/2))$ and $D_{\Phi_1}(\bullet,\bullet; B_{1/4}(1/2))$ are almost surely bi-Lipschitz equivalent with universal constants.
    
    Let $f$ be an $\eta$-quasisymmetric embedding of $(B_{1/4}(1/2), D_{\Phi_1})$ into $(\BC, D_{\Phi_2})$. For each $z \in B_{1/4}(1/2)$, there exists a sufficiently small $r > 0$ such that the restrictions $D_{\widetilde\Phi_1}(\bullet, \bullet; B_{1/4}(1/2))|_{B_r(z) \times B_r(z)}$ and $D_{\Phi_1}(\bullet, \bullet; B_{1/4}(1/2))|_{B_r(z) \times B_r(z)}$ agree with $D_{\widetilde\Phi_1}(\bullet, \bullet)|_{B_r(z) \times B_r(z)}$ and $D_{\Phi_1}(\bullet, \bullet)|_{B_r(z) \times B_r(z)}$, respectively. In this case, $D_{\widetilde\Phi_1}(\bullet, \bullet)|_{B_r(z) \times B_r(z)}$ and $D_{\Phi_1}(\bullet, \bullet)|_{B_r(z) \times B_r(z)}$ are bi-Lipschitz equivalent with universal constants, which implies that $f|_{B_r(z)}$ is $\widetilde\eta$-quasisymmetric with respect to $D_{\widetilde\Phi_1}$ and $D_{\Phi_2}$ for some distortion function $\widetilde\eta = \widetilde\eta(\eta)$. Thus, by \Cref{lem:quasiconformality-cone-cone}, there exists $\widetilde K = \widetilde K(\widetilde\eta) \ge 1$ such that $f|_{B_r(z)}$ is $\widetilde K$-quasiconformal. Since $z \in B_{1/4}(1/2)$ was arbitrary, we conclude that $f$ is $\widetilde K$-quasiconformal. This completes the proof of the first portion of the assertion. 

    The second portion of the assertion follows immediately from the first portion, together with the facts that the inverse of an $\eta$-quasisymmetric homeomorphism is $1/\eta^{-1}(\bullet^{-1})$-quasisymmetric, and that the inverse of a $K$-quasiconformal homeomorphism is also $K$-quasiconformal.
\end{proof}

\begin{corollary}\label{lem:quasiconformality-GFF-GFF}
    For each distortion function $\eta$, there exists $K = K(\eta) \ge 1$ such that the following is true: Let $\Phi_1$ and $\Phi_2$ be a coupled pair of whole-plane GFFs with fixed additive constants. Then, almost surely, for every open subset $U \subset \BC$, every $\eta$-quasisymmetric embedding of $(U, D_{\Phi_1})$ into $(\BC, D_{\Phi_2})$ is $K$-quasiconformal.
\end{corollary}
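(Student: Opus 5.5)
The plan is to reduce \Cref{lem:quasiconformality-GFF-GFF} to \Cref{lem:quasiconformality-cone-GFF} by inserting an auxiliary $\sqrt{8/3}$-LQG cone $\Phi_3$ between the two fields. The naive route, composing through a cone, is unavailable: we are not given a quasisymmetric map into a cone. Instead, I would run the localization argument of \Cref{lem:quasiconformality-cone-GFF} on the \emph{target} side.

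\textbf{Step 1 (reduction to small balls).} Quasiconformality is local, so it suffices to fix $z \in U$ and work on a small ball $B_\rho(z) \subset U$. Take $\rho$ so small that $f(B_\rho(z)) \subset B_{r}(w)$ for some $w \in \BQ^2$ and $r \in \BQ_{>0}$ with $B_{4r}(w)$ chosen freely; this uses continuity of $f$ in the Euclidean topology, which $D_{\Phi_j}$ induces. Since the family of such target balls is countable, it suffices to prove the following: almost surely, for each fixed rational target ball, every $\eta$-quasisymmetric embedding of an open set into it is $K$-quasiconformal. By the Weyl scaling axiom, additive constants are irrelevant. By translation and scale invariance of $\Phi_2$ (modulo constants), we may take the target ball to be $B_{1/4}(1/2)$.

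\textbf{Step 2 (swap the target field for a cone).} As in the proof of \Cref{lem:quasiconformality-cone-GFF}, normalize $\Phi_2$ to have zero circle average on $\partial\BD$. Couple a circle-average-embedded cone $\widetilde\Phi_2$ with $\Phi_2$ (and $\Phi_1$) so that $\widetilde\Phi_2|_\BD = \Phi_2|_\BD - \sqrt{8/3}\log\lVert\bullet\rVert$. On $B_{1/4}(1/2)$ the difference is bounded by a universal constant. So near each point of $B_{1/4}(1/2)$, where internal metrics agree with the global ones, $D_{\Phi_2}$ and $D_{\widetilde\Phi_2}$ are bi-Lipschitz with universal constants. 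Consequently, after possibly shrinking $\rho$ again so that $f(B_\rho(z))$ sits in such a neighbourhood, $f|_{B_\rho(z)}$ is $\widetilde\eta$-quasisymmetric from $(B_\rho(z), D_{\Phi_1})$ into $(\BC, D_{\widetilde\Phi_2})$, with $\widetilde\eta$ depending only on $\eta$.

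\textbf{Step 3 (conclude).} Apply \Cref{lem:quasiconformality-cone-GFF} to the pair $(\Phi_1, \widetilde\Phi_2)$: a whole-plane GFF coupled with a $\sqrt{8/3}$-LQG cone, with an arbitrary coupling allowed. This gives that $f|_{B_\rho(z)}$ is $\widetilde K(\widetilde\eta)$-quasiconformal. Since $z$ was arbitrary, $f$ is $K$-quasiconformal with $K = K(\eta)$.

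\textbf{Main obstacle.} The delicate point is the order of quantifiers. The almost-sure event must be fixed before $f$ is chosen, yet the relevant target ball depends on $f$. This is handled by intersecting, over the countably many rational balls $B_r(w)$, the almost-sure events supplied by \Cref{lem:quasiconformality-cone-GFF}, applied to the rescaled and translated couplings. A second point to check is that the bi-Lipschitz comparison only holds for the internal metrics on the ball, so the quasisymmetry transfer must be done on a neighbourhood small enough that global and internal metrics coincide. This is exactly the same issue already handled in the proof of \Cref{lem:quasiconformality-cone-GFF}.
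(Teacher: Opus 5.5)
Your argument is correct and is essentially the paper's proof with the roles of source and target swapped. The paper localizes on the source side: it replaces $\Phi_1$ near each point by a circle-average-embedded cone, whose metric is bi-Lipschitz comparable with universal constants, and then invokes the second, cone-into-GFF portion of \Cref{lem:quasiconformality-cone-GFF}; you localize on the target side and invoke the first, GFF-into-cone portion directly, which skips the inversion step behind that second portion at the cost of a rational target ball that depends on $f$, and you handle this correctly by intersecting the countably many almost-sure events (together with the coordinate-change events for the rescalings) before $f$ is chosen.
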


\begin{proof}
    This follows immediately from the second portion of \Cref{lem:quasiconformality-cone-GFF}, together with a similar argument to the argument used in the proof of the first portion of \Cref{lem:quasiconformality-cone-GFF}.
\end{proof}

\section{Proof of the main results}\label{section:main-proof}

In the present section, we complete the proof of the main results, \Cref{thm:main-ind,thm:main-aut}. The main input is \Cref{lem:nonexistence}, which in turn follows from \Cref{lem:matrix}.

\begin{proposition}\label{lem:nonexistence}
    Let $\Phi_1$ and $\Phi_2$ be independent whole-plane GFFs with fixed additive constants. Then, almost surely, there is no quasisymmetric embedding of $(\BD, D_{\Phi_1})$ into $(\BC, D_{\Phi_2})$.
\end{proposition}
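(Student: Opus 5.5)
We argue by contradiction. Suppose that with positive probability there exists a quasisymmetric embedding $f$ of $(\BD, D_{\Phi_1})$ into $(\BC, D_{\Phi_2})$. Since a countable family of distortion functions $\eta_k(t) = k(t^{1/k} \vee t^k)$ dominates every distortion function, we may fix $\eta$ such that, with positive probability, an $\eta$-quasisymmetric embedding exists. By \Cref{lem:quasiconformality-GFF-GFF}, every such $f$ is $K$-quasiconformal for some $K = K(\eta)$. Hence $f$ is differentiable with invertible Jacobian at Lebesgue almost every point of $\BD$.

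The key idea is to blow up at a Lebesgue typical point. Near such a point $z$, rescaling $f$ by $f_r(w) \defeq (f(z + rw) - f(z))/r$ gives maps converging locally uniformly to an invertible real-linear map $L_z$. At the same time, the rescaled fields $\Phi_1(z + r\bullet)$ and $\Phi_2(f(z) + r\bullet)$, taken modulo additive constants, are again whole-plane GFFs in law. Since the additive constants do not affect quasisymmetry, each $f_r$ is an $\eta$-quasisymmetric embedding between these rescaled GFF metrics.

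We then derive the contradiction from independence. We aim to construct a ``local rigidity event'' $G$ with the following property. Let $\Psi_1$ and $\Psi_2$ be independent whole-plane GFFs, and let $L$ range over a compact set of linear maps with bounded distortion. On $G$, no $\eta$-quasisymmetric embedding of $(B_1(0), D_{\Psi_1})$ into $(\BC, D_{\Psi_2})$ is uniformly $\delta$-close to any such $L$ on $B_1(0)$.

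To construct $G$, we use the randomness of $\Psi_2$ inside a small ball that contains the image of some metric ball of $\Psi_1$. For instance, we can make the $\Psi_2$-metric very distorted in one small Euclidean ball $B$ while keeping it regular in a neighbouring ball of comparable size. We arrange the $\Psi_1$-metric to be regular in the corresponding preimages. Because the image of a near-linear map has controlled Euclidean shape, this forces a ratio $D_{\Psi_2}(f(x),f(y))/D_{\Psi_2}(f(x),f(w))$ to be much larger than $\eta$ of the corresponding $\Psi_1$-ratio. Independence of $\Psi_1$ and $\Psi_2$ guarantees that such configurations occur with positive probability $p$, uniformly in $L$ after a finite net argument over $L$.

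Next, we use \Cref{lem:independence-across-scales} for both fields simultaneously, working on annuli around $z$ and $f(z)$ at scales $r_j = \lambda^j$. This shows that $G$ occurs at a positive fraction of scales with overwhelming probability. The catch is that $f(z)$ is random and depends on both fields. We handle this by a union bound over a polynomial-size net of candidate points $z$ and $w \approx f(z)$, exploiting the exponential decay in \Cref{lem:independence-across-scales}, as in the proof of \Cref{lem:quasiconformality-proof}. Near-linearity at a Lebesgue point gives $\delta$-closeness to $L_z$ at all small scales, so some scale realizes $G$ while $f$ is $\delta$-close to linear. This contradicts $\eta$-quasisymmetry.

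The main obstacle is the construction of $G$ and the verification that it is robust: it must rule out all $\eta$-quasisymmetric maps that are merely close to linear, uniformly over the linear map and the base point. Making the union bound compatible with the random target point $f(z)$ is also delicate. One option is to use \Cref{lem:independence-across-balls} on many well-separated candidate target balls, to boost the probability that $G$ holds near whatever point $f(z)$ turns out to be.
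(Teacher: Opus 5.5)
Your outline matches the paper's. You fix $\eta$, get $K$-quasiconformality from \Cref{lem:quasiconformality-GFF-GFF}, and linearize at a point of differentiability. You then look for a configuration of $D_{\Phi_2}$-distances near the image point that a nearly linear $\eta$-quasisymmetric map cannot realize, using independence across scales and a union bound over a net of target points.

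Where you diverge is the source side. You require $\Phi_1$ to be regular and $\Phi_2$ distorted at the same scale, so every estimate becomes a two-field estimate and you must union bound over pairs $(z,w)$. The paper instead samples $\xx$ uniformly and independently and conditions on $(\Phi_1,\xx)$. This makes the source ratios at $\xx + A^{-1}(y^{j,\bullet}_{0,\rr})$, and hence the admissible intervals $I^j_\rr$, deterministic. Its $\Phi_2$-events $E_{z,r,I}$ have probability bounded below uniformly over \emph{every} admissible interval (\Cref{lem:E-properties},~\eqref{it:E-properties-2}, using that no admissible interval meets both $[0,M_1]$ and $[M_2,\infty)$). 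So no regularity of $\Phi_1$ is needed, only single-field lemmas for $\Phi_2$ enter, and there is no union bound over source points. The unknown derivative, including its size (which is not confined to a fixed compact set), is handled by a countable cover of $\mathrm{GL}(2,\BR)$. The source test points are placed exactly via $A^{-1}$ and \Cref{lem:linearization} absorbs the error, so no uniformity in $L$ is needed.

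\textbf{The gap.} The step that fails as written is the passage from ``$G$ has positive probability $p$'' to ``$G$ occurs at some small scale for all net points simultaneously''. \Cref{lem:independence-across-scales} requires per-scale probability above a threshold $p(\lambda,\alpha,b)$, which must be close to $1$ when $\alpha$ is large, and this is not a technicality. Even with exactly independent scales, success probability $p$ per scale gives failure at all $\asymp\log_{1/\lambda}(1/\varepsilon)$ scales in $[\varepsilon^2,\varepsilon]$ with probability about $\varepsilon^{c}$, where $c=\log(1/(1-p))/\log(1/\lambda)$. For $p$ not close to $1$ this cannot beat a union bound over a polynomial-size net of pairs, of order $\varepsilon^{-12}$ at resolution $\varepsilon^3$. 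Widening the range of scales does not help, because the net must be refined along with the smallest scale.

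The missing ingredient is the boosting of \Cref{lem:F-properties}. You place $\lfloor a^{-1}\rfloor$ well-separated copies of the test configuration around each annulus and use \Cref{lem:independence-across-balls} to make the per-scale failure probability arbitrarily small, so $\alpha$ can be as large as the net requires. You mention \Cref{lem:independence-across-balls} only as a way to cope with the randomness of $f(z)$, which the net already handles; its real role is this boosting. In your two-field setup, both it and \Cref{lem:independence-across-scales} would also have to be re-established for the independent pair $(\Phi_1,\Phi_2)$, with configurations near $z$ and near $w$. That is plausible, but it is exactly the work the paper's conditioning avoids.

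Two smaller points. First, the functions $k(t^{1/k}\vee t^k)$ do not dominate every distortion function (consider $t\mapsto\exp(t)-1$). What you need is that quasisymmetric maps of connected spaces are power quasisymmetric \cite[Theorem~11.3]{MR1800917}. Second, your events must control the global metrics between configuration points using only the field nearby, i.e.\ exclude geodesics that leave the region, as condition~\eqref{it:def-E-1} in the definition of $E_{z,r,I}$ does.
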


\begin{lemma}\label{lem:matrix}
    In the notation of \Cref{lem:nonexistence}, let $\xx$ be sampled uniformly from the Lebesgue measure on $\BD$, independently of everything else. Fix a distortion function $\eta$. Then, for each invertible matrix $A \in \mathop{\mathrm{GL}}(2,\BR)$, there exists $\delta = \delta(\eta,A) > 0$ such that, almost surely, there is no $\eta$-quasisymmetric embedding $f$ from $(\BD, D_{\Phi_1})$ into $(\BC, D_{\Phi_2})$ for which $Df(\xx)$ exists, is invertible, and satisfies $\lVert Df(\xx) - A\rVert \le \delta$, where $\lVert\bullet\rVert$ denotes the spectral norm.
\end{lemma}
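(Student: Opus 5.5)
We argue by blowing up at $\xx$ and using independence across many disjoint scales. Suppose $f$ is an $\eta$-quasisymmetric embedding with $Df(\xx)$ invertible and $\lVert Df(\xx) - A\rVert \le \delta$. Set $\yy \defeq f(\xx)$. As $r \to 0$, the rescaled maps $f_r(w) \defeq (f(\xx + rw) - \yy)/r$ converge to $Df(\xx)$ uniformly on $\overline\BD$. Hence, for all sufficiently small $r$, the map $f$ sends $B_r(\xx)$ into a $2\delta r$-neighbourhood of $\yy + A(B_r(0) - 0)$. Quasisymmetry is invariant under rescaling the metrics. So, after Weyl-scaling both fields by their circle averages at scale $r$, $f$ induces an $\eta$-quasisymmetric map between the internal metrics of the two normalized fields near $\xx$ and $\yy$, and this map is $C^0$-close to the linear map $A$.

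The key step is to exhibit a deterministic event $F_r$, depending only on $\Phi_1|_{B_r(\xx)}$ and $\Phi_2|_{A(B_{2r}(0)) + \yy}$ viewed modulo additive constants, with two properties. First, the probability of $F_r$ is at least some $p > 0$, uniformly in $r$ and in the location, by scale and translation invariance. Second, $F_r$ is incompatible with the existence of any $\eta$-quasisymmetric map that is $\delta r$-close to $w \mapsto \yy + A(w - \xx)$ on $B_r(\xx)$. I would construct $F_r$ as follows. Fix three points $a, b, c$ in $B_{r/2}(\xx)$ at unit-scale separations. On $F_r$, the ratio $D_{\Phi_1}(a,b)/D_{\Phi_1}(a,c)$ is extremely small: make $\Phi_1$ very negative near the segment $[a,b]$ via a bump, and use Weyl scaling. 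At the same time, the ratio $D_{\Phi_2}(\yy + A(a - \xx), \yy + A(b - \xx))/D_{\Phi_2}(\yy + A(a - \xx), \yy + A(c - \xx))$ is extremely large: make $\Phi_2$ very positive in a Euclidean annulus around the image of $a$, of radius smaller than $\lvert A(b - a)\rvert$, and negative elsewhere. Then continuity of $f$ and closeness to $A$ give $f(a), f(b), f(c)$ within $\delta r$ of their linear images. If $\delta$ is small relative to $\lVert A^{-1}\rVert^{-1}$ and the bump scales, the $D_{\Phi_2}$-ratio of the images still exceeds $\eta(\text{the } D_{\Phi_1}\text{-ratio})$, which contradicts $\eta$-quasisymmetry. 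Positivity of $\BP[F_r]$ should follow from the Cameron--Martin property, or equivalently from the absolute continuity of the GFF under the addition of smooth bumps. One has to check that the metric ratios are controlled only by fields restricted to these balls; this uses locality and the fact that the relevant geodesics stay inside the balls on the chosen event.

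Next, fix the dyadic scales $r_k = 2^{-k}$. Since $\xx$ is independent of $(\Phi_1, \Phi_2)$ and Lebesgue-distributed, condition on $\xx$. By \Cref{lem:independence-across-scales}, applied to $\Phi_1$ on the annuli around $\xx$ and, by independence, to $\Phi_2$ on annuli around $\yy$, infinitely many $F_{r_k}$ occur almost surely. However, $\yy = f(\xx)$ depends on $f$, which is not determined in advance. To handle this, replace $\yy$ by a countable dense family of candidate points $y'$ and candidate scales. The plan is to require $F_r$ to hold for every $y'$ in a $\delta r$-net of $B_1$. For this, use \Cref{lem:independence-across-balls} at each scale: we need high probability that the event holds somewhere near each candidate, and a union over roughly $r^{-2}$ net points is fine if the failure probability is superpolynomially small. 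This superpolynomial decay is obtained by combining many independent sub-scales within each scale, via \Cref{lem:independence-across-scales}, and it should be built into $F_r$ by allowing the three test points to be chosen among several sub-scales. The independence of $\Phi_1$ and $\Phi_2$ is what allows the two field conditions to hold simultaneously with positive probability.

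The main obstacle is this uniformity over the unknown image point $\yy$ together with the approximate orientation. The linear-closeness window shrinks as $r \to 0$, but $\yy$ ranges over a continuum. Once uniformity is achieved, for small enough $r$ in the good sequence, $f$ would be $\delta r$-close to linear on $B_r(\xx)$ while $F_r$ holds at a net point near $\yy$, giving the contradiction. I expect the hardest technical part to be making the metric ratios on $F_r$ robust to $\delta r$ perturbations of the three image points; this is done by choosing the bump annuli at a scale $\gg \delta r$.
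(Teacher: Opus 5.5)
\textbf{Gap: uniformity over the unknown image point.} The gap is where you yourself put the main obstacle, uniformity over $\yy=f(\xx)$, and the remedy you propose cannot close it. At a fixed candidate centre, each scale succeeds only with a small probability $p$. This $p$ comes from a Cameron--Martin bump argument, and your $F_r$ further requires a matching rare configuration of $\Phi_1$ at $\xx$. There are only $O(\log(1/\varepsilon))$ nearly independent scales in $[\varepsilon^2,\varepsilon]$, and regrouping them into ``sub-scales within each scale'' does not increase that count. So the failure probability at a fixed centre is only $\varepsilon^{c}$ with $c$ of order $p$, not superpolynomial. This cannot beat a union bound over the $\varepsilon^{-O(1)}$ candidate centres, which must be $\delta\rr$-dense at the scale $\rr$ actually used.

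The obstruction is real, not an artefact of the union bound. If the blocks at scale $\lambda^k$ failed independently with probability $1-p>\lambda^2$, the set of $\yy$ avoiding every event would be a supercritical fractal percolation limit set, nonempty with positive probability. Hence you need a per-scale event whose probability is close to $1$. That can only come from spatial bootstrapping over many well-separated test locations at the same scale (\Cref{lem:independence-across-balls}). Your joint event is not set up for this, because the $\Phi_1$-configuration must occur at the same test location and scale as the $\Phi_2$-configuration, and it is itself unlikely.

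\textbf{Missing idea: condition on $\Phi_1$ and $\xx$.} Condition on $\Phi_1$ and $\xx$, and impose no requirement on $\Phi_1$ at all.
\begin{enumerate}
\item Given $\Phi_1$ and $\xx$, the $D_{\Phi_1}$-ratio of each pulled-back triple $A^{-1}(y^{j,\bullet}_{0,\rr})+\xx$ fixes an admissible interval $I^j_\rr$. The $D_{\Phi_2}$-ratio of the image triple must lie in $I^j_\rr$.
\item Choose $M_2$ so large that no admissible interval meets both $[0,M_1]$ and $[M_2,\infty)$. Then a $\Phi_2$-only event has probability at least $p(\eta)>0$ for \emph{every} admissible $I$ (\Cref{lem:E-properties}). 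This event says that the ratio lies outside $I$, robustly over small balls, with geodesics localized.
\item Place about $a^{-1}$ such triples around $A_{\rr/2,\rr}(\zz)$. This boosts the probability to $q$ close to $1$, uniformly in the intervals (\Cref{lem:F-properties}).
\item Apply \Cref{lem:independence-across-scales} to $\Phi_2$ alone, which is independent of the conditioning. Take a union bound over $\zz\in(\varepsilon^3\BZ)^2\cap B_{1/\varepsilon}(0)$ and then apply Borel--Cantelli. This gives, for every $\yy$, arbitrarily small $\rr$ and $\zz$ with $\lVert\zz-\yy\rVert\le\delta\rr$ at which the pattern is violated, contradicting \Cref{lem:linearization}.
\end{enumerate}
Your joint-event approach could alternatively be repaired: condition on $\Phi_1$, show that at a positive density of scales its configuration occurs at many test locations, then bootstrap $\Phi_2$ over those locations. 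The dichotomy above makes this unnecessary.

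\textbf{Smaller points.} Your events must be determined by annuli rather than nested balls $B_r(\xx)$ for independence across scales to apply. Also, restricting $f$ gives quasisymmetry for the restricted metrics, not the internal ones. The localization condition, as in condition (b) of $E_{z,r,I}$, must therefore be part of the event.
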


Before proceeding, we recall from \cite[Theorem~11.3]{MR1800917} that every quasisymmetric homeomorphism $f \colon (X, D_X) \to (Y, D_Y)$ between connected metric spaces is $\eta$-quasisymmetric for some distortion function $\eta$ of the form $\eta(\bullet) = C(\bullet^\alpha \vee \bullet^{1/\alpha})$, where $C > 0$ and $\alpha \ge 1$.

\begin{proof}[Proof of \Cref{lem:nonexistence} assuming \Cref{lem:matrix}]
    By \cite[Theorem~11.3]{MR1800917}, it suffices to consider the case of a fixed distortion function $\eta$. By \Cref{lem:Borel}, for each $R > 0$, the event that there exists an $\eta$-quasisymmetric embedding of $(\BD, D_{\Phi_1})$ into $(B_R(0), D_{\Phi_2})$ is measurable. Thus, the event that there exists an $\eta$-quasisymmetric embedding of $(\BD, D_{\Phi_1})$ into $(\BC, D_{\Phi_2})$ is measurable.

    Suppose, by way of contradiction, that with positive probability there exists an $\eta$-quasisymmetric embedding of $(\BD, D_{\Phi_1})$ into $(\BC, D_{\Phi_2})$. By \Cref{lem:quasiconformality-GFF-GFF} and the fact that the derivative of a quasiconformal mapping exists and is invertible at Lebesgue almost every point, we conclude that, with positive probability, there exists an $\eta$-quasisymmetric embedding $f$ from $(\BD, D_{\Phi_1})$ into $(\BC, D_{\Phi_2})$ such that $Df(\xx)$ exists and is invertible. On the other hand, since $\mathop{\mathrm{GL}}(2,\BR)$ is second countable, there exists a countable subcover of
    \begin{equation*}
        \bigcup_{A \in \mathop{\mathrm{GL}}(2,\BR)} \{J \in \mathop{\mathrm{GL}}(2,\BR) : \lVert J - A\rVert \le \delta(\eta, A)\}.
    \end{equation*}
    By \Cref{lem:matrix}, this implies that, almost surely, there is no $\eta$-quasisymmetric embedding $f$ from $(\BD, D_{\Phi_1})$ into $(\BC, D_{\Phi_2})$ such that $Df(\xx)$ exists and is invertible, a contradiction. This completes the proof.
\end{proof}

\Cref{subsection:pattern,subsection:proof-matrix} are devoted to the proof of \Cref{lem:matrix}. The intuition for the proof is as follows. Since $f^{-1}$ is differentiable at $f(\xx)$ with Jacobian matrix close to $A^{-1}$, it maps each sufficiently small Euclidean annulus around $f(\xx)$ to a subset close to an elliptical annulus around $\xx$. Since $f$ is $\eta$-quasisymmetric, this implies that the ``pattern'' of each such Euclidean annulus must match the ``pattern'' of the corresponding elliptical annulus. However, we will carefully design the ``patterns'' so that, given any prescribed ``pattern'', the ``pattern'' of the Euclidean annulus is different from the prescribed one with high probability. Thus, using independence across scales for the whole-plane GFF (cf.~\Cref{lem:independence-across-scales}), we will show that, almost surely given $\Phi_1$ and $\xx$, with probability one there is no point around which the ``patterns'' of the Euclidean annuli at all scales match the ``patterns'' of the elliptical annuli around $\xx$. This implies that $f$ cannot exist.

\subsection{Constructing obstructions}\label{subsection:pattern}

In the present subsection, we construct a suitable ``pattern'' associated with a given Euclidean annulus, and show that, for any prescribed ``pattern'', with high probability the ``pattern'' of the Euclidean annulus is not the prescribed one. We will achieve this as follows. First, we design ``toy patterns'' on Euclidean balls, so that, for any given ``toy pattern'', with uniformly positive probability the ``toy pattern'' associated with a Euclidean ball is not the given one; see \Cref{lem:E-properties}. Then, by placing many Euclidean balls inside a Euclidean annulus and defining the ``pattern'' of the annulus to be the data consisting of the ``toy patterns'' of all these balls, we will be able to ``bootstrap'' the probability to be high; see \Cref{lem:F-properties}.

Let $\Phi_1$ and $\Phi_2$ be as in \Cref{lem:nonexistence}. Fix a distortion function $\eta$ and an invertible $2 \times 2$ matrix $A \in \mathop{\mathrm{GL}}(2, \BR)$.

We shall refer to an interval as \emph{($\eta$-)admissible} if it is of the form $[1/\eta(1/t), \eta(t)]$ for some $t > 0$. Note that if $f \colon (X, D_X) \to (Y, D_Y)$ is an $\eta$-quasisymmetric homeomorphism and $x,y,z \in X$ satisfy $D_X(x,y)/D_X(x,z) = t$, then the ratio $D_Y(f(x),f(y))/D_Y(f(x),f(z))$ must belong to the admissible interval $[1/\eta(1/t), \eta(t)]$. 

\begin{figure}[ht!]
    \centering
    \includegraphics[width=.6\linewidth]{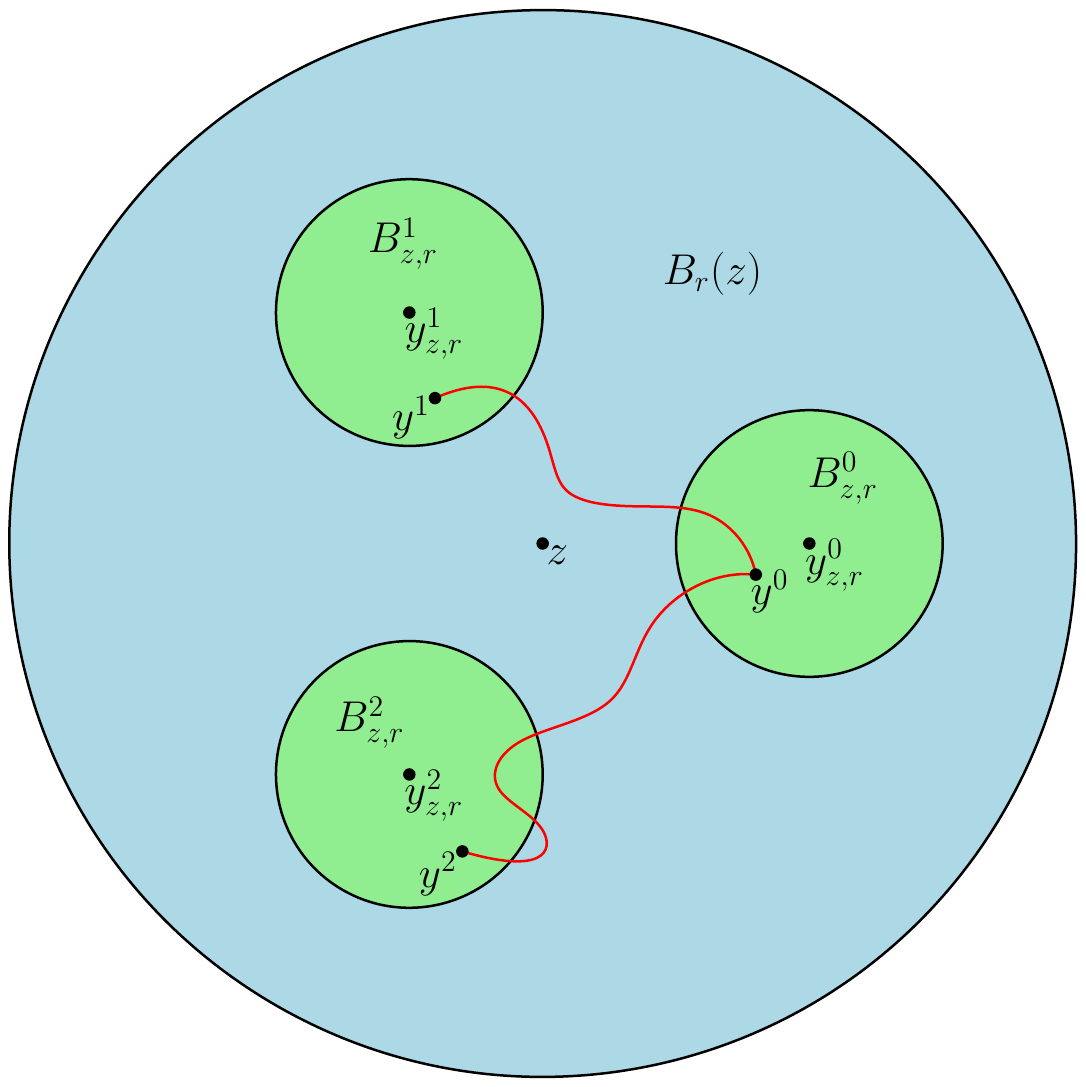}
    \caption{Illustration of the objects involved in the definition of the event $E_{z,r,I}$. The light blue region depicts the Euclidean ball $B_r(z)$. The light green Euclidean balls $B_{z,r}^0$, $B_{z,r}^1$, and $B_{z,r}^2$ have radius $r/4$ and are centered at $y_{z,r}^0 = z + r/2$, $y_{z,r}^1 = z + r\re^{2\pi\ri/3}/2$, and $y_{z,r}^2 = z + r\re^{4\pi\ri/3}/2$, respectively. Condition~\eqref{it:def-E-0} in the definition of $E_{z,r,I}$ requires that, for each $y^0 \in B_{z,r}^0$, $y^1 \in B_{z,r}^1$, and $y^2 \in B_{z,r}^2$, the ratio $D_{\Phi_2}(y^0, y^1)/D_{\Phi_2}(y^0, y^2)$ is not contained in the prescribed admissible interval $I$. Moreover, condition~\eqref{it:def-E-1} ensures that the $D_{\Phi_2}$-geodesics connecting these points are contained in $B_r(z)$. By the definition of admissible intervals, the event $E_{z,r,I}$ prevents certain triples of points from being mapped $\eta$-quasisymmetrically into $B_{z,r}^0$, $B_{z,r}^1$, and $B_{z,r}^2$, respectively.}
    \label{fig:E}
\end{figure}

Let $z \in \BC$ and $r > 0$. We shall write
\begin{gather*}
    y_{z,r}^0 \defeq z + r/2; \quad y_{z,r}^1 \defeq z + r\re^{2\pi\ri/3}/2; \quad y_{z,r}^2 \defeq z + r\re^{4\pi\ri/3}/2; \\
    B_{z,r}^0 \defeq B_{r/4}(y_{z,r}^0); \quad B_{z,r}^1 \defeq B_{r/4}(y_{z,r}^1); \quad B_{z,r}^2 \defeq B_{r/4}(y_{z,r}^2). 
\end{gather*}
Let $I$ be an admissible interval. We shall write $E_{z,r,I}$ for the following event. 
\begin{enumerate}[label=(\alph*),ref=\alph*]
    \item\label{it:def-E-0} We have
    \begin{equation*}
        \frac{D_{\Phi_2}(y^0, y^1)}{D_{\Phi_2}(y^0, y^2)} \notin I, \quad \forall y^0 \in B_{z,r}^0, \ \forall y^1 \in B_{z,r}^1, \ \forall y^2 \in B_{z,r}^2. 
    \end{equation*}
    \item\label{it:def-E-1} We have $D_{\Phi_2}(y, y^\prime) \le D_{\Phi_2}(y, \partial B_r(z))$ for all $y, y^\prime \in B_{3r/4}(z)$. 
\end{enumerate}
See \Cref{fig:E} for an illustration. 

\begin{lemma}\label{lem:E-properties}
    \begin{enumerate}
        \item\label{it:E-properties-0} The event $E_{z,r,I}$ is almost surely determined by $\Phi_2|_{B_r(z)}$ modulo additive constant.
        \item\label{it:E-properties-1} The probability $\BP\lbrack E_{z,r,I}\rbrack$ does not depend on $z$ and $r$. 
        \item\label{it:E-properties-2} There exists $p = p(\eta) \in (0, 1)$ such that $\BP\lbrack E_{z,r,I}\rbrack \ge p$ for all admissible intervals $I$.
    \end{enumerate}
\end{lemma}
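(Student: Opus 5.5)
Parts (i) and (ii) follow from locality, Weyl scaling and the coordinate-change axiom; part (iii) is the substance. The plan for (iii) is to produce, for each admissible $I$, a positive-probability event on which the ratio in condition~(a) is either uniformly large or uniformly small, and to choose the alternative according to $I$.

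For (i), condition~(a) involves the distances $D_{\Phi_2}(y^0,y^k)$. Condition~(b) forces a $D_{\Phi_2}$-geodesic between points of $B_{3r/4}(z)$ to stay inside $B_r(z)$: a path of length at most $D_{\Phi_2}(y,\partial B_r(z))$ started at $y$ cannot exit $B_r(z)$. So on the event where (b) holds, $D_{\Phi_2}$ agrees with the internal metric $D_{\Phi_2}(\bullet,\bullet;B_r(z))$ on $B_{3r/4}(z)$. Likewise $D_{\Phi_2}(y,\partial B_r(z))$ is determined by the internal metric. Condition~(b) itself can be rewritten purely in terms of the internal metric, since it is equivalent to the internal-metric version of the same inequality. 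By locality, the internal metric is a.s.\ determined by $\Phi_2|_{B_r(z)}$. Adding a constant $c$ multiplies all distances by $\re^{\xi c}$ (Weyl scaling), and both conditions are scale invariant; hence $E_{z,r,I}$ depends only on the field modulo additive constants.

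For (ii), the whole-plane GFF modulo constants is invariant under $w\mapsto z+rw$. By the coordinate-change axiom, $D_{\Phi_2(z+r\bullet)+Q\log r}(x,y)=D_{\Phi_2}(z+rx,z+ry)$. The additive constant $Q\log r$ only rescales distances, which preserves (a) and (b). So $E_{z,r,I}$ corresponds to $E_{0,1,I}$ for a field with the same law.

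For (iii), work at $z=0$, $r=1$. An admissible interval $[1/\eta(1/t),\eta(t)]$ is compact in $(0,\infty)$. If $t\le 1$, then $\eta(t)\le\eta(1)$, so $I\subset(0,\eta(1)]$. If $t\ge1$, then $1/\eta(1/t)\ge 1/\eta(1)$, so $I\subset[1/\eta(1),\infty)$. Set $L\defeq 2\eta(1)$. It therefore suffices to find $p>0$ such that each of the following events has probability at least $p$.
\begin{itemize}
    \item $F_+$: condition~(b) holds and $D_{\Phi_2}(y^0,y^1)/D_{\Phi_2}(y^0,y^2)>L$ for all $y^k\in B^k_{0,1}$.
    \item $F_-$: the analogous event with the ratio below $1/L$.
\end{itemize}
Then $E_{0,1,I}\supset F_+$ or $E_{0,1,I}\supset F_-$, depending on which side $I$ lies. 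By rotation and reflection symmetry it is enough to treat $F_+$.

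To build $F_+$, I would use the standard ``add a smooth bump'' argument. Choose a smooth compactly supported function $\phi$ on $B_1(0)$ with these properties:
\begin{itemize}
    \item $\phi$ is very negative on a neighbourhood $V$ of the convex hull of $B^0_{0,1}\cup B^2_{0,1}\cup B_{3/4}(0)$, except on a thin wall separating $B^1_{0,1}$ from $B^0_{0,1}$;
    \item $\phi$ is very positive on that wall;
    \item $\phi$ is $0$ near $\partial B_1(0)$.
\end{itemize}
By Weyl scaling, with positive probability the metric $D_{\Phi_2+\phi}$ has the following behaviour. Distances inside $V$ are tiny compared with the distance to $\partial B_1(0)$. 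Meanwhile $D(y^0,y^1)$ is at least of order the wall crossing cost, whereas $D(y^0,y^2)$ is tiny. This gives the ratio $>L$ and condition~(b). The estimates need quantitative control of $\sup$ and $\inf$ distances of $D_{\Phi_2}$ across fixed compact sets, which holds with positive probability by the topology axiom. Finally, $\Phi_2+\phi$ is absolutely continuous with respect to $\Phi_2$ (Cameron--Martin, since $\phi\in H^1$), so the event has positive probability for $\Phi_2$ itself.

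The constant $p$ depends only on $L$, hence only on $\eta$. The main obstacle is making the wall argument rigorous: condition~(b) must hold uniformly over all $y,y'\in B_{3/4}(0)$, including points on the wall. I would handle this by letting the wall be positive only on a thin strip that can be circumvented inside $B_{3/4}(0)$ at cost small relative to the distance to $\partial B_1(0)$. Geodesics from $B^0$ to $B^1$ must cross the strip, because the strip separates $B^0_{0,1}$ from $B^1_{0,1}$ inside $B_1(0)$. The detour around the strip's end must itself be blocked, for example by extending the strip to near $\partial B_1(0)$ while keeping it positive there. I expect balancing these two requirements to be the delicate part.
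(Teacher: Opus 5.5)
Parts (i) and (ii) are fine. Your check that condition (b) is equivalent to its internal-metric version fills a small step the paper leaves implicit. In (iii), the reduction via $I\subset(0,\eta(1)]$ or $I\subset[1/\eta(1),\infty)$ with $L=2\eta(1)$ is a tidy substitute for the paper's choice of $M_1<M_2$. The gap is the construction of $F_+$, which is the whole substance of (iii). You assert that one bump yields both the ratio bound and condition (b), then concede that the balancing is unresolved, and the fixes you propose pull in opposite directions.

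\textbf{Why the construction fails as written.} The binding constraint is not points on the wall but the pair $(y^0,y^1)$ itself. Condition (b) with $y\in B^0_{0,1}$ and $y'\in B^1_{0,1}$ demands $D(y^0,y^1)\le D(y^0,\partial B_1(0))$. So the cost of crossing your barrier is bounded \emph{above} by the cost of reaching $\partial B_1(0)$, while the ratio bound needs it bounded \emph{below} by $L\,D(y^0,y^2)$. Your three mechanisms each break one side:
\begin{itemize}
\item A ``very positive'' wall with $\phi=0$ near $\partial B_1(0)$ violates the upper bound. Your claim that ``distances inside $V$ are tiny'' is false for pairs on opposite sides of the wall.
\item A strip that can be circumvented cheaply inside $B_{3/4}(0)$ violates the lower bound.
\item Extending the strip with positive values up to $\partial B_1(0)$ contradicts $\phi=0$ there.
\end{itemize}

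\textbf{The missing idea.} You need a three-level hierarchy of cost scales, with a \emph{closed} ring around $B^1_{0,1}$ so that no detour ever needs blocking. The paper does this in two steps.
\begin{enumerate}
\item Fix $C$ such that, with positive probability, all of the following hold: $D_{\Phi_2}(y^0,y^2;B_{7/8}(0)\setminus B_{3/8}(y^1_{0,1}))<C$ on $B^0_{0,1}\times B^2_{0,1}$; $D_{\Phi_2}(y,y';B_{7/8}(0))<C$ on $B_{3/4}(0)$; and the crossing distances of $A_{9/32,5/16}(y^1_{0,1})$ and of $A_{29/32,15/16}(0)$ both exceed $C^{-1}$.
\item Add nonnegative bumps supported in $A_{1/4,3/8}(y^1_{0,1})$ and in $A_{7/8,1}(0)$. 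They equal $\sqrt6\log(C^2M)$ and $\sqrt6\log(C^4M)$ on the respective inner annuli.
\end{enumerate}
Weyl scaling then gives $D(y^0,y^2)<C$ and $D(y^0,y^1)>CM$. It also gives $D(y,y')<C^3M$ on $B_{3/4}(0)$, since the inner bump multiplies lengths by at most $C^2M$, and $D(y,\partial B_1(0))>C^3M$. The ring's multiplier $C^2M$ is deliberately intermediate between the interior scale and the boundary scale.

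Your negative-interior variant is equivalent to this up to a global additive constant, so it can be made to work. That requires taking the wall to be a closed ring and pinning its multiplier between $L$ times the interior scale and the boundary scale. As written, that step is missing.
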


\begin{proof}
    By the Weyl scaling axiom, the additive constant of $\Phi_2$ does not affect the occurrence of $E_{z,r,I}$. On the event that condition~\eqref{it:def-E-1} holds, any path between two points of $B_{3r/4}(z)$ that exits $B_r(z)$ cannot be a $D_{\Phi_2}$-geodesic. This completes the proof of assertion~\eqref{it:E-properties-0}. Assertion~\eqref{it:E-properties-1} follows immediately from the scale and translation invariance of the whole-plane GFF. 

    Finally, we consider assertion~\eqref{it:E-properties-2}. We \emph{claim} that for each $M > 0$, it holds with positive probability that 
    \begin{equation}\label{eq:E-properties-proof-0}
        \frac{D_{\Phi_2}(y^0, y^1)}{D_{\Phi_2}(y^0, y^2)} > M, \quad \forall y^0 \in B_{z,r}^0, \ \forall y^1 \in B_{z,r}^1, \ \forall y^2 \in B_{z,r}^2
    \end{equation}
    and condition~\eqref{it:def-E-1} holds. 
    
    We may now assume without loss of generality that $B_r(z) \subset \BD$ and that $\Phi_2$ is normalized so that its circle average over $\partial\BD$ is zero. We may choose a sufficiently large constant $C > 0$ such that it holds with positive probability that 
    \begin{gather*}
        D_{\Phi_2}(y^0, y^2; B_{7r/8}(z) \setminus B_{3r/8}(y_{z,r}^1)) < C, \quad \forall y^0 \in B_{z,r}^0, \ \forall y^2 \in B_{z,r}^2; \\
        D_{\Phi_2}(\partial B_{9r/32}(y_{z,r}^1), \partial B_{5r/16}(y_{z,r}^1)) > C^{-1}; \\
        D_{\Phi_2}(y, y^\prime; B_{7r/8}(z)) < C, \quad \forall y, y^\prime \in B_{3r/4}(z); \\
        D_{\Phi_2}(\partial B_{29r/32}(z), \partial B_{15r/16}(z)) > C^{-1}.
    \end{gather*}
    Choose a deterministic bump function $\phi$ (resp.~$\phi^\prime$) supported on $A_{r/4,3r/8}(y_{z,r}^1)$ (resp.~$A_{7r/8,r}(z)$) and equal to the constant $\sqrt6\log(C^2M)$ (resp.~$\sqrt6\log(C^4M)$) on $A_{9r/32,5r/16}(y_{z,r}^1)$ (resp.~$A_{29r/32,15r/16}(z)$). Thus, by the Weyl scaling axiom, it holds with positive probability that 
    \begin{gather*}
        \begin{multlined}
            D_{\Phi_2 + \phi + \phi^\prime}(y^0, y^2; B_{7r/8}(z) \setminus B_{3r/8}(y_{z,r}^1)) = D_{\Phi_2}(y^0, y^2; B_{7r/8}(z) \setminus B_{3r/8}(y_{z,r}^1)) < C, \\
            \forall y^0 \in B_{z,r}^0, \ \forall y^2 \in B_{z,r}^2;
        \end{multlined} \\
        D_{\Phi_2 + \phi + \phi^\prime}(\partial B_{9r/32}(y_{z,r}^1), \partial B_{5r/16}(y_{z,r}^1)) = C^2M D_{\Phi_2}(\partial B_{9r/32}(y_{z,r}^1), \partial B_{5r/16}(y_{z,r}^1)) > CM; \\
        D_{\Phi_2 + \phi + \phi^\prime}(y, y^\prime; B_{7r/8}(z)) \le C^2M D_{\Phi_2}(y, y^\prime; B_{7r/8}(z)) < C^3M, \quad \forall y, y^\prime \in B_{3r/4}(z); \\
        D_{\Phi_2 + \phi + \phi^\prime}(\partial B_{29r/32}(z), \partial B_{15r/16}(z)) = C^4M D_{\Phi_2}(\partial B_{29r/32}(z), \partial B_{15r/16}(z)) > C^3M.
    \end{gather*}
    Since the support of $\phi + \phi^\prime$ is contained in $\BD$, the laws of $\Phi_2$ and $\Phi_2 + \phi + \phi^\prime$ are mutually absolutely continuous. This implies that it holds with positive probability that 
    \begin{gather*}
        D_{\Phi_2}(y^0, y^2; B_{7r/8}(z) \setminus B_{3r/8}(y_{z,r}^1)) < C, \quad \forall y^0 \in B_{z,r}^0, \ \forall y^2 \in B_{z,r}^2; \\
        D_{\Phi_2}(\partial B_{9r/32}(y_{z,r}^1), \partial B_{5r/16}(y_{z,r}^1)) > CM; \\
        D_{\Phi_2}(y, y^\prime; B_{7r/8}(z)) < C^3M, \quad \forall y, y^\prime \in B_{3r/4}(z); \\
        D_{\Phi_2}(\partial B_{29r/32}(z), \partial B_{15r/16}(z)) > C^3M, 
    \end{gather*}
    in which case~\eqref{eq:E-properties-proof-0} and condition~\eqref{it:def-E-1} hold. This completes the proof of the \emph{claim}.  

    We may choose a sufficiently large constant $M_1 > 0$ such that it holds with positive probability that 
    \begin{equation}\label{eq:E-properties-proof-1}
        \frac{D_{\Phi_2}(y^0, y^1)}{D_{\Phi_2}(y^0, y^2)} < M_1, \quad \forall y^0 \in B_{z,r}^0, \ \forall y^1 \in B_{z,r}^1, \ \forall y^2 \in B_{z,r}^2
    \end{equation}
    and condition~\eqref{it:def-E-1} holds. Note that $[1/\eta(1/t), \eta(t)]$ shrinks to $0$ as $t \to 0$ and tends to $\infty$ as $t \to \infty$. Moreover, the assignment $t \mapsto [1/\eta(1/t), \eta(t)]$ is continuous. Thus, we may choose another sufficiently large constant $M_2 > M_1$ such that no admissible interval intersects both $[0, M_1]$ and $[M_2, \infty)$. Thus, each event $E_{z,r,I}$ contains either the event that~\eqref{eq:E-properties-proof-1} and condition~\eqref{it:def-E-1} hold, or the event that~\eqref{eq:E-properties-proof-0} (with $M_2$ in place of $M$) and condition~\eqref{it:def-E-1} hold. This completes the proof of assertion~\eqref{it:E-properties-2}. 
\end{proof}

Fix a sufficiently small parameter $a \in (0, 1)$ to be chosen later. 

Let $\zz \in \BC$ and $\rr > 0$. We define the collection of ``test points''
\begin{equation*}
    Z_{\zz,\rr} \defeq \{z_{\zz,\rr}^1, \dots, z_{\zz,\rr}^{\lfloor a^{-1}\rfloor}\}, \quad \text{where } z_{\zz,\rr}^j \defeq \zz + (3\rr/4)\re^{2\pi\ri aj}. 
\end{equation*}
Let $\SCI \defeq (I^1, \dots, I^{\lfloor a^{-1}\rfloor})$ be a sequence of admissible intervals. Then we shall write 
\begin{equation*}
    F_{\zz,\rr,\SCI} \defeq \bigcup_{j \in [1, a^{-1}]_\BZ} E_{z_{\zz,\rr}^j,a\rr,I^j}.
\end{equation*}
(Here, we note that $\lVert z_{\zz,\rr}^{j - 1} - z_{\zz,\rr}^j\rVert \approx 3\pi a\rr/2$ when $a$ is sufficiently small.) To lighten notation, we shall write
\begin{gather*}
    y_{\zz,\rr}^{j,0} \defeq y_{z_{\zz,\rr}^j,a\rr}^0; \quad y_{\zz,\rr}^{j,1} \defeq y_{z_{\zz,\rr}^j,a\rr}^1; \quad y_{\zz,\rr}^{j,2} \defeq y_{z_{\zz,\rr}^j,a\rr}^2; \\
    B_{\zz,\rr}^{j,0} \defeq B_{z_{\zz,\rr}^j,a\rr}^0; \quad B_{\zz,\rr}^{j,1} \defeq B_{z_{\zz,\rr}^j,a\rr}^1; \quad B_{\zz,\rr}^{j,2} \defeq B_{z_{\zz,\rr}^j,a\rr}^2. 
\end{gather*}
See \Cref{fig:F} for an illustration. 

\begin{figure}[ht!]
    \centering
    \includegraphics[width=.8\linewidth]{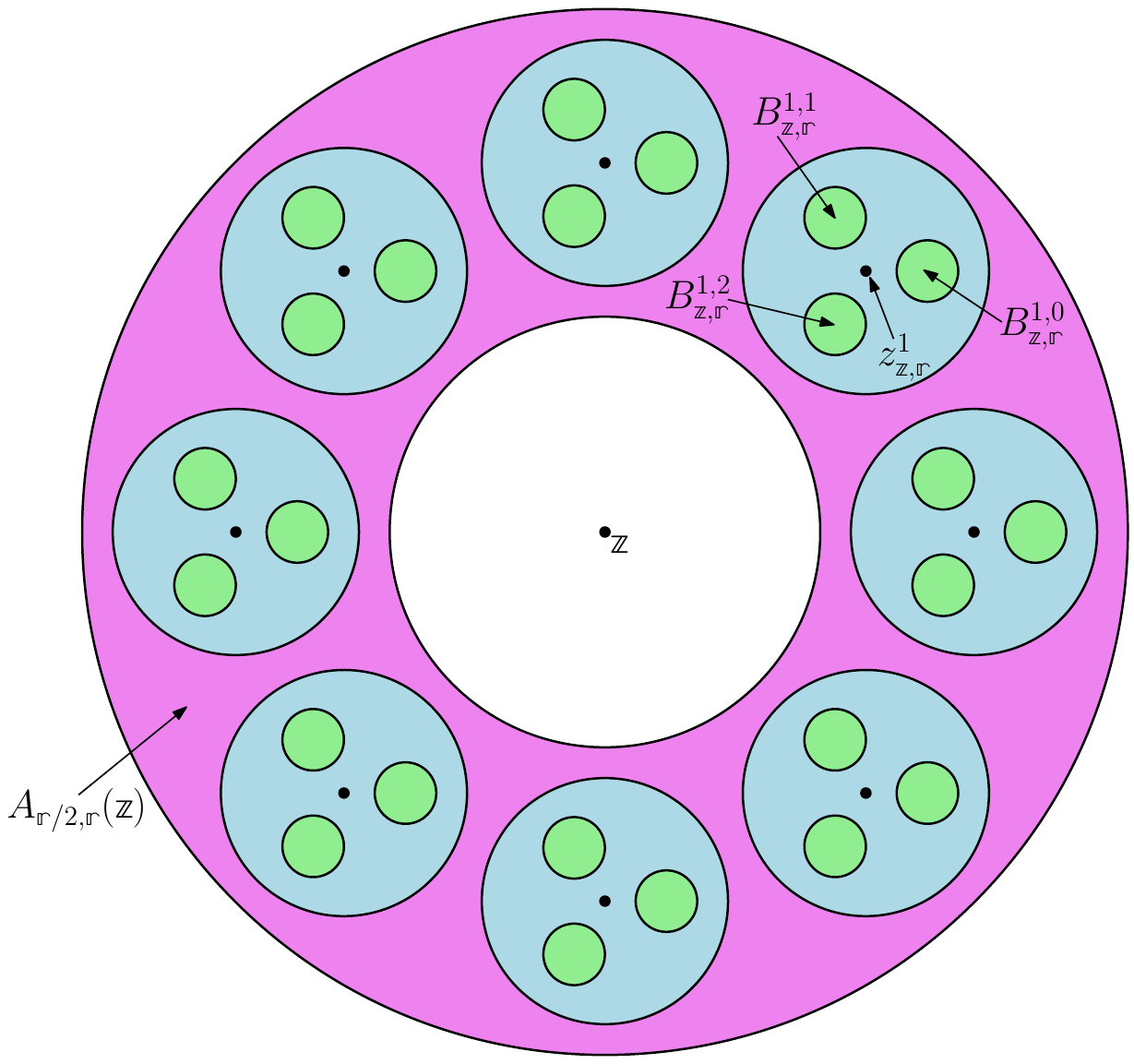}
    \caption{Illustration of the objects involved in the definition of the event $F_{\zz,\rr,\SCI}$ (with $a = 1/8$). The violet region depicts the Euclidean annulus $A_{\rr/2,\rr}(\zz)$. The black dots in this annulus are the points $z_{\zz,\rr}^1, \dots, z_{\zz,\rr}^{\lfloor a^{-1}\rfloor}$. The light green Euclidean balls are $B_{z_{\zz,\rr}^j,a\rr}^\bullet$, abbreviated as $B_{\zz,\rr}^{j,\bullet}$, for $j \in [1,a^{-1}]_\BZ$ and $\bullet \in \{0,1,2\}$. Their centers are $y_{z_{\zz,\rr}^j,a\rr}^\bullet$, abbreviated as $y_{\zz,\rr}^{j,\bullet}$. The event $F_{\zz,\rr,\SCI}$ requires that at least one of the events $E_{z_{\zz,\rr}^j,a\rr,I^j}$, for $j \in [1,a^{-1}]_\BZ$, occurs.}
    \label{fig:F}
\end{figure}

\begin{lemma}\label{lem:F-properties}
    \begin{enumerate}
        \item\label{it:F-properties-0} The event $F_{\zz,\rr,\SCI}$ is almost surely determined by $\Phi_2|_{A_{\rr/2,\rr}(\zz)}$ modulo additive constant. 
        \item\label{it:F-properties-1} The probability $\BP\lbrack F_{\zz,\rr,\SCI}\rbrack$ does not depend on $\zz$ and $\rr$. 
        \item\label{it:F-properties-2} For each $q \in (0,1)$, we may choose the parameter $a$ sufficiently small, depending only on $\eta$ and $q$, so that $\BP\lbrack F_{\zz,\rr,\SCI}\rbrack \ge q$ for every sequence of admissible intervals $\SCI$.
    \end{enumerate}
\end{lemma}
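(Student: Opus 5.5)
For assertion~\eqref{it:F-properties-0}, I will check that each ball $B_{a\rr}(z_{\zz,\rr}^j)$ lies in the annulus $A_{\rr/2,\rr}(\zz)$. Indeed, $\lVert z_{\zz,\rr}^j - \zz\rVert = 3\rr/4$, so this ball lies in $A_{3\rr/4 - a\rr,3\rr/4 + a\rr}(\zz) \subset A_{\rr/2,\rr}(\zz)$ as soon as $a < 1/4$. By \Cref{lem:E-properties},~\eqref{it:E-properties-0}, each $E_{z_{\zz,\rr}^j,a\rr,I^j}$ is almost surely determined by $\Phi_2|_{B_{a\rr}(z_{\zz,\rr}^j)}$ modulo additive constant. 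Since these restrictions are determined by $\Phi_2|_{A_{\rr/2,\rr}(\zz)}$ modulo a common additive constant, so is the union $F_{\zz,\rr,\SCI}$.

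For assertion~\eqref{it:F-properties-1}, the whole configuration (the test points, the balls $B_{\zz,\rr}^{j,\bullet}$, and the events) is the image under the affine map $w \mapsto \zz + \rr w$ of the configuration for $(\zz,\rr) = (0,1)$. The interval sequence $\SCI$ is unchanged by this map. The claim then follows from the translation and scale invariance in law of the whole-plane GFF modulo additive constants, together with the Weyl scaling and coordinate change axioms: a rescaling multiplies $D_{\Phi_2}$ by a constant, and this leaves both ratio conditions~\eqref{it:def-E-0} and~\eqref{it:def-E-1} invariant. This is the same reasoning as in \Cref{lem:E-properties},~\eqref{it:E-properties-1}.

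Assertion~\eqref{it:F-properties-2} is the substantive part, and I will get it from \Cref{lem:independence-across-balls}. Let $p = p(\eta)$ be as in \Cref{lem:E-properties},~\eqref{it:E-properties-2}, so that $\BP\lbrack E_{z_{\zz,\rr}^j,a\rr,I^j}\rbrack \ge p$ for every $j$ and every admissible $I^j$. The bound is uniform in $I^j$, which is exactly why $\SCI$ may be arbitrary. Take $s = 1/4$ and let $n = n(p,q,1/4)$ be given by \Cref{lem:independence-across-balls}.

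The main obstacle is the separation requirement $\lVert z^j - z^k\rVert \ge 2(1+s)a\rr$ between balls of radius $a\rr$. Consecutive test points are at distance $2 \cdot (3\rr/4)\sin(\pi a) \approx 3\pi a\rr/2$, and $3\pi/2 > 5/2 = 2(1+1/4)$. So for $a$ small, say $\sin(\pi a) \ge 0.9\pi a$, all distinct pairs are separated by at least $2.5a\rr$; non-consecutive pairs are even farther apart, since chord length is monotone in angle up to $\pi$. The number of balls is $\lfloor a^{-1}\rfloor$, and I will choose $a$ small enough, depending only on $p$ (hence only on $\eta$) and on $q$, that $\lfloor a^{-1}\rfloor \ge n$. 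Applying \Cref{lem:independence-across-balls} to any $n$ of the events then gives $\BP\lbrack F_{\zz,\rr,\SCI}\rbrack \ge q$, uniformly over all sequences $\SCI$ of admissible intervals.
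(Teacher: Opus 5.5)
Your proposal is correct and follows the paper's route. Assertion~(1) comes from \Cref{lem:E-properties} together with the inclusion $B_{a\rr}(z_{\zz,\rr}^j)\subset A_{\rr/2,\rr}(\zz)$, assertion~(2) from scale and translation invariance, and assertion~(3) from the uniform lower bound $p(\eta)$ combined with \Cref{lem:independence-across-balls}; your explicit checks ($a<1/4$, chord separation at least $2(1+1/4)a\rr$, and $\lfloor a^{-1}\rfloor \ge n(p,q,1/4)$) just fill in details the paper leaves implicit.
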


\begin{proof}
    By \Cref{lem:E-properties},~\eqref{it:E-properties-0}, the event $F_{\zz,\rr,\SCI}$ is almost surely determined by the restriction of $\Phi_2$ to $\bigcup_{j \in [1, a^{-1}]_\BZ} B_{a\rr}(z_{\zz,\rr}^j)$ modulo additive constant. Since $\bigcup_{j \in [1, a^{-1}]_\BZ} B_{a\rr}(z_{\zz,\rr}^j) \subset A_{\rr/2,\rr}(\zz)$, this completes the proof of assertion~\eqref{it:F-properties-0}. Assertion~\eqref{it:F-properties-1} follows immediately from the scale and translation invariance of the whole-plane GFF. Assertion~\eqref{it:F-properties-2} follows immediately from \Cref{lem:E-properties},~\eqref{it:E-properties-0} and~\eqref{it:E-properties-2} and \Cref{lem:independence-across-balls}. 
\end{proof}

We also record the following elementary lemma. It will be applied in \Cref{subsection:proof-matrix} with $\yy = f(\xx)$ and $J = Df(\xx)$.

\begin{lemma}\label{lem:linearization}
    There exists $\delta = \delta(A, a) > 0$ such that the following is true: Let $f^{-1} \colon B_\rr(\zz) \to \BC$ be an embedding. Suppose that there exists $J \in \mathop{\mathrm{GL}}(2, \BR)$ with $\lVert J - A\rVert \le \delta$ and $\lVert\yy - \zz\rVert \le \delta\rr$ such that
    \begin{equation}\label{eq:linearization}
        \lVert f^{-1}(y) - J^{-1}(y - \yy) - f^{-1}(\yy)\rVert \le \delta\lVert y - \yy\rVert, \quad \forall y \in B_\rr(\zz). 
    \end{equation}
    Then 
    \begin{equation*}
        A^{-1}(y_{\zz,\rr}^{j,\bullet} - \zz) + f^{-1}(\yy) \in f^{-1}(B_{\zz,\rr}^{j,\bullet}), \quad \forall j \in [1, a^{-1}]_\BZ, \ \forall \bullet \in \{0, 1, 2\}. 
    \end{equation*}
    (Here, we note that $y_{\zz,\rr}^{j,\bullet} - \zz = y_{0,\rr}^{j,\bullet}$ does not depend on $\zz$.)
\end{lemma}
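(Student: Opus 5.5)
The plan is to reduce the statement to an elementary Jordan curve / degree argument, using the approximate linearity \eqref{eq:linearization} on each small ball $B_{\zz,\rr}^{j,\bullet}$. Write $w \defeq A^{-1}(y_{\zz,\rr}^{j,\bullet} - \zz) + f^{-1}(\yy)$ for the target point and $B \defeq B_{\zz,\rr}^{j,\bullet}$, a Euclidean ball of radius $a\rr/4$ centered at $y \defeq y_{\zz,\rr}^{j,\bullet}$, with $\overline B \subset B_\rr(\zz)$. This containment holds because $\lVert y - \zz\rVert \le 3\rr/4 + a\rr/2$, so $B \subset B_{3\rr/4 + 3a\rr/4}(\zz)$, which lies in $B_\rr(\zz)$ for $a < 1/3$. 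The claim is that $w$ lies in the image $f^{-1}(B)$. Since $f^{-1}$ is an embedding, $f^{-1}(\partial B)$ is a Jordan curve and $f^{-1}(B)$ is the bounded component of its complement. Hence it suffices to show two things: that $f^{-1}(\partial B)$ stays far from $w$, and that $f^{-1}|_{\overline B}$ is homotopic, through maps avoiding $w$ on $\partial B$, to an affine map whose image of $B$ contains $w$.

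First I will quantify the approximation. For $u \in \overline B$ let $L(u) \defeq J^{-1}(u - \yy) + f^{-1}(\yy)$. By \eqref{eq:linearization}, $\lVert f^{-1}(u) - L(u)\rVert \le \delta\lVert u - \yy\rVert \le 2\delta\rr$. Next compare $L$ with the affine map $L_0(u) \defeq A^{-1}(u - \zz) + f^{-1}(\yy)$, which satisfies $L_0(y) = w$. We have $L(u) - L_0(u) = (J^{-1} - A^{-1})(u - \yy) + A^{-1}(\zz - \yy)$. For $\delta$ small depending on $A$, $\lVert J^{-1} - A^{-1}\rVert \le 2\lVert A^{-1}\rVert^2\delta$, and $\lVert\zz - \yy\rVert \le \delta\rr$. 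So $\lVert L - L_0\rVert \le C(A)\delta\rr$ on $\overline B$, and therefore $\lVert f^{-1} - L_0\rVert \le C'(A)\delta\rr$ on $\overline B$.

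Second, $L_0(\partial B)$ is an ellipse around $w$. Its distance from $w$ is at least $\sigma_{\min}(A^{-1})\,a\rr/4 = a\rr/(4\lVert A\rVert)$. I will choose $\delta = \delta(A,a)$ so small that $C'(A)\delta\rr < a\rr/(8\lVert A\rVert)$. Then the straight-line homotopy $h_s \defeq (1-s)L_0 + s f^{-1}$ on $\partial B$ never hits $w$. The winding number of $h_s(\partial B)$ around $w$ is therefore constant in $s$. It equals $\pm 1$ at $s=0$, since $L_0$ is an invertible affine map and $w = L_0(y)$ with $y \in B$. Consequently $f^{-1}(\partial B)$ winds nontrivially around $w$, so $w$ lies in the bounded complementary component of the Jordan curve $f^{-1}(\partial B)$.

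Finally, by the Jordan–Schoenflies theorem (or invariance of domain) applied to the embedding $f^{-1}$ of the closed disk $\overline B$, that bounded component is exactly $f^{-1}(B)$. This gives $w \in f^{-1}(B)$ for every $j$ and every $\bullet$, with the same $\delta(A,a)$, since all estimates are uniform in $j$ and $\bullet$. None of these steps should be a serious obstacle. The only points to check carefully are that the degree/Jordan-curve step is legitimate for a mere topological embedding, and that $\delta$ depends only on $A$ and $a$ once everything is scaled by $\rr$.
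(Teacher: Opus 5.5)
Your proof is correct, but it reaches the conclusion by a somewhat different mechanism from the paper's.

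\textbf{The paper's route.} Let $y$ be the center of $B$. The paper proceeds in two steps.
\begin{enumerate}
\item It shows that $f^{-1}(y)$ lies within $O(\delta\rr)$ of the target point $w$.
\item It shows that $f^{-1}(B)$ contains the Euclidean ball of radius $a\lVert A\rVert^{-1}\rr/8$ about $f^{-1}(y)$. It does this by proving that every $u$ with $\lVert u-y\rVert\ge a\rr/4$ is sent at distance at least $a\lVert A\rVert^{-1}\rr/8$ from $f^{-1}(y)$. The comparison is with the $J$-linearization, and the main term is $\lVert J\rVert^{-1}\lVert u-y\rVert$.
\end{enumerate}
The topological step that turns this boundary separation into containment of the small ball in $f^{-1}(B)$ is left implicit. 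It follows from invariance of domain together with connectedness of the small ball.

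\textbf{Your route.} You fold both error sources, $J$ versus $A$ and $\yy$ versus $\zz$, into one uniform bound $\lVert f^{-1}-L_0\rVert\le C'(A)\delta\rr$ on $\overline B$. Here $L_0$ is the affine model with $L_0(y)=w$. You then finish with homotopy invariance of the winding number.

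\textbf{What each buys.} Your route makes the topological input explicit. It also records the tacit requirement $\overline B\subset B_\rr(\zz)$, i.e.\ $a<1/3$, which the paper uses without comment. The paper's route gives a slightly stronger quantitative fact: $f^{-1}(B)$ contains a ball of definite radius around $f^{-1}(y)$.

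\textbf{A simplification.} Your final step needs neither Jordan--Schoenflies nor injectivity. We know $w\notin f^{-1}(\partial B)$. If also $w\notin f^{-1}(B)$, then $f^{-1}|_{\overline B}$ would null-homotope $f^{-1}|_{\partial B}$ in $\BC\setminus\{w\}$. That contradicts the winding number being $\pm1$.
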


\begin{proof}
    Indeed, 
    \begin{align*}
        \lVert f^{-1}(y_{\zz,\rr}^{j,\bullet}) - A^{-1}(y_{\zz,\rr}^{j,\bullet} - \zz) - f^{-1}(\yy)\rVert &\le \lVert f^{-1}(y_{\zz,\rr}^{j,\bullet}) - J^{-1}(y_{\zz,\rr}^{j,\bullet} - \yy) - f^{-1}(\yy)\rVert \\
        &+ \lVert(J^{-1} - A^{-1})(y_{\zz,\rr}^{j,\bullet} - \yy)\rVert + \lVert A^{-1}(\yy - \zz)\rVert \\
        &\le \delta\lVert y_{\zz,\rr}^{j,\bullet} - \yy\rVert + \lVert J^{-1} - A^{-1}\rVert \lVert y_{\zz,\rr}^{j,\bullet} - \yy\rVert + \lVert A^{-1}\rVert \lVert\yy - \zz\rVert \\
        &\le 2\delta\rr + 2\lVert J^{-1} - A^{-1}\rVert\rr + \delta\lVert A^{-1}\rVert\rr. 
    \end{align*}
    Since $\lVert J^{-1} - A^{-1}\rVert \to 0$ as $\delta \to 0$, it suffices to verify that $B_{a\lVert A\rVert^{-1}\rr/8}(f^{-1}(y_{\zz,\rr}^{j,\bullet})) \subset f^{-1}(B_{\zz,\rr}^{j,\bullet})$ whenever $\delta$ is sufficiently small (how small depends only on $A$ and $a$). (Here, we note that $B_{\zz,\rr}^{j,\bullet}$ has radius $a\rr/4$.) It suffices to verify that for each $y \in B_\rr(\zz)$ with $\lVert y - y_{\zz,\rr}^{j,\bullet}\rVert \ge a\rr/4$, we have $\lVert f^{-1}(y) - f^{-1}(y_{\zz,\rr}^{j,\bullet})\rVert \ge a\lVert A\rVert^{-1}\rr/8$. Indeed, 
    \begin{align*}
        \lVert f^{-1}(y) - f^{-1}(y_{\zz,\rr}^{j,\bullet})\rVert &\ge \lVert J^{-1}(y - \yy) - J^{-1}(y_{\zz,\rr}^{j,\bullet} - \yy)\rVert - \lVert f^{-1}(y) - J^{-1}(y - \yy) - f^{-1}(\yy)\rVert \\
        &- \lVert f^{-1}(y_{\zz,\rr}^{j,\bullet}) - J^{-1}(y_{\zz,\rr}^{j,\bullet} - \yy) - f^{-1}(\yy)\rVert \\
        &\ge \lVert J\rVert^{-1} \lVert y - y_{\zz,\rr}^{j,\bullet}\rVert - \delta\lVert y - \yy\rVert - \delta\lVert y_{\zz,\rr}^{j,\bullet} - \yy\rVert \\
        &\ge a(\lVert A\rVert + \delta)^{-1}\rr/4 - 4\delta\rr. 
    \end{align*}
    This completes the proof. 
\end{proof}

\subsection{Proof of \Cref{lem:matrix}}\label{subsection:proof-matrix}

Fix $A \in \mathop{\mathrm{GL}}(2, \BR)$. Fix $a \in (0, 1)$ to be chosen later. Let $\delta = \delta(A, a) > 0$ be as in \Cref{lem:linearization}. 

It suffices to show that, almost surely given $\Phi_1$ and $\xx$, the conditional probability is zero that there exists an $\eta$-quasisymmetric embedding $f$ from $(\BD, D_{\Phi_1})$ into $(\BC, D_{\Phi_2})$ for which $Df(\xx)$ exists, is invertible, and satisfies $\lVert Df(\xx) - A\rVert \le \delta$. Henceforth, we fix $\Phi_1$ and $\xx$.

For each $\rr > 0$ and $j \in [1, a^{-1}]_\BZ$, set $I_\rr^j$ to be the admissible interval
\begin{equation*}
    \left\lbrack1/\eta\!\left(\frac{D_{\Phi_1}(A^{-1}(y_{0,\rr}^{j,0}) + \xx, A^{-1}(y_{0,\rr}^{j,2}) + \xx)}{D_{\Phi_1}(A^{-1}(y_{0,\rr}^{j,0}) + \xx, A^{-1}(y_{0,\rr}^{j,1}) + \xx)}\right), \eta\!\left(\frac{D_{\Phi_1}(A^{-1}(y_{0,\rr}^{j,0}) + \xx, A^{-1}(y_{0,\rr}^{j,1}) + \xx)}{D_{\Phi_1}(A^{-1}(y_{0,\rr}^{j,0}) + \xx, A^{-1}(y_{0,\rr}^{j,2}) + \xx)}\right)\right\rbrack.
\end{equation*}
Note that $I_\rr^j$ is almost surely determined by $\Phi_1$ and $\xx$. 

Suppose by way of contradiction that there exists an $\eta$-quasisymmetric embedding $f$ from $(\BD, D_{\Phi_1})$ into $(\BC, D_{\Phi_2})$ for which $Df(\xx)$ exists, is invertible, and satisfies $\lVert Df(\xx) - A\rVert \le \delta$. Set $\yy \defeq f(\xx)$. Then $Df^{-1}(\yy) = Df(\xx)^{-1}$. By the definition of differentiability, \eqref{eq:linearization} holds (with $Df(\xx)$ in place of $J$) for all sufficiently small $\rr > 0$ and all $\zz \in \BC$ with $\lVert\yy - \zz\rVert \le \delta\rr$. For such $\rr$ and $\zz$, \Cref{lem:linearization} implies that 
\begin{equation*}
    f(A^{-1}(y_{0,\rr}^{j,\bullet}) + \xx) \in B_{\zz,\rr}^{j,\bullet}, \quad \forall j \in [1, a^{-1}]_\BZ, \ \forall \bullet \in \{0, 1, 2\}. 
\end{equation*}
Since $f$ is $\eta$-quasisymmetric, we must have 
\begin{equation*}
    \frac{D_{\Phi_2}(f(A^{-1}(y_{0,\rr}^{j,0}) + \xx), f(A^{-1}(y_{0,\rr}^{j,1}) + \xx))}{D_{\Phi_2}(f(A^{-1}(y_{0,\rr}^{j,0}) + \xx), f(A^{-1}(y_{0,\rr}^{j,2}) + \xx))} \in I_\rr^j, \quad \forall j \in [1, a^{-1}]_\BZ. 
\end{equation*}
Thus, by the definition of the event $F_{\zz,\rr,(I_\rr^1, \dots, I_\rr^{\lfloor a^{-1}\rfloor})}$, we conclude that it does not occur. To lighten notation, we shall write $F_{\zz,\rr} \defeq F_{\zz,\rr,(I_\rr^1, \dots, I_\rr^{\lfloor a^{-1}\rfloor})}$. 

On the other hand, by \Cref{lem:F-properties,lem:independence-across-scales}, and a union bound, we may choose $a$ sufficiently small, depending only on $\eta$, so that with polynomially high probability as $\varepsilon \to 0$, for each $\zz \in B_{1/\varepsilon}(0) \cap (\varepsilon^3\BZ)^2$, there exists $\rr \in [\varepsilon^2, \varepsilon] \cap \{2^{-n}\}_{n \in \BZ}$ such that the event $F_{\zz,\rr}$ occurs. Combining this with the Borel--Cantelli lemma, we obtain that this holds for all sufficiently small $\varepsilon \in \{2^{-n}\}_{n \in \BZ}$. Thus, for each $\yy \in \BC$, there exist arbitrarily small $\rr \in \{2^{-n}\}_{n \in \BZ}$ and $\zz \in \BC$ with $\lVert\yy - \zz\rVert \le \delta\rr$ for which the event $F_{\zz,\rr}$ occurs, contradicting the preceding discussion. This completes the proof. \qed

\subsection{Proofs of \Cref{thm:main-ind,thm:main-aut}}\label{subsection:main-proof}

\begin{lemma}\label{lem:bi-Lipschitz}
    Let $(\widehat\BC, \Phi; 0, 1, \infty)$ be a three-pointed $\sqrt{8/3}$-LQG sphere with unit area. Then, almost surely, for every pair of bounded open subsets $U \Subset V \subset \BC$, the metrics $D_\Phi(\bullet, \bullet; V)|_{U \times U}$ and $D_\Phi(\bullet, \bullet)|_{U \times U}$ are bi-Lipschitz equivalent. The same statement holds with a whole-plane GFF in place of $\Phi$. 
\end{lemma}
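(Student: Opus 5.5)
The inequality $D_\Phi(\bullet,\bullet)\le D_\Phi(\bullet,\bullet;V)$ holds trivially, so only the reverse bound $D_\Phi(x,y;V)\le C\,D_\Phi(x,y)$ for $x,y\in U$ needs proof. By monotonicity in $U$ and $V$, it suffices to treat countably many pairs. Take $U$ a finite union of rational Euclidean balls and $V$ a rational open set with $U\Subset V$. Then almost surely every pair $U\Subset V$ is sandwiched as $U\subset U'\Subset V'\subset V$ with $U',V'$ from this countable family, and bi-Lipschitz equivalence on $U'\times U'$ with $V'$ implies it on $U\times U$ with $V$. So it suffices to fix deterministic $U\Subset V$.

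For fixed $U\Subset V$, I split pairs $x,y\in U$ according to whether $D_\Phi(x,y)$ is small or large. Set
\begin{equation*}
    \rho \defeq D_\Phi(U, \BC\setminus V) \wedge \tfrac12 D_\Phi(\overline U, \partial V).
\end{equation*}
This is almost surely positive, because $D_\Phi$ induces the Euclidean topology, $\overline U$ is compact, and $\overline U\cap\partial V=\emptyset$. For the sphere, this uses that $\overline V$ stays away from $\infty$ after possibly shrinking, and that $D_\Phi$ is finite and continuous on the bounded set $\overline V$.

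Suppose first that $D_\Phi(x,y)<\rho$. Any $D_\Phi$-geodesic from $x$ to $y$ has length less than $\rho$ and so cannot leave $V$. Hence $D_\Phi(x,y;V)=D_\Phi(x,y)$. Geodesics exist because $D_\Phi$ is a length metric on a boundedly compact space.

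Suppose instead that $D_\Phi(x,y)\ge\rho$. Then
\begin{equation*}
    D_\Phi(x,y;V)\le \sup_{u,v\in U} D_\Phi(u,v;V) \eqdef S,
\end{equation*}
and it suffices to show $S<\infty$ almost surely, which gives the constant $C=S/\rho\vee 1$. To bound $S$, I would cover $\overline U$ by finitely many small Euclidean balls $B_i$ with $\overline{B_{2\cdot}}\subset V$, chained so that consecutive balls overlap, and use a connected open set $W$ with $\overline U\subset W\Subset V$. The main obstacle is that $U$ need not be connected. I would handle this by replacing $U$ with a connected enlargement: choose a connected open $W$ with $U\subset W\Subset V$ when $V$ is connected. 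When $V$ is disconnected, points in different components of $V$ have $D_\Phi(\bullet,\bullet;V)=\infty$, and bi-Lipschitz equivalence fails. So one must either interpret the statement for connected $V$ or work componentwise. I would restrict to connected $V$, which is the case used later, and note this.

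Given connected $W$, the internal metric $D_\Phi(\bullet,\bullet;V)$ restricted to $\overline W$ is a continuous function on a compact set. This follows from locality and the fact that internal metrics of LQG on open sets induce the Euclidean topology, by axiom (I) applied to $\Phi|_V$. Hence it is bounded, and $S<\infty$.

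For the whole-plane GFF the same argument applies verbatim. For the sphere, note that the topology and length-space properties hold on $\BC$, or alternatively transfer via \Cref{lem:three-pointed-sphere}, since the statement is local and determined by $\Phi|_{V'}$ for bounded $V'\Subset\BC\setminus\{0,1\}$ after a further countable covering.
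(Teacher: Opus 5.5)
Your argument is correct, and the caveat you raise is genuine. If $U$ meets two components of $V$, then $D_\Phi(x,y;V)=\infty$ for some $x,y\in U$, so the lemma needs $V$ connected (or $U$ inside a single component of $V$). The paper's argument implicitly requires this too, and its applications only need connected $V$.

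Your route differs from the paper's. The paper relies on one quantitative input: almost surely, the $D_\Phi$-length of the shortest path in $V\setminus U$ separating $\partial U$ from $\partial V$ is at most $M\,D_\Phi(\partial U,\partial V)$. A geodesic between points of $U$ that exits $V$ has length at least $D_\Phi(\partial U,\partial V)$. Splicing in the separating path between the geodesic's first and last hitting times then gives a path in $V$ of length at most $(M+1)D_\Phi(x,y)$.

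You instead split by scale. When $D_\Phi(x,y)<\rho$, near-geodesics cannot reach $\BC\setminus V$, so the two metrics coincide exactly. For the remaining pairs you only need the internal diameter $S$ of $U$ in $V$ to be finite, which yields the constant $S/\rho\vee 1$.

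What each buys: your version uses nothing beyond the fact that $D_\Phi$ is a length metric inducing the Euclidean topology. It needs no hypothesis on the topology of $V\setminus\overline U$; a single separating path as in the paper does not exist when, say, $U$ is an annulus inside an annulus $V$. The paper's version gives a constant expressed through an around/across ratio, the kind of quantity for which LQG tail estimates are available, at the price of leaving the existence of the separating path implicit.

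A few simplifications are available to you. Geodesics are not needed in the short-distance case; any path of length less than $\rho$ works. Finiteness and continuity of $D_\Phi(\bullet,\bullet;V)$ on a connected $V$ follow from the same local-agreement argument plus chaining of small balls, without invoking the axioms for $\Phi|_V$. The whole argument then runs on the single almost sure event where $D_\Phi$ is a length metric inducing the Euclidean topology, so the countable reduction is unnecessary (if kept, it would require choosing $V'$ connected). Your alternative of transferring via \Cref{lem:three-pointed-sphere} would not work as stated, since $D_\Phi(x,y)$ is not determined by $\Phi|_{V'}$, but the direct argument makes it superfluous.
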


\begin{proof}
    Almost surely, for every pair of bounded open subsets $U \Subset V \subset \BC$, there exists $M \ge 1$ such that $D_\Phi(\text{around } V \setminus U) \le M D_\Phi(\partial U, \partial V)$, where $D_\Phi(\text{around } V \setminus U)$ denotes the infimum of the $D_\Phi$-lengths of paths in $V \setminus U$ which disconnect $\partial U$ and $\partial V$. This implies that
    \begin{equation*}
        D_\Phi(\bullet, \bullet)|_{U \times U} \le D_\Phi(\bullet, \bullet; V)|_{U \times U} \le (M + 1) D_\Phi(\bullet, \bullet)|_{U \times U}. 
    \end{equation*}
    This completes the proof. 
\end{proof}

\begin{proof}[Proof of \Cref{thm:main-ind}]
    Let $(\widehat\BC, \Phi_1; 0, 1, \infty)$ and $(\widehat\BC, \Phi_2; 0, 1, \infty)$ be independent three-pointed $\sqrt{8/3}$-LQG spheres with unit area. Let $\widetilde\Phi_1$ and $\widetilde\Phi_2$ be independent whole-plane GFFs. By \Cref{lem:nonexistence}, almost surely, for every open subset $U \subset \BC$, there is no quasisymmetric embedding of $(U, D_{\widetilde\Phi_1})$ into $(\BC, D_{\widetilde\Phi_2})$. Combining this with \Cref{lem:bi-Lipschitz}, almost surely, for every pair of bounded open subsets $U_1 \Subset V_1 \subset \BC$ and $U_2 \Subset V_2 \subset \BC$, there is no quasisymmetric embedding of $(U_1, D_{\widetilde\Phi_1}(\bullet, \bullet; V_1))$ into $(U_2, D_{\widetilde\Phi_2}(\bullet, \bullet; V_2))$. Combining this with \Cref{lem:three-pointed-sphere}, we obtain that, almost surely, for every pair of dyadic domains $U_1 \Subset V_1 \Subset \BC \setminus \{0,1\}$ and $U_2 \Subset V_2 \Subset \BC \setminus \{0,1\}$, there is no quasisymmetric embedding of $(U_1, D_{\Phi_1}(\bullet, \bullet; V_1))$ into $(U_2, D_{\Phi_2}(\bullet, \bullet; V_2))$. (Recall that $U \subset \BC$ is a dyadic domain if there exists a finite collection of dyadic squares such that $U$ is the interior of the union of the closures of these squares.) Applying \Cref{lem:bi-Lipschitz} again, we conclude that, almost surely, for every pair of dyadic domains $U_1 \Subset \BC \setminus \{0,1\}$ and $U_2 \Subset \BC \setminus \{0,1\}$, there is no quasisymmetric embedding of $(U_1, D_{\Phi_1})$ into $(U_2, D_{\Phi_2})$. This implies that, almost surely, there is no quasisymmetric homeomorphism from $(\widehat\BC, D_{\Phi_1})$ to $(\widehat\BC, D_{\Phi_2})$. This completes the proof.
\end{proof}

\begin{proof}[Proof of \Cref{thm:main-aut}]
    Let $(\widehat\BC, \Phi; 0, 1, \infty)$ be a three-pointed $\sqrt{8/3}$-LQG sphere with unit area. Let $\Phi_1$ and $\Phi_2$ be independent whole-plane GFFs. For an open subset $V \subset \BC$, write $\Phi_1 = \Phi_{1,V} + \kH_{1,V}$, where $\Phi_{1,V}$ is a zero-boundary GFF on $V$, $\kH_{1,V}$ is almost surely harmonic on $V$, and $\Phi_{1,V}$ and $\kH_{1,V}$ are independent (cf.~the domain Markov property of the whole-plane GFF). Note that, almost surely, for every bounded open subset $U \Subset V$, since $\kH_{1,V}|_U$ is bounded above and below, the metrics $D_{\Phi_1}(\bullet, \bullet)|_{U \times U}$ and $D_{\Phi_{1,V}}(\bullet, \bullet)|_{U \times U}$ are bi-Lipschitz equivalent. The same statement holds with $\Phi_2$ in place of $\Phi_1$. 

    By \Cref{lem:nonexistence}, almost surely, for every open subset $U \subset \BC$, there is no quasisymmetric embedding of $(U, D_{\Phi_1})$ into $(\BC, D_{\Phi_2})$. Combining this with the discussion in the preceding paragraph, we conclude that, almost surely, for every pair of bounded open subsets $U_1 \Subset V_1 \subset \BC$ and $U_2 \Subset V_2 \subset \BC$, there is no quasisymmetric embedding of $(U_1, D_{\Phi_{1,V_1}})$ into $(U_2, D_{\Phi_{2,V_2}})$. Since $\Phi_{1,V_1}$ and $\Phi_{1,V_2}$ are independent, this implies that, almost surely, for every pair of bounded open subsets $U_1 \Subset V_1 \subset \BC$ and $U_2 \Subset V_2 \subset \BC$ with $V_1$ and $V_2$ disjoint, there is no quasisymmetric embedding of $(U_1, D_{\Phi_{1,V_1}})$ into $(U_2, D_{\Phi_{1,V_2}})$. Since $D_{\Phi_{1,V_1}}(\bullet, \bullet)|_{U_1 \times U_1}$ and $D_{\Phi_1}(\bullet, \bullet)|_{U_1 \times U_1}$ are bi-Lipschitz equivalent, and $D_{\Phi_{1,V_2}}(\bullet, \bullet)|_{U_2 \times U_2}$ and $D_{\Phi_1}(\bullet, \bullet)|_{U_2 \times U_2}$ are bi-Lipschitz equivalent, we deduce that, almost surely, for every pair of bounded open subsets $U_1 \Subset V_1 \subset \BC$ and $U_2 \Subset V_2 \subset \BC$ with $V_1$ and $V_2$ disjoint, there is no quasisymmetric embedding of $(U_1, D_{\Phi_1})$ into $(U_2, D_{\Phi_1})$. Combining this with \Cref{lem:bi-Lipschitz}, almost surely, for every pair of bounded open subsets $U_1 \Subset V_1 \subset \BC$ and $U_2 \Subset V_2 \subset \BC$ with $V_1$ and $V_2$ disjoint, there is no quasisymmetric embedding of $(U_1, D_{\Phi_1}(\bullet, \bullet; V_1))$ into $(U_2, D_{\Phi_1}(\bullet, \bullet; V_2))$. By \Cref{lem:three-pointed-sphere}, it follows that, almost surely, for every pair of dyadic domains $U_1 \Subset V_1 \Subset \BC \setminus \{0, 1\}$ and $U_2 \Subset V_2 \Subset \BC \setminus \{0, 1\}$ with $V_1$ and $V_2$ disjoint, there is no quasisymmetric embedding of $(U_1, D_\Phi(\bullet, \bullet; V_1))$ into $(U_2, D_\Phi(\bullet, \bullet; V_2))$. Applying \Cref{lem:bi-Lipschitz} once more, we obtain that, almost surely, for every pair of dyadic domains $U_1 \Subset V_1 \Subset \BC \setminus \{0, 1\}$ and $U_2 \Subset V_2 \Subset \BC \setminus \{0, 1\}$ with $V_1$ and $V_2$ disjoint, there is no quasisymmetric embedding of $(U_1, D_\Phi)$ into $(U_2, D_\Phi)$. This implies that, almost surely, $(\widehat\BC, D_\Phi)$ has no nontrivial quasisymmetric automorphism. This completes the proof.
\end{proof}

\section{The conformal structure is determined}\label{section:measurability}

In the present section, we sketch how to modify the arguments of \Cref{section:quasiconformality,section:main-proof} to obtain a new proof of \Cref{cor:measurability}.

First, we observe that the argument of \Cref{section:quasiconformality} can be carried out without assuming \Cref{cor:measurability}, provided that one uses the following input instead.

\begin{lemma}
    Let $(\BC, \Phi; 0, \infty)$ be a $\sqrt{8/3}$-LQG cone. Suppose that $\tau$ is a stopping time for the forward or backward filled metric ball exploration. Then the $\sqrt{8/3}$-LQG surfaces parameterized by $B_\tau^\bullet(0; D_\Phi)$ and $\BC \setminus B_\tau^\bullet(0; D_\Phi)$ are conditionally independent given the boundary length of $B_\tau^\bullet(0; D_\Phi)$. (In particular, the conformal structures of $B_\tau^\bullet(0; D_\Phi)$ and $\BC \setminus B_\tau^\bullet(0; D_\Phi)$ are conditionally independent given this boundary length.)
\end{lemma}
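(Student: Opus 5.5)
The plan is to reduce the statement to the Markov property of the breadth-first (filled metric ball) exploration for deterministic times, which was recorded earlier. That property says the surfaces \eqref{eq:filled-metric-ball-cone} and \eqref{eq:Brownian-horn} are conditionally independent given $Y_{-t}$, with the law of the outside a Brownian disk with boundary length $Y_{-t}$ and infinite area. We then upgrade it to stopping times by a discretization argument, as in the strong Markov property of Feller processes.

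\textbf{Step 1: discrete stopping times.} First consider a stopping time $\tau$ taking values in a countable set $\{t_k\}$. On each event $\{\tau = t_k\}$, which is measurable with respect to the surface parameterized by $B_{t_k}^\bullet(0;D_\Phi)$, the deterministic-time statement gives the following: the surface $\BC \setminus B_{t_k}^\bullet(0;D_\Phi)$ is, conditionally on the inside, a Brownian disk with boundary length $Y_{-t_k}$ and infinite area. Summing over $k$ shows that, conditionally on the inside surface at time $\tau$, the outside surface is a Brownian disk with boundary length $Y_{-\tau}$. This is exactly conditional independence given $Y_{-\tau}$. For the backward exploration (in the cone, exploring inward from $\infty$), the same works by applying the analogous deterministic-time Markov property for the backward direction, equivalently the conditional independence of \eqref{eq:filled-metric-ball-cone} and \eqref{eq:Brownian-horn} read in the other order.

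\textbf{Step 2: approximation.} For a general $\tau$, we approximate by $\tau_n \defeq 2^{-n}\lceil 2^n\tau\rceil$ in the forward case, and by rounding down in the backward case, so that each $\tau_n$ is again a stopping time. We then pass to the limit. We use that $t \mapsto B_t^\bullet$ is monotone and that the embedded domains converge: $B_{\tau_n}^\bullet \downarrow \overline{B_\tau^\bullet}$ up to boundary. We also use that $Y$ is càdlàg, so $Y_{-\tau_n} \to Y_{-\tau}$ from the correct side. Finally, the Brownian disk law depends continuously on the boundary length via scaling. Together these identify the limiting conditional law of the outside.

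\textbf{Main obstacle.} The difficulty lies in Step 2: making "the LQG surface parameterized by a region" a measurable object in which these limits make sense, and showing that the outside surfaces at $\tau_n$ converge to the outside surface at $\tau$. I would handle this through the metric-measure characterization available from the Brownian plane construction, where the conformal structure is not assumed to be determined by the metric. The conditional independence is checked on test functions of finitely many field observables restricted to compact subsets of the open complement, for which convergence is clear because $B_{\tau_n}^\bullet \to B_\tau^\bullet$. A monotone class argument then extends it to full conditional independence. The parenthetical claim about conformal structures then follows immediately, since the conformal structure is a measurable function of the LQG surface.
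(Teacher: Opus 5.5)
\textbf{There is a genuine gap, and it is in the input to Step 1, not in the discretization.} The deterministic-time conditional independence of \eqref{eq:filled-metric-ball-cone} and \eqref{eq:Brownian-horn} that you invoke is a statement about metric measure spaces coming from the Brownian plane. The lemma is about the $\sqrt{8/3}$-LQG surfaces, i.e.\ the field modulo conformal coordinate change, which carry the conformal structure in addition to the metric and measure.

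Conditional independence of the metric measure spaces upgrades to conditional independence of the LQG surfaces only if each LQG surface is a measurable function of its metric measure space. That is \Cref{cor:measurability}, proved in \cite{MR4242633}. It is exactly what \Cref{section:measurability} is reproving, and it is why this lemma is introduced there as a \emph{replacement} for the metric Markov property used in \Cref{section:quasiconformality}. If you allow yourself \cite{MR4242633}, your reduction goes through (modulo the points in the next paragraph), but then the lemma no longer serves its purpose.

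Your remark that the conformal structure is ``not assumed to be determined by the metric'' identifies the issue but does not resolve it. Testing conditional independence on field observables in Step 2 presupposes a field-level Markov property at deterministic times, which you never obtain. A purely metric statement cannot rule out correlations between the conformal data of the two pieces. The missing ingredient is the one the paper uses. The filled metric ball exploration of the cone coincides with QLE$(8/3,0)$ parameterized by distance, and the forward and backward Markov properties of QLE$(8/3,0)$ from \cite{MR4050102,MR4348679} are statements about quantum surfaces. Together these give the conclusion at the level of LQG surfaces.

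\textbf{Even granting a field-level input at deterministic times, Step 2 needs repair.} Since $Y_s$ is c\`adl\`ag in $s=-t$ with only positive jumps, the maps $t\mapsto Y_{-t}$ and $t\mapsto B_t^\bullet$ are left-continuous in $t$, and $Y_{-t}$ drops when a bubble is swallowed. Rounding up in the forward case therefore approaches from the wrong side. You get $Y_{-\tau_n}\to\lim_{t\downarrow\tau}Y_{-t}$, and $\bigcap_n B_{\tau_n}^\bullet$ contains any bubble swallowed at time $\tau$, so it is not $\overline{B_\tau^\bullet}$ up to boundary. You would need to show that $\tau$ is a.s.\ not a swallowing time, or else formulate the result for the right-continuous hull.

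Also, ``the same works'' in the backward direction is too quick. Given the outside at a deterministic time $t$, the law of the pointed inside depends on $t$ as well as on $Y_{-t}$, since its radius is $t$. Summing over $\{\tau=t_k\}$ therefore only shows that the inside is conditionally independent of the outside given the pair $(\tau,Y_{-\tau})$, not given $Y_{-\tau}$ alone.
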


This follows from the fact that the filled metric ball exploration of a $\sqrt{8/3}$-LQG cone agrees with the QLE$(8/3, 0)$ process, parameterized by distance, together with the Markovian properties of the forward and backward QLE$(8/3, 0)$ explorations (cf.~\cite{MR4050102,MR4348679}).

The following is an immediate consequence of \Cref{lem:quasiconformality-GFF-GFF}.

\begin{lemma}\label{lem:measurability-quasiconformal}
    Let $\Phi_1$ and $\Phi_2$ be whole-plane GFFs. Suppose that $\Phi_1$ and $\Phi_2$ are coupled so that $(\BC, D_{\Phi_1})$ and $(\BC, D_{\Phi_2})$ are almost surely isometric. Let $f \colon \BC \to \BC$ denote the induced isometry. Then $f$ is almost surely quasiconformal. (Here, we note that there is almost surely a unique isometry between $(\BC, D_{\Phi_1})$ and $(\BC, D_{\Phi_2})$, since their isometry groups are almost surely trivial.)
\end{lemma}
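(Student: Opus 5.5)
The plan is to deduce this directly from \Cref{lem:quasiconformality-GFF-GFF}, once we know that an isometry is a quasisymmetric map with a deterministic distortion function. Indeed, if $f \colon (\BC, D_{\Phi_1}) \to (\BC, D_{\Phi_2})$ is an isometry, then
\begin{equation*}
    \frac{D_{\Phi_2}(f(x), f(y))}{D_{\Phi_2}(f(x), f(z))} = \frac{D_{\Phi_1}(x, y)}{D_{\Phi_1}(x, z)}
\end{equation*}
for all distinct $x, y, z$. Hence $f$ is $\eta_0$-quasisymmetric with $\eta_0(t) \defeq t$, which is a fixed deterministic distortion function. Moreover, since $D_{\Phi_1}$ and $D_{\Phi_2}$ both induce the Euclidean topology, $f$ is a homeomorphism $\BC \to \BC$. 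In particular, it is an $\eta_0$-quasisymmetric embedding of $(U, D_{\Phi_1})$ into $(\BC, D_{\Phi_2})$ for $U = \BC$.

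There is one small point. \Cref{lem:quasiconformality-GFF-GFF} is stated for whole-plane GFFs with fixed additive constants, while here $\Phi_1, \Phi_2$ are defined only modulo constants. I will fix the additive constants by some measurable normalization, for example setting the circle average over $\partial\BD$ to zero. By Weyl scaling, changing the additive constant of $\Phi_i$ multiplies $D_{\Phi_i}$ by a positive constant. This preserves $\eta_0$-quasisymmetry, since the ratios are unchanged, so $f$ remains $\eta_0$-quasisymmetric between the normalized metrics even though it may no longer be an isometry.

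Next I apply \Cref{lem:quasiconformality-GFF-GFF} with $\eta = \eta_0$ to the coupled pair of normalized whole-plane GFFs. Almost surely, every $\eta_0$-quasisymmetric embedding of $(\BC, D_{\Phi_1})$ into $(\BC, D_{\Phi_2})$ is $K(\eta_0)$-quasiconformal. Since the corollary holds simultaneously for all such embeddings, there is no measurability issue about $f$ being random or depending on the coupling. Therefore $f$ is almost surely $K(\eta_0)$-quasiconformal.

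There is no substantive obstacle. The only care needed is the additive-constant normalization and the observation that \Cref{lem:quasiconformality-GFF-GFF} allows an arbitrary coupling and $U = \BC$. The uniqueness of the isometry noted parenthetically in the statement is not needed for quasiconformality; it only makes ``the induced isometry'' well defined.
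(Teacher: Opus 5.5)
Your proposal is correct and matches the paper's route: the paper records this lemma as an immediate consequence of \Cref{lem:quasiconformality-GFF-GFF}, because an isometry is $\eta$-quasisymmetric with $\eta(t) = t$, and your normalization of the additive constants via Weyl scaling is the right routine bookkeeping. The paper makes one further point in this context, which does not affect your deduction: to avoid circularity in the new proof of \Cref{cor:measurability}, \Cref{lem:quasiconformality-GFF-GFF} itself must be established without assuming that the metric determines the conformal structure, and the paper handles this via the Markov property of QLE$(8/3,0)$.
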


We continue to use the notation of \Cref{lem:measurability-quasiconformal}. By fixing orientations, we may assume without loss of generality that $f$ is orientation-preserving. It remains to show that $f$ is almost surely conformal. Recall that a quasiconformal mapping is differentiable, with invertible Jacobian matrix, at Lebesgue almost every point, and that it is conformal if and only if $\overline\partial f = 0$ Lebesgue almost everywhere. Let $\xx$ be sampled uniformly from Lebesgue measure on $\BD$, independently of everything else. Then $Df(\xx)$ almost surely exists and satisfies $\det(Df(\xx)) > 0$. As in the proof of \Cref{lem:nonexistence}, it suffices to prove the following lemma.

\begin{lemma}\label{lem:measurability-matrix}
    For each invertible matrix $A \in \mathop{\mathrm{GL}}(2, \BR)$ with $\det(A) > 0$ and $A\II \neq \II A$, where $\II \defeq \begin{pmatrix} 0 & -1 \\ 1 & 0 \end{pmatrix}$, there exists $\delta = \delta(A) > 0$ such that, with probability zero, $Df(\xx)$ exists, is invertible, and satisfies $\lVert Df(\xx) - A\rVert \le \delta$.
\end{lemma}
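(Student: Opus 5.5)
The plan is to rerun the argument of \Cref{subsection:proof-matrix}, with one change. There we used an obstruction that a quasisymmetric map cannot carry; here we replace it by a \emph{statistical} obstruction that an isometry cannot carry. The feature to exploit is that the metric of a whole-plane GFF has a law invariant under complex-affine maps (by the coordinate change axiom and Weyl scaling). When $A\II\neq\II A$, the linear map $A$ is not conformal, and pulling back a GFF metric by such a map should change its law. Since $f$ is an isometry, $D_{\Phi_2}(f(u),f(v))=D_{\Phi_1}(u,v)$. So if $Df(\xx)\approx A$, then for small $\rr$ the $D_{\Phi_2}$-configuration on the Euclidean annulus around $\yy=f(\xx)$ coincides with the $D_{\Phi_1}$-configuration on the elliptical annulus $A^{-1}(A_{\rr/2,\rr}(0))+\xx$. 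We will show that the long-run frequencies of suitable events along scales must then agree, and that for a GFF they do not.

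\textbf{Step 1: a distinguishing event.} Fix a finite configuration of small balls $B^1,\dots,B^m\subset B_1(0)$ and an event $G$ on the matrix of $D_\Phi$-distances between these balls, together with internal-geodesic confinement as in condition~\eqref{it:def-E-1}. We want $G$ to be determined by $\Phi|_{B_1(0)}$ modulo additive constant and to have a margin: there should be open and closed versions $G^{\circ}\subset G^{\bullet}$, obtained by slightly enlarging and shrinking the balls, so that:
\begin{itemize}
\item small perturbations of the points inside the balls do not change the outcome;
\item $\BP[G^\bullet\text{ at the Euclidean configuration}] < \BP[G^\circ\text{ for the configuration } A^{-1}(B^k)]-c$ for some $c>0$, or the same inequality with the two sides swapped.
\end{itemize}
This is the step I expect to be the main obstacle. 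The first attempt is to argue by contradiction: if no such $G$ existed, the law of $D_\Phi(A^{-1}\cdot,A^{-1}\cdot)$ restricted to balls would agree with that of $D_\Phi$. Since the LQG metric almost surely determines the field modulo constant (and locality reduces this to local statements), this would give $\Phi\circ A^{-1}\overset{d}{=}\Phi$ modulo constants. That is false, because the covariance $-\log\lVert A^{-1}(u-v)\rVert$ is not $-\log\lVert u-v\rVert$ plus a constant when $A$ is non-conformal. The margins then come from continuity of these probabilities in the configuration, using the bump-function absolute-continuity trick from \Cref{lem:E-properties}.

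\textbf{Step 2: frequencies along scales.} We then proceed in three parts.
\begin{enumerate}
\item Using \Cref{lem:independence-across-scales} (applied in both directions, to $G$ and to its complement), show that with polynomially high probability as $\varepsilon\to0$ the following holds simultaneously for all $\zz\in B_{1/\varepsilon}(0)\cap(\varepsilon^3\BZ)^2$: the fraction of dyadic $\rr\in[\varepsilon^2,\varepsilon]$ for which $G^\bullet$ holds for $\Phi_2$ at the configuration $\zz+\rr B^k$ is at most $\BP[G^\bullet]+c/3$. This uses a union bound over the grid and a large-deviation form of that lemma, obtained by grouping scales into blocks as in \Cref{lem:independence-across-bands-moduli}.
\item Similarly, for $\Phi_1$ at the fixed point $\xx$, which is independent of $\Phi_1$, the fraction of scales at which $G^\circ$ holds at $\xx+\rr A^{-1}(B^k)$ is at least $\BP[G^\circ_A]-c/3$.
\item Pass to all sufficiently small $\varepsilon$ by Borel--Cantelli.
\end{enumerate}

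\textbf{Step 3: contradiction.} Suppose $\lVert Df(\xx)-A\rVert\le\delta$. By \Cref{lem:linearization} (with $\delta$ small, depending on $A$ and the configuration), for all small $\rr$ and all grid points $\zz$ with $\lVert\yy-\zz\rVert\le\delta\rr$, the map $f$ sends the shrunken balls $\xx+\rr A^{-1}(B^k)$ into $\zz+\rr B^k$. Because $f$ is an exact isometry and the events have margins, every scale at which $G^\circ$ holds for $\Phi_1$ at $\xx$ is a scale at which $G^\bullet$ holds for $\Phi_2$ at $\zz$. Comparing the frequencies from Step 2 then gives $\BP[G^\circ_A]-c/3\le\BP[G^\bullet]+c/3$, contradicting the gap $c$ from Step 1. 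If the inequality in Step 1 runs the other way, we use $f^{-1}$, or equivalently swap the roles of the two events. This shows that the event in \Cref{lem:measurability-matrix} has probability zero.
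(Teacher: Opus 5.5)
\textbf{Step~1 is not established.} Your architecture is the paper's: linearize $f$ at $\xx$, push small test configurations through the exact isometry, count good scales with \Cref{lem:independence-across-scales} and a union bound over grid centers $\zz$ near $\yy$, then apply Borel--Cantelli. But Step~1, which is the heart of the lemma, has a gap.

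Suppose $D_\Phi(A^{-1}\cdot,A^{-1}\cdot)$ and $D_\Phi$ had the same law modulo scaling. Applying the almost surely defined map from metric to field would then only give \emph{some} whole-plane GFF $\Psi$ with $D_\Psi=D_\Phi(A^{-1}\cdot,A^{-1}\cdot)$. It would not give $\Psi=\Phi\circ A^{-1}$ modulo constants. Abstract measurability of the field with respect to the metric says nothing about how the reconstruction interacts with a non-conformal change of coordinates, since the LQG metric is covariant only under conformal maps. In fact, ruling out that a linear non-conformal $A$ is an isometry from $(\BC,D_\Phi)$ to $(\BC,D_\Psi)$ for two GFFs is essentially the special case $f=A$ of the lemma itself. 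So the missing implication is as hard as what you are proving. Even granting that the laws differ, you would still need to extract a local, additive-constant-invariant event with margins and a uniform gap.

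The paper simply writes the event down. Let $u,v$ be endpoints of the semi-major and semi-minor axes of $A(\partial\BD)$, and set $p=A^{-1}u$, $q=A^{-1}v$; these are orthogonal unit vectors.
\begin{itemize}
\item For the round configuration, the reflection exchanging the directions $p$ and $q$ makes $D_{\Phi}(p^-,p^+)$ and $D_{\Phi}(q^-,q^+)$ exchangeable. So \eqref{eq:measurability-proof-0} has probability at least $1/2$.
\item For the elliptical configuration, \Cref{lem:measurability-pattern} gives the margin version \eqref{eq:measurability-proof-1} with probability $\pp>1/2$. Its proof uses the coarse/fine decomposition of the field, the scaling exponent $\xi Q=5/6$, and $\lVert u\rVert>\lVert v\rVert$.
\end{itemize}
This supplies your $G$, with a gap of essentially $\pp-1/2$.

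\textbf{Step~2 also has a gap.} \Cref{lem:independence-across-scales} only guarantees that events whose probability exceeds a threshold $p(\lambda,\alpha,b)$ close to $1$ occur at a fraction $b$ of scales. It gives no law of large numbers for events of moderate probability such as $1/2$ or $\pp$. The subset/union-bound argument of \Cref{lem:independence-across-bands-moduli} likewise needs per-scale success probabilities close to $1$. Moreover, your configurations $\zz+\rr B^k$ with $B^k\subset B_1(0)$ give events measurable with respect to nested balls rather than disjoint annuli, so the lemma does not apply to them as stated.

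The paper fixes both problems within a single scale. It places $a_1^{-1}$ copies of the configuration in disjoint small balls around $A_{\rr/2,\rr}$. Near-independence across separated balls then gives, with probability at least $1-\zeta$, that a fraction at least $1/2-\zeta$ (resp.\ $\pp-\zeta$) of the copies satisfy \eqref{eq:measurability-proof-0} (resp.\ \eqref{eq:measurability-proof-1}). These per-scale events are annulus-measurable and have probability near $1$, so \Cref{lem:independence-across-scales} applies. The conclusion then uses only lower bounds and two pigeonhole steps: $2/3+2/3>1$ over scales, and $(1/2-\zeta)+(\pp-\zeta)>1$ over copies. This also removes your need for an upper bound on the frequency of $G^\bullet$ for $\Phi_2$, whose confinement part you would otherwise have to transfer through $f$.
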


The idea of the proof of \Cref{lem:measurability-matrix} is similar to that of the proof of \Cref{lem:matrix}. Suppose that $Df(\xx)$ exists and is close to $A$. Since $A\II \neq \II A$, the linear map $A$ sends circles to ellipses with nonzero eccentricity. Hence, at all sufficiently small scales, $f$ sends Euclidean circles centered at $\xx$ to subsets close to concentric ellipses with nonzero eccentricity. As in the proof of \Cref{lem:matrix}, we will construct ``patterns'' which rule out this possibility.

The assumptions on $A$ imply that $A(\partial\BD)$ is a centered ellipse with nonzero eccentricity. Let $u$ and $v$ be endpoints of the semi-major and semi-minor axes of this ellipse, respectively. Then $\lVert u\rVert > \lVert v\rVert$. Set $p \defeq A^{-1}(u) \in \partial\BD$ and $q \defeq A^{-1}(v) \in \partial\BD$. Note that $p$ and $q$ are necessarily orthogonal, and so are $u$ and $v$.

\begin{figure}[ht!]
    \centering
    \includegraphics[width=.6\linewidth]{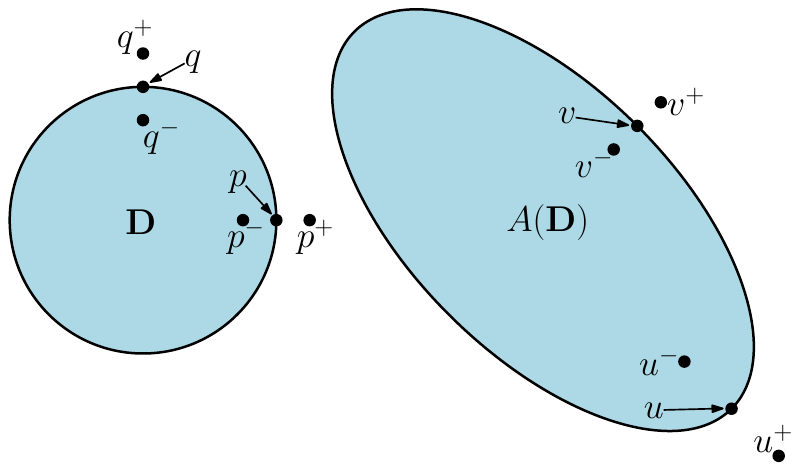}
    \caption{Illustration of the points $p$, $p^-$, $p^+$, $q$, $q^-$, $q^+$, $u$, $u^-$, $u^+$, $v$, $v^-$, and $v^+$ (with $A = \begin{pmatrix} 1 & 1/2 \\ -1 & 1/2 \end{pmatrix}$). The two blue regions depict $\BD$ and $A(\BD)$, respectively.}
    \label{fig:pquv}
\end{figure}

Fix a sufficiently small parameter $b_1 \in (0, 1)$. Set
\begin{gather*}
    p^- \defeq (1 - b_1)p; \quad p^+ \defeq (1 + b_1)p; \quad q^- \defeq (1 - b_1)q; \quad q^+ \defeq (1 + b_1)q; \\
    u^- \defeq (1 - b_1)u; \quad u^+ \defeq (1 + b_1)u; \quad v^- \defeq (1 - b_1)v; \quad v^+ \defeq (1 + b_1)v. 
\end{gather*}
See \Cref{fig:pquv} for an illustration.

\begin{lemma}\label{lem:measurability-pattern}
    We may choose $b_1$ sufficiently small so that there exists a sufficiently small parameter $b_2 \in (0, b_1)$ for which, with probability strictly greater than $1/2$,
    \begin{equation*}
        \inf\{D_{\Phi_2}(x, y) : x \in B_{b_2}(u^-), \ y \in B_{b_2}(u^+)\} > \sup\{D_{\Phi_2}(x, y) : x \in B_{b_2}(v^-), \ y \in B_{b_2}(v^+)\}.
    \end{equation*}
\end{lemma}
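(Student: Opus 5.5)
The plan is to first reduce to a comparison of point-to-point distances, and then read off a strict inequality from scaling. Write
\[
X \defeq D_{\Phi_2}(u^-, u^+), \qquad Y \defeq D_{\Phi_2}(v^-, v^+).
\]
By continuity of $D_{\Phi_2}$ with respect to the Euclidean topology, as $b_2 \to 0$ the infimum in the statement increases to $X$ and the supremum decreases to $Y$, almost surely. Hence it suffices to find $b_1$ with $\BP\lbrack X > Y\rbrack > 1/2$. We then choose $b_2$ small enough that the probability of the displayed inequality is still larger than $1/2$, using that $\{X > Y\}$ is the increasing union of the corresponding events as $b_2 \downarrow 0$. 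Note that $\lVert u^+ - u^-\rVert = 2b_1\lVert u\rVert > 2b_1\lVert v\rVert = \lVert v^+ - v^-\rVert$. Moreover, the two segments have midpoints $u$ and $v$, which lie at distance of order one from each other, while their lengths are of order $b_1$. So the task is to show that the longer segment has the larger LQG length with probability strictly above $1/2$.

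Fix a large constant $K$ and let $r_u \defeq Kb_1\lVert u\rVert$ and $r_v \defeq Kb_1\lVert v\rVert$. For $b_1$ small, the balls $B_{r_u}(u)$ and $B_{r_v}(v)$ are disjoint and far apart. Let $h_u$ and $h_v$ be the circle averages of $\Phi_2$ over $\partial B_{r_u}(u)$ and $\partial B_{r_v}(v)$. By the coordinate change and Weyl scaling axioms, applied to the affine maps sending $B_1(0)$ onto $B_{r_u}(u)$ and onto $B_{r_v}(v)$, we can write
\[
\log X - \log Y = \xi Q\log(\lVert u\rVert/\lVert v\rVert) + \xi(h_u - h_v) + \log W_u - \log W_v .
\]
Here $W_u$ is the LQG distance between the points $\pm e^{\ri\theta_u}/K$ for the field $\Phi_2(u + r_u\bullet) - h_u$, and similarly for $W_v$; the directions $\theta_u, \theta_v$ play no role by rotation invariance. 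The first term is a strictly positive deterministic constant, since $\lVert u\rVert > \lVert v\rVert$.

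Next I would use the domain Markov property on $B_{r_u}(u) \cup B_{r_v}(v)$. Write $\Phi_2$ on these balls as a zero-boundary GFF plus a harmonic extension. For $K$ large, the harmonic part on the inner ball of radius $r_u/K$ (resp.~$r_v/K$) differs from the constant $h_u$ (resp.~$h_v$) by something small with high probability. Moreover, the harmonic part is nearly constant on each ball once $b_1$ is small, because the two balls are far apart. Together with the fact that, with high probability, geodesics between the two points stay inside the ball (as in condition~\eqref{it:def-E-1}), this shows the following. Up to an error that is small in probability as $K \to \infty$ and $b_1 \to 0$, the pair $(W_u, W_v)$ is i.i.d.\ with a fixed continuous law and is independent of $(h_u, h_v)$. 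The pair $(h_u, h_v)$ is centered Gaussian, and $h_u - h_v$ is centered Gaussian with variance of order $\log(1/b_1)$, in particular non-degenerate.

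It follows that $S \defeq \xi(h_u - h_v) + \log W_u - \log W_v$ is, up to small errors, a symmetric random variable. It has a Gaussian convolution factor, so its density is positive everywhere. Therefore
\[
\BP\!\left\lbrack S > -\xi Q\log(\lVert u\rVert/\lVert v\rVert)\right\rbrack = \tfrac12 + c
\]
for some $c > 0$. The main obstacle is making $c$ survive the approximation errors, because the Gaussian variance grows as $b_1 \to 0$ and this shrinks $c$. To handle this, I would condition on $h_u - h_v$: the symmetry of $\log W_u - \log W_v$, together with its continuous law, gives a gain of $c' > 0$ that does not depend on $b_1$ when $h_u - h_v$ lies in a bounded window. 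However, the window probability itself decays like $(\log(1/b_1))^{-1/2}$. I would therefore fix $K$ first, then take $b_1$ small but fixed, so that the total variation errors from near-independence, which decay polynomially in $b_1$ by a Markov-property estimate, are smaller than $c$. If this balance is too delicate, the fallback is to keep $b_1$ moderate and instead obtain exact symmetry from rotation invariance: compare with the configuration in which $\lVert u\rVert = \lVert v\rVert$, where $\BP\lbrack X > Y\rbrack = 1/2$ exactly, and perturb monotonically in the Weyl scaling factor.
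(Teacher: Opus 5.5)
Your architecture matches the paper's: a Markov decomposition on two disjoint balls, the exact shift $\xi Q\log(\lVert u\rVert/\lVert v\rVert)=\frac56\log(\lVert u\rVert/\lVert v\rVert)>0$, a symmetric term plus an independent centered Gaussian, and continuity in $b_2$. There is a genuine gap, however, in the step you flag as the main obstacle, and with your ordering of parameters the argument fails.

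With $r_u=Kb_1\lVert u\rVert$ and $r_v=Kb_1\lVert v\rVert$, the picture around $u$ (segment, Markov ball, intermediate ball) is a rescaled copy of a fixed picture. By translation and scale invariance of the whole-plane GFF modulo constants and the coordinate change axiom, every approximation error therefore has a law depending on $K$ only. This covers both the geodesic from $u^-$ to $u^+$ leaving the region where the harmonic part is nearly constant, and the oscillation of the harmonic part there.

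There is no separate ``near-independence'' error decaying in $b_1$. The Markov property on $B_{r_u}(u)\cup B_{r_v}(v)$ makes the two zero-boundary fields and the harmonic part exactly independent, and the separation of the balls plays no role. Meanwhile the idealized gain is at most $\xi Q\log(\lVert u\rVert/\lVert v\rVert)$ times the maximal density of $\xi(h_u-h_v)+S^0$, which is $O((\log(1/(Kb_1)))^{-1/2})$.

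So fixing $K$ and then shrinking $b_1$ only kills the gain, while the error $\delta(K)>0$ stays put. The polynomial decay in $b_1$ that you invoke does not exist in your parametrization. A smaller point: $u^\pm$ lie on $\partial B_{r_u/K}(u)$, so near-constancy on $B_{r_u/K}(u)$ does not control the geodesic; you need an intermediate ball such as $B_{r_u/\sqrt K}(u)$. The fallback is not an argument either. Changing $\lVert u\rVert$ moves the segment rather than applying a Weyl factor to a fixed field, so there is no monotone coupling with the symmetric configuration.

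The missing idea is to make the Markov balls macroscopic, as the paper does. Fix $b_0$ so that $B_{b_0\lVert u\rVert}(u)$ and $B_{b_0\lVert v\rVert}(v)$ are disjoint, i.e.\ take $K=b_0/b_1$. Then the Gaussian $\xi(A_u-A_v)$ has a fixed variance $\sigma^2$, so the idealized $\log X-\log Y$ has density at most $(2\pi\sigma^2)^{-1/2}$.

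The errors then decay polynomially in $K$. Take $\tau=K^{-1/4}$. With probability $O(K^{-c})$, either the harmonic part deviates from $A_u$ by more than $\tau$ on $B_{b_0\lVert u\rVert/\sqrt K}(u)$, or the geodesic leaves that ball. A perturbation of size $O(\tau)$ flips the comparison with probability only $O(\tau)$.

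The gain decays only logarithmically. After removing the deterministic shift, the log-distances $L_1,L_2$ for the zero-boundary fields are i.i.d.\ with variance $O(\log K)$. Chebyshev and Cauchy--Schwarz then give $\BP\lbrack\lvert L_1-L_2\rvert\le 1\rbrack\ge c(\log K)^{-1/2}$, and convolving with the fixed Gaussian yields a gain of at least $c'(\log K)^{-1/2}$. Hence for $b_1$ small the gain beats the errors.

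The paper's sketch compresses this into ``asymptotically equivalent in probability'' plus the elementary observation. The point your ordering misses is this comparison of polynomially small errors against a logarithmically decaying gain, which is only available when the Markov radius is fixed.
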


\begin{proof}[Proof sketch]
    Fix $b_0 > 0$ sufficiently small so that $B_{b_0\lVert u\rVert}(u)$ and $B_{b_0\lVert v\rVert}(v)$ are disjoint. Write $A_u$ (resp.~$A_v$) for the circle average of $\Phi_2$ over $\partial B_{b_0\lVert u\rVert}(u)$ (resp.~over $\partial B_{b_0\lVert v\rVert}(v)$). Write $\kH_u$ (resp.~$\kH_v$) for the harmonic extension to $B_{b_0\lVert u\rVert}(u)$ (resp.~$B_{b_0\lVert v\rVert}(v)$) of the restriction of $\Phi_2$ to $\BC \setminus B_{b_0\lVert u\rVert}(u)$ (resp.~$\BC \setminus B_{b_0\lVert v\rVert}(v)$). By the domain Markov property of the whole-plane GFF, $\Phi_u \defeq \Phi_2 - \kH_u$ (resp.~$\Phi_v \defeq \Phi_2 - \kH_v$) is a zero-boundary GFF on $B_{b_0\lVert u\rVert}(u)$ (resp.~$B_{b_0\lVert v\rVert}(v)$). Moreover, $\Phi_u$, $\Phi_v$, and $(A_u, A_v)$ are independent.

    We observe that, as $b_1 \to 0$, the pair $(D_{\Phi_2}(u^-, u^+), D_{\Phi_2}(v^-, v^+))$ is asymptotically equivalent in probability to
    \begin{equation*}
        \left(\re^{A_u/\sqrt6} D_{\Phi_u}(u^-, u^+; B_{b_0\lVert u\rVert}(u)), \re^{A_v/\sqrt6} D_{\Phi_v}(v^-, v^+; B_{b_0\lVert v\rVert}(v))\right). 
    \end{equation*}
    Moreover, by the scale and translation invariance of the zero-boundary GFF, together with the scale covariance and translation invariance of the $\sqrt{8/3}$-LQG metric, $D_{\Phi_u}(u^-, u^+; B_{b_0\lVert u\rVert}(u))$ has the same law as $\left(\frac{\lVert u\rVert}{\lVert v\rVert}\right)^{5/6}$ times $D_{\Phi_v}(v^-, v^+; B_{b_0\lVert v\rVert}(v))$. Thus, in order to show that 
    \begin{equation*}
        \BP\lbrack D_{\Phi_2}(u^-, u^+) > D_{\Phi_2}(v^-, v^+)\rbrack > 1/2
    \end{equation*}
    for all sufficiently small $b_1$, one reduces immediately to the following elementary observation: If $X$, $Y$, and $Z$ are independent random variables, $X$ is almost surely positive, $Y$ has the same law as $aX$ for some $a \in (0,1)$, and $Z$ is centered Gaussian, then $\BP\lbrack X > \re^Z Y\rbrack > 1/2$.

    Finally, by continuity of the metric, we may choose $b_2$ sufficiently small relative to $b_1$ so that the desired assertion holds.
\end{proof}

\begin{figure}[ht!]
    \centering
    \includegraphics[width=.9\linewidth]{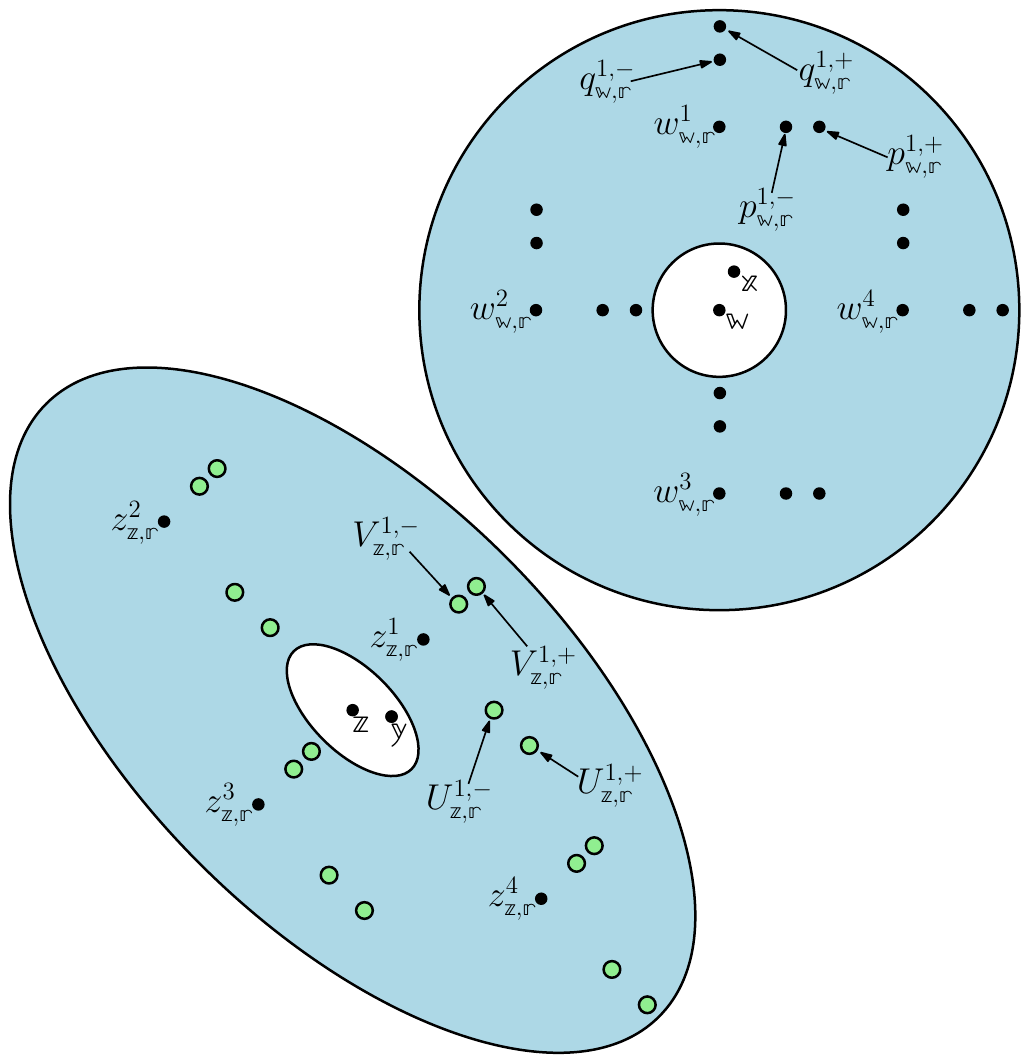}
    \caption{Illustration of various objects involved (with $A = \begin{pmatrix} 1 & 1/2 \\ -1 & 1/2 \end{pmatrix}$ and $a_1 = 1/4$). The two blue regions depict the Euclidean annulus $A_{\rr/2,\rr}(\ww)$ and the elliptical annulus $\zz + A(A_{\rr/2,\rr}(0))$, respectively. Inside $A_{\rr/2,\rr}(\ww)$, there are $a_1^{-1} = 4$ groups of ``test points'' for $\Phi_1$. There are also four groups of ``test points'' inside $\zz + A(A_{\rr/2,\rr}(0))$ for $\Phi_2$. When $\yy = f(\xx)$, $\ww$ and $\zz$ are sufficiently close to $\xx$ and $\yy$, respectively, and $f$ is sufficiently close near $\xx$ to the linear map $\bullet \mapsto A(\bullet - \xx) + \yy$, the points $f(p_{\ww,\rr}^{j,-})$, $f(p_{\ww,\rr}^{j,+})$, $f(q_{\ww,\rr}^{j,-})$, and $f(q_{\ww,\rr}^{j,+})$ must lie in the corresponding green Euclidean balls $U_{\zz,\rr}^{j,-}$, $U_{\zz,\rr}^{j,+}$, $V_{\zz,\rr}^{j,-}$, and $V_{\zz,\rr}^{j,+}$, respectively.}
    \label{fig:conformal-structure}
\end{figure}

Now, we describe how to use the ``pattern'' from \Cref{lem:measurability-pattern} to obtain a contradiction. See \Cref{fig:conformal-structure} for an illustration.

Fix sufficiently small parameters $1 \gg a_1 \gg a_2 > 0$. For $\ww \in \BC$ and $\rr > 0$, as in \Cref{subsection:pattern}, we define the collection of ``test points'' by $w_{\ww,\rr}^j \defeq \ww + (3\rr/4)\re^{2\pi\ri a_1j}$ for $j \in [1, a_1^{-1}]_\BZ$. Set
\begin{equation*}
    p_{\ww,\rr}^{j,-} \defeq w_{\ww,\rr}^j + a_2\rr p^-; \quad p_{\ww,\rr}^{j,+} \defeq w_{\ww,\rr}^j + a_2\rr p^+; \quad q_{\ww,\rr}^{j,-} \defeq w_{\ww,\rr}^j + a_2\rr q^-; \quad q_{\ww,\rr}^{j,+} \defeq w_{\ww,\rr}^j + a_2\rr q^+. 
\end{equation*}
Consider the event that 
\begin{equation}\label{eq:measurability-proof-0}
    D_{\Phi_1}(p_{\ww,\rr}^{j,-}, p_{\ww,\rr}^{j,+}) \le D_{\Phi_1}(q_{\ww,\rr}^{j,-}, q_{\ww,\rr}^{j,+}). 
\end{equation}
The additive constant of $\Phi_1$ does not affect the occurrence of this event. Moreover, by applying a reflection, we may interchange the locations of the pairs $p_{\ww,\rr}^{j,-}, p_{\ww,\rr}^{j,+}$ and $q_{\ww,\rr}^{j,-}, q_{\ww,\rr}^{j,+}$. Hence, by the reflection invariance of the whole-plane GFF, this event has probability at least $1/2$. By choosing $a_2$ sufficiently small relative to $a_1$, we may arrange that these events are sufficiently independent for distinct values of $j$. More precisely, for each $\zeta > 0$, we may choose $a_2$ sufficiently small relative to $a_1$ and design certain events $E_{\ww,\rr}^j$ as in \Cref{subsection:pattern} so that $E_{\ww,\rr}^j$ is contained in the event that $D_{\Phi_1}(p_{\ww,\rr}^{j,-}, p_{\ww,\rr}^{j,+}) \le D_{\Phi_1}(q_{\ww,\rr}^{j,-}, q_{\ww,\rr}^{j,+})$, so that $\BP\lbrack E_{\ww,\rr}^j\rbrack \ge 1/2 - \zeta$, and so that $E_{\ww,\rr}^j$ is almost surely determined by the restriction of $\Phi_1$ to a Euclidean ball centered at $w_{\ww,\rr}^j$, with these balls disjoint for distinct values of $j$. Then, by decreasing $a_1$, we may make the number of trials sufficiently large. By a similar argument to the argument applied in the proof of \Cref{lem:independence-across-balls}, we conclude that for each $\zeta > 0$, we may choose $a_1$ and $a_2$ so that, with probability at least $1 - \zeta$, the number of $j \in [1, a_1^{-1}]_\BZ$ for which $D_{\Phi_1}(p_{\ww,\rr}^{j,-}, p_{\ww,\rr}^{j,+}) \le D_{\Phi_1}(q_{\ww,\rr}^{j,-}, q_{\ww,\rr}^{j,+})$ is at least $(1/2 - \zeta)a_1^{-1}$.

For $\zz \in \BC$, consider the collection of ``test points'' $z_{\zz,\rr}^j \defeq \zz + A(w_{0,\rr}^j)$, $j \in [1, a_1^{-1}]_\BZ$, inside the elliptical annulus $\zz + A(A_{\rr/2,\rr}(0))$. Set
\begin{gather*}
    U_{\zz,\rr}^{j,-} \defeq z_{\zz,\rr}^j + B_{b_2a_2\rr}(a_2\rr u^-); \quad U_{\zz,\rr}^{j,+} \defeq z_{\zz,\rr}^j + B_{b_2a_2\rr}(a_2\rr u^+); \\
    V_{\zz,\rr}^{j,-} \defeq z_{\zz,\rr}^j + B_{b_2a_2\rr}(a_2\rr v^-); \quad V_{\zz,\rr}^{j,+} \defeq z_{\zz,\rr}^j + B_{b_2a_2\rr}(a_2\rr v^+). 
\end{gather*}
Let $\pp > 1/2$ be the probability appearing in \Cref{lem:measurability-pattern}. Then, by the same argument as in the preceding paragraph, for each $\zeta > 0$, by possibly decreasing $a_1$ and $a_2$, we may arrange that, with probability at least $1 - \zeta$, the number of $j \in [1, a_1^{-1}]_\BZ$ for which
\begin{equation}\label{eq:measurability-proof-1}
    \inf\{D_{\Phi_2}(x, y) : x \in U_{\zz,\rr}^{j,-}, \ y \in U_{\zz,\rr}^{j,+}\} > \sup\{D_{\Phi_2}(x, y) : x \in V_{\zz,\rr}^{j,-}, \ y \in V_{\zz,\rr}^{j,+}\}
\end{equation}
is at least $(\pp - \zeta)a_1^{-1}$.

Set $\yy \defeq f(\xx)$. By the same reasoning as in \Cref{lem:linearization}, there exists a sufficiently small $\delta > 0$ such that, whenever $\lVert Df(\xx) - A\rVert \le \delta$, the following holds for all sufficiently small $\rr > 0$ and all $\ww,\zz \in \BC$ with $\lVert\ww - \xx\rVert \le \delta\rr$ and $\lVert\zz - \yy\rVert \le \delta\rr$:
\begin{equation}\label{eq:measurability-proof-2}
    f(p_{\ww,\rr}^{j,-}) \in U_{\zz,\rr}^{j,-}; \quad f(p_{\ww,\rr}^{j,+}) \in U_{\zz,\rr}^{j,+}; \quad f(q_{\ww,\rr}^{j,-}) \in V_{\zz,\rr}^{j,-}; \quad f(q_{\ww,\rr}^{j,+}) \in V_{\zz,\rr}^{j,+}
\end{equation}
for all $j \in [1, a_1^{-1}]_\BZ$.

We observe that if~\eqref{eq:measurability-proof-0},~\eqref{eq:measurability-proof-1}, and~\eqref{eq:measurability-proof-2} all hold for the same values of $\rr$ and $j$, then we obtain a contradiction to the fact that $f$ is an isometry. Indeed, by choosing $\zeta$ sufficiently small and applying \Cref{lem:independence-across-scales} together with the Borel--Cantelli lemma, we may arrange that the following holds for all sufficiently small $\varepsilon \in \{2^{-n}\}_{n \in \BZ}$. For each $\zz,\ww \in B_{1/\varepsilon}(0) \cap (\varepsilon^3\BZ)^2$, there are at least $(2/3)\log_2(1/\varepsilon)$ values of $\rr \in [\varepsilon^2, \varepsilon] \cap \{2^{-n}\}_{n \in \BZ}$ for which the number of $j \in [1, a_1^{-1}]_\BZ$ satisfying~\eqref{eq:measurability-proof-0} is at least $(1/2 - \zeta)a_1^{-1}$. There are also at least $(2/3)\log_2(1/\varepsilon)$ values of $\rr \in [\varepsilon^2, \varepsilon] \cap \{2^{-n}\}_{n \in \BZ}$ for which the number of $j \in [1, a_1^{-1}]_\BZ$ satisfying~\eqref{eq:measurability-proof-1} is at least $(\pp - \zeta)a_1^{-1}$. Since $2/3 + 2/3 > 1$, there exists a value of $\rr$ for which both conclusions hold. Since $\pp > 1/2$, we may choose $\zeta$ so that $(1/2 - \zeta) + (\pp - \zeta) > 1$. Therefore, for this value of $\rr$, there exists $j \in [1, a_1^{-1}]_\BZ$ for which both~\eqref{eq:measurability-proof-0} and~\eqref{eq:measurability-proof-1} hold. Together with~\eqref{eq:measurability-proof-2}, this contradicts the fact that $f$ is an isometry.

Thus, we have proved that the conformal structure of the $\sqrt{8/3}$-LQG surface represented by $(\BC,\Phi)$, where $\Phi$ is a whole-plane GFF, is almost surely determined by its metric structure, up to complex conjugation. As in \Cref{subsection:main-proof}, we may transfer this result to the $\sqrt{8/3}$-LQG sphere using local absolute continuity.

\appendix

\section{Measurability of quasisymmetric embeddings}

In the present appendix, we prove the following measurability result concerning the existence of quasisymmetric embeddings.

\begin{lemma}\label{lem:Borel}
    Let $(\MM, \dd_{\mathrm{GH}})$ be the Gromov--Hausdorff space of nonempty compact metric spaces. Fix a distortion function $\eta$. Then the following subset of $\MM \times \MM$ is Borel:
    \begin{multline}\label{eq:Borel}
        \{((X, D_X), (Y, D_Y)) \in \MM \times \MM : X \text{ is not a singleton, and} \\
        \text{there exists an } \eta\text{-QS embedding of } (X, D_X) \text{ into } (Y, D_Y)\}. 
    \end{multline}
\end{lemma}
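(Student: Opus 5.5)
The plan is to show that the set in \eqref{eq:Borel} is closed in $\MM \times \MM$, or failing that, to write it as a countable intersection of closed sets. Either way it is Borel. The tool is a compactness argument of Arzel\`a--Ascoli type: $\eta$-quasisymmetric embeddings with a nondegeneracy normalization form an equicontinuous family, and this family is closed under limits.

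First I reduce to a normalized statement. If $X$ is not a singleton, fix $x_0, x_1 \in X$ with $D_X(x_0,x_1) = \diam(X;D_X) =: d > 0$. For an $\eta$-QS embedding $f$, set $s \defeq D_Y(f(x_0),f(x_1)) > 0$. Applying the QS inequality to the triples $(x_0,x,x_1)$ and $(x_1,x,x_0)$ gives the following:
\begin{itemize}
\item a uniform modulus of continuity $D_Y(f(x),f(y)) \le s\,\eta(2D_X(x,y)/d)$, up to a harmless case split on which of $x_0,x_1$ is farther from $x$;
\item a lower bound $D_Y(f(x),f(y)) \ge s/\eta(d/D_X(x,y))$ for distinct $x,y$, i.e.\ a uniform modulus of injectivity.
\end{itemize}
Both bounds depend only on $\eta$, $d$, and the ratio $s/\diam(Y)$, and $s \le \diam(Y;D_Y)$.

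Next I take a convergent sequence $((X_n,D_n),(Y_n,E_n)) \to ((X,D),(Y,E))$ of points of the set, with $X$ not a singleton. Then $\diam X_n \to \diam X > 0$, so $X_n$ is eventually not a singleton. The delicate point is to prevent $s_n \to 0$, i.e.\ the images degenerating. Closedness can genuinely fail here: $Y_n$ could shrink to a point while still admitting embeddings. I would handle this by stratifying. For $k\in\BN$, let $S_k$ be the set of pairs for which there is an $\eta$-QS embedding with $s \ge 1/k$ and $\diam X\ge 1/k$. I then claim that each $S_k$ is closed. Since the set in \eqref{eq:Borel} equals $\bigcup_k S_k$, it is $F_\sigma$.

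To prove $S_k$ closed, I embed everything isometrically into a common compact space $Z$ (Gromov's embedding theorem for GH-convergent sequences), so that $X_n\to X$ and $Y_n\to Y$ in the Hausdorff sense in $Z$. I pick $x^n_0,x^n_1$ with $D_n(x^n_0,x^n_1)\ge 1/k$, pass to subsequences with $x^n_i\to x_i$, and use the uniform moduli to extract, via a diagonal argument over a countable dense subset of $X$ approximated by points of $X_n$, a limit map $f\colon X\to Y$. The map $f$ inherits the modulus of continuity, and by passing to the limit in the strict inequalities (which become non-strict) it also inherits the QS inequality $D_Y(f(x),f(y))\le \eta(t)\,D_Y(f(x),f(z))$, since $\eta$ is continuous. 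Injectivity comes from the lower bound, which survives in the limit because $s \ge 1/k$. Then $f$ is a continuous injection from a compact space, hence a homeomorphism onto its image, and it is $\eta$-QS with $s\ge 1/k$. Hence the limit pair lies in $S_k$.

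I expect the main obstacle to be this limiting step. One has to define $f_n$ only on approximating points in a way that makes Arzel\`a--Ascoli applicable across varying domains. I would do this by extending the estimates to $\varepsilon_n$-nets and passing to the limit on a countable dense set. The rest is routine.
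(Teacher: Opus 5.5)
Your proposal is correct and is essentially the paper's argument. The paper also writes the set as the countable union $\bigcup_{a\in\BQ_{>0}}E_{\eta,a}$ of strata in which some pair $p,q$ satisfies $D_X(p,q)\ge a$ and $D_Y(f(p),f(q))\ge a$, and it proves each stratum closed using the same two uniform bounds, a modulus of continuity and a modulus of injectivity. The only difference is that it extracts the limit map as a Hausdorff limit of the graphs of the $f_n$ in a common compact space, rather than by your diagonal argument on a dense set. Your two displayed moduli hold only up to constant factors depending on $\eta$ (for instance, the lower bound needs a factor $1/2$, coming from choosing $x_i$ with $D_Y(f(x),f(x_i))\ge s/2$), but this does not affect the argument.
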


\begin{proof}
    Let $a > 0$. Write $E_{\eta,a} \subset \MM \times \MM$ for the subset of pairs $((X, D_X), (Y, D_Y))$ for which $X$ is not a singleton, and there exists an $\eta$-quasisymmetric embedding $f \colon (X, D_X) \to (Y, D_Y)$ and points $p,q \in X$ such that $D_X(p, q) \ge a$ and $D_Y(f(p), f(q)) \ge a$. Since the subset in~\eqref{eq:Borel} is equal to $\bigcup_{a \in \BQ_{>0}} E_{\eta,a}$, it suffices to show that $E_{\eta,a}$ is closed in $\MM \times \MM$ for each $a > 0$.

    Suppose that $(X_n, D_{X_n}) \to (X, D_X)$ and $(Y_n, D_{Y_n}) \to (Y, D_Y)$ as $n \to \infty$ with respect to the Gromov--Hausdorff distance. Suppose also that $((X_n, D_{X_n}), (Y_n, D_{Y_n})) \in E_{\eta,a}$ for every $n$. For each $n$, let $f_n \colon (X_n, D_{X_n}) \to (Y_n, D_{Y_n})$ be an $\eta$-quasisymmetric embedding, and let $p_n, q_n \in X_n$ be such that $D_{X_n}(p_n, q_n) \ge a$ and $D_{Y_n}(f_n(p_n), f_n(q_n)) \ge a$. It suffices to show that $((X, D_X), (Y, D_Y)) \in E_{\eta,a}$. 
    
    We first record several simple observations. 
    \begin{itemize}
        \item Write $D$ for the supremum of $\diam(X_n, D_{X_n})$ and $\diam(Y_n, D_{Y_n})$ over all $n$. Since the sequences $(X_n, D_{X_n})$ and $(Y_n, D_{Y_n})$ converge with respect to the Gromov--Hausdorff distance, we have $D < \infty$.
        \item Let $x_n^1, x_n^2 \in X_n$. Then there exists $r_n \in \{p_n,q_n\}$ such that $D_{X_n}(x_n^1,r_n) \ge a/2$. Since $f_n$ is $\eta$-quasisymmetric, we have
        \begin{equation}\label{eq:Borel-proof-0}
            D_{Y_n}(f_n(x_n^1), f_n(x_n^2)) \le \eta\!\left(\frac{D_{X_n}(x_n^1, x_n^2)}{D_{X_n}(x_n^1, r_n)}\right)D_{Y_n}(f_n(x_n^1), f_n(r_n)) \le \eta(2a^{-1}D_{X_n}(x_n^1, x_n^2)) D.
        \end{equation}
        \item Let $x_n^1, x_n^2 \in X_n$ be distinct. Then
        \begin{equation*}
            a \le D_{Y_n}(f_n(p_n), f_n(q_n)) \le D_{Y_n}(f_n(p_n), f_n(x_n^1)) + D_{Y_n}(f_n(x_n^1), f_n(x_n^2)) + D_{Y_n}(f_n(x_n^2), f_n(q_n)). 
        \end{equation*}
        Moreover, by the $\eta$-quasisymmetry of $f_n$,
        \begin{align*}
            D_{Y_n}(f_n(p_n), f_n(x_n^1)) &\le \eta\!\left(\frac{D_{X_n}(p_n, x_n^1)}{D_{X_n}(x_n^1, x_n^2)}\right) D_{Y_n}(f_n(x_n^1), f_n(x_n^2)) \\
            &\le \eta(D/D_{X_n}(x_n^1, x_n^2)) D_{Y_n}(f_n(x_n^1), f_n(x_n^2)).
        \end{align*}
        Similarly, $D_{Y_n}(f_n(x_n^2), f_n(q_n)) \le \eta(D/D_{X_n}(x_n^1, x_n^2)) D_{Y_n}(f_n(x_n^1), f_n(x_n^2))$. Therefore,
        \begin{equation}\label{eq:Borel-proof-1}
            D_{Y_n}(f_n(x_n^1), f_n(x_n^2)) \ge \frac a{1 + 2\eta(D/D_{X_n}(x_n^1, x_n^2))}. 
        \end{equation}
    \end{itemize}

    By the standard realization theorem for Gromov--Hausdorff convergence, there exist compact metric spaces $(\widetilde X, D_{\widetilde X})$ and $(\widetilde Y, D_{\widetilde Y})$, together with isometric embeddings of $(X_n, D_{X_n})$ and $(X, D_X)$ into $(\widetilde X, D_{\widetilde X})$, and of $(Y_n, D_{Y_n})$ and $(Y, D_Y)$ into $(\widetilde Y, D_{\widetilde Y})$, such that $X_n \to X$ and $Y_n \to Y$ as $n \to \infty$ in the Hausdorff distance. By passing to a subsequence, we may assume that the graphs $G_n \defeq \{(x, f_n(x)) \in \widetilde X \times \widetilde Y : x \in X_n\}$ converge to some compact subset $G \subset \widetilde X \times \widetilde Y$ in the Hausdorff distance, and that $p_n$ and $q_n$ converge to some points $p,q \in \widetilde X$, respectively. It is clear that $G \subset X \times Y$ and $p,q \in X$.

    We first verify that $G$ is the graph of a mapping $f \colon X \to Y$, i.e., for each $x \in X$, there exists a unique $y \in Y$ with $(x, y) \in G$.  Let $x \in X$. Since $X_n \to X$ in the Hausdorff distance, there exists a sequence $x_n \in X_n$ such that $x_n \to x$. By passing to a subsequence, we may assume that $(x_n, f_n(x_n))$ converges to some $(x,y) \in G$. This proves existence. For uniqueness, suppose by contradiction that there exist distinct points $(x,y^1),(x,y^2) \in G$. Then there exist sequences $(x_n^1, f_n(x_n^1)) \to (x,y^1)$ and $(x_n^2, f_n(x_n^2)) \to (x,y^2)$ with $x_n^1,x_n^2 \in X_n$. However, by~\eqref{eq:Borel-proof-0} and the fact that $D_{X_n}(x_n^1,x_n^2) \to 0$, we have $D_{Y_n}(f_n(x_n^1), f_n(x_n^2)) \to 0$, which contradicts $y^1 \neq y^2$.

    Next, we verify that $f$ is injective. Let $x^1, x^2 \in X$ be distinct. Then there exist sequences $(x_n^1, f_n(x_n^1)) \to (x^1, f(x^1))$ and $(x_n^2, f_n(x_n^2)) \to (x^2, f(x^2))$ with $x_n^1, x_n^2 \in X_n$. Choose $\varepsilon > 0$ such that $D_{X_n}(x_n^1, x_n^2) \ge \varepsilon$ for all sufficiently large $n$. By~\eqref{eq:Borel-proof-1}, we have $D_{Y_n}(f_n(x_n^1), f_n(x_n^2)) \ge a/(1 + 2\eta(D/\varepsilon))$ for all sufficiently large $n$. Passing to the limit gives $f(x^1) \neq f(x^2)$.

    We now verify that $f$ is $\eta$-quasisymmetric. Let $x^1,x^2,x^3 \in X$ be distinct. Then there exist sequences $(x_n^1, f_n(x_n^1)) \to (x^1, f(x^1))$, $(x_n^2, f_n(x_n^2)) \to (x^2, f(x^2))$, and $(x_n^3, f_n(x_n^3)) \to (x^3, f(x^3))$ with $x_n^1,x_n^2,x_n^3 \in X_n$. Since $f_n$ is $\eta$-quasisymmetric, we have
    \begin{equation*}
        \frac{D_{Y_n}(f_n(x_n^1), f_n(x_n^2))}{D_{Y_n}(f_n(x_n^1), f_n(x_n^3))} \le \eta\!\left(\frac{D_{X_n}(x_n^1, x_n^2)}{D_{X_n}(x_n^1, x_n^3)}\right). 
    \end{equation*}
    Passing to the limit gives
    \begin{equation*}
        \frac{D_Y(f(x^1), f(x^2))}{D_Y(f(x^1), f(x^3))} \le \eta\!\left(\frac{D_X(x^1, x^2)}{D_X(x^1, x^3)}\right).
    \end{equation*}

    Finally, since $p_n \to p$ and $q_n \to q$, we have $D_X(p, q) \ge a$. Moreover, since $G$ is the graph of $f$, every subsequential limit of $f_n(p_n)$ must be equal to $f(p)$, and similarly every subsequential limit of $f_n(q_n)$ must be equal to $f(q)$. Therefore, passing to the limit in $D_{Y_n}(f_n(p_n), f_n(q_n)) \ge a$ gives $D_Y(f(p), f(q)) \ge a$. This completes the proof that $((X, D_X), (Y, D_Y)) \in E_{\eta,a}$, hence also the proof of \Cref{lem:Borel}. 
\end{proof}

\bibliographystyle{alpha}
\bibliography{references}

\newcommand{\etalchar}[1]{$^{#1}$}
\begin{thebibliography}{DFG{\etalchar{+}}20}

\bibitem[AB60]{MR115006}
Lars Ahlfors and Lipman Bers.
\newblock Riemann's mapping theorem for variable metrics.
\newblock {\em Ann. of Math. (2)}, 72:385--404, 1960.

\bibitem[ABA17]{MR3706731}
Louigi Addario-Berry and Marie Albenque.
\newblock The scaling limit of random simple triangulations and random simple quadrangulations.
\newblock {\em Ann. Probab.}, 45(5):2767--2825, 2017.

\bibitem[ABA21]{MR4315765}
Louigi Addario-Berry and Marie Albenque.
\newblock Convergence of non-bipartite maps via symmetrization of labeled trees.
\newblock {\em Ann. H. Lebesgue}, 4:653--683, 2021.

\bibitem[Abr16]{MR3498001}
C\'eline Abraham.
\newblock Rescaled bipartite planar maps converge to the {B}rownian map.
\newblock {\em Ann. Inst. Henri Poincar\'e{} Probab. Stat.}, 52(2):575--595, 2016.

\bibitem[Ahl73]{CoInAhl}
Lars~V. Ahlfors.
\newblock {\em Conformal invariants: topics in geometric function theory}.
\newblock McGraw-Hill Series in Higher Mathematics. McGraw-Hill Book Co., New York-D\"usseldorf-Johannesburg, 1973.

\bibitem[Ahl06]{MR2241787}
Lars~V. Ahlfors.
\newblock {\em Lectures on quasiconformal mappings}, volume~38 of {\em University Lecture Series}.
\newblock American Mathematical Society, Providence, RI, second edition, 2006.
\newblock With supplemental chapters by C. J. Earle, I. Kra, M. Shishikura and J. H. Hubbard.

\bibitem[BBM21]{MR4273635}
James Belk, Collin Bleak, and Francesco Matucci.
\newblock Rational embeddings of hyperbolic groups.
\newblock {\em J. Comb. Algebra}, 5(2):123--183, 2021.

\bibitem[BC25]{2025arXiv250818792B}
Timothy {Budd} and Nicolas {Curien}.
\newblock {Random punctured hyperbolic surfaces \& the Brownian sphere}.
\newblock {\em arXiv e-prints}, page arXiv:2508.18792, August 2025.

\bibitem[BF14]{MR3445628}
Bodil Branner and N\'uria Fagella.
\newblock {\em Quasiconformal surgery in holomorphic dynamics}, volume 141 of {\em Cambridge Studies in Advanced Mathematics}.
\newblock Cambridge University Press, Cambridge, 2014.
\newblock With contributions by Xavier Buff, Shaun Bullett, Adam L. Epstein, Peter Ha\"issinsky, Christian Henriksen, Carsten L. Petersen, Kevin M. Pilgrim, Tan Lei and Michael Yampolsky.

\bibitem[BGS25]{BorgaGwynneSun2025}
Jacopo Borga, Ewain Gwynne, and Xin Sun.
\newblock Permutons, meanders, and {SLE}-decorated {Liouville} quantum gravity.
\newblock {\em J. Eur. Math. Soc.}, 2025.
\newblock Published online first.

\bibitem[BJM14]{MR3256874}
J\'er\'emie Bettinelli, Emmanuel Jacob, and Gr\'egory Miermont.
\newblock The scaling limit of uniform random plane maps, {\it via} the {A}mbj\o rn-{B}udd bijection.
\newblock {\em Electron. J. Probab.}, 19:no. 74, 16, 2014.

\bibitem[BK02]{MR1930885}
Mario Bonk and Bruce Kleiner.
\newblock Quasisymmetric parametrizations of two-dimensional metric spheres.
\newblock {\em Invent. Math.}, 150(1):127--183, 2002.

\bibitem[BK05]{MR2116315}
Mario Bonk and Bruce Kleiner.
\newblock Conformal dimension and {G}romov hyperbolic groups with 2-sphere boundary.
\newblock {\em Geom. Topol.}, 9:219--246, 2005.

\bibitem[BKM09]{MR2503988}
Mario Bonk, Bruce Kleiner, and Sergei Merenkov.
\newblock Rigidity of {S}chottky sets.
\newblock {\em Amer. J. Math.}, 131(2):409--443, 2009.

\bibitem[BM13]{MR3010807}
Mario Bonk and Sergei Merenkov.
\newblock Quasisymmetric rigidity of square {S}ierpi\'nski carpets.
\newblock {\em Ann. of Math. (2)}, 177(2):591--643, 2013.

\bibitem[BM20]{MR4140760}
Mario Bonk and Sergei Merenkov.
\newblock Square {S}ierpi\'{n}ski carpets and {L}att\`es maps.
\newblock {\em Math. Z.}, 296(1-2):695--718, 2020.

\bibitem[Bon11]{MR2854086}
Mario Bonk.
\newblock Uniformization of {S}ierpi\'nski carpets in the plane.
\newblock {\em Invent. Math.}, 186(3):559--665, 2011.

\bibitem[BP02]{MR1919393}
Marc Bourdon and Herv\'e Pajot.
\newblock Quasi-conformal geometry and hyperbolic geometry.
\newblock In {\em Rigidity in dynamics and geometry ({C}ambridge, 2000)}, pages 1--17. Springer, Berlin, 2002.

\bibitem[BS00]{MR1771428}
M.~Bonk and O.~Schramm.
\newblock Embeddings of {G}romov hyperbolic spaces.
\newblock {\em Geom. Funct. Anal.}, 10(2):266--306, 2000.

\bibitem[Can94]{MR1301392}
James~W. Cannon.
\newblock The combinatorial {R}iemann mapping theorem.
\newblock {\em Acta Math.}, 173(2):155--234, 1994.

\bibitem[CG93]{MR1230383}
Lennart Carleson and Theodore~W. Gamelin.
\newblock {\em Complex dynamics}.
\newblock Universitext: Tracts in Mathematics. Springer-Verlag, New York, 1993.

\bibitem[CLG14]{TBP}
Nicolas Curien and Jean-Fran{\c{c}}ois Le~Gall.
\newblock The {B}rownian plane.
\newblock {\em J. Theoret. Probab.}, 27(4):1249--1291, 2014.

\bibitem[CLG16]{Hull-TBP}
Nicolas Curien and Jean-Fran{\c{c}}ois Le~Gall.
\newblock The hull process of the {B}rownian plane.
\newblock {\em Probab. Theory Related Fields}, 166(1-2):187--231, 2016.

\bibitem[DDDF20]{TightLFPP}
Jian Ding, Julien Dub\'edat, Alexander Dunlap, and Hugo Falconet.
\newblock Tightness of {L}iouville first passage percolation for {$\gamma \in (0,2)$}.
\newblock {\em Publ. Math. Inst. Hautes \'Etudes Sci.}, 132:353--403, 2020.

\bibitem[DFG{\etalchar{+}}20]{WeakLQGMet}
Julien Dub\'edat, Hugo Falconet, Ewain Gwynne, Joshua Pfeffer, and Xin Sun.
\newblock Weak {LQG} metrics and {L}iouville first passage percolation.
\newblock {\em Probab. Theory Related Fields}, 178(1-2):369--436, 2020.

\bibitem[DMS21]{dms2021mating}
Bertrand Duplantier, Jason Miller, and Scott Sheffield.
\newblock Liouville quantum gravity as a mating of trees.
\newblock {\em Ast\'{e}risque}, (427):viii+257, 2021.

\bibitem[DS11]{LQGKPZ}
Bertrand Duplantier and Scott Sheffield.
\newblock Liouville quantum gravity and {KPZ}.
\newblock {\em Invent. Math.}, 185(2):333--393, 2011.

\bibitem[GL00]{MR1730906}
Frederick~P. Gardiner and Nikola Lakic.
\newblock {\em Quasiconformal {T}eichm\"uller theory}, volume~76 of {\em Mathematical Surveys and Monographs}.
\newblock American Mathematical Society, Providence, RI, 2000.

\bibitem[GM20a]{ConfLQG}
Ewain Gwynne and Jason Miller.
\newblock Confluence of geodesics in {L}iouville quantum gravity for {$\gamma \in (0, 2)$}.
\newblock {\em Ann. Probab.}, 48(4):1861--1901, 2020.

\bibitem[GM20b]{LocMetGFF}
Ewain Gwynne and Jason Miller.
\newblock Local metrics of the {G}aussian free field.
\newblock {\em Ann. Inst. Fourier (Grenoble)}, 70(5):2049--2075, 2020.

\bibitem[GM21]{ExUniLQG}
Ewain Gwynne and Jason Miller.
\newblock Existence and uniqueness of the {L}iouville quantum gravity metric for {$\gamma\in(0,2)$}.
\newblock {\em Invent. Math.}, 223(1):213--333, 2021.

\bibitem[GMS20]{TuttePVBDtoLQG}
Ewain Gwynne, Jason Miller, and Scott Sheffield.
\newblock The {T}utte embedding of the {P}oisson-{V}oronoi tessellation of the {B}rownian disk converges to {$\sqrt{8/3}$}-{L}iouville quantum gravity.
\newblock {\em Comm. Math. Phys.}, 374(2):735--784, 2020.

\bibitem[GP91]{MR1168043}
M.~Gromov and P.~Pansu.
\newblock Rigidity of lattices: an introduction.
\newblock In {\em Geometric topology: recent developments ({M}ontecatini {T}erme, 1990)}, volume 1504 of {\em Lecture Notes in Math.}, pages 39--137. Springer, Berlin, 1991.

\bibitem[Gro87]{MR919829}
M.~Gromov.
\newblock Hyperbolic groups.
\newblock In {\em Essays in group theory}, volume~8 of {\em Math. Sci. Res. Inst. Publ.}, pages 75--263. Springer, New York, 1987.

\bibitem[Hei01]{MR1800917}
Juha Heinonen.
\newblock {\em Lectures on analysis on metric spaces}.
\newblock Universitext. Springer-Verlag, New York, 2001.

\bibitem[HK95]{MR1323982}
Juha Heinonen and Pekka Koskela.
\newblock Definitions of quasiconformality.
\newblock {\em Invent. Math.}, 120(1):61--79, 1995.

\bibitem[HK98]{MR1654771}
Juha Heinonen and Pekka Koskela.
\newblock Quasiconformal maps in metric spaces with controlled geometry.
\newblock {\em Acta Math.}, 181(1):1--61, 1998.

\bibitem[IT92]{MR1215481}
Y.~Imayoshi and M.~Taniguchi.
\newblock {\em An introduction to {T}eichm\"uller spaces}.
\newblock Springer-Verlag, Tokyo, 1992.
\newblock Translated and revised from the Japanese by the authors.

\bibitem[KK00a]{MR1771571}
Sari Kallunki and Pekka Koskela.
\newblock Exceptional sets for the definition of quasiconformality.
\newblock {\em Amer. J. Math.}, 122(4):735--743, 2000.

\bibitem[KK00b]{MR1834498}
Michael Kapovich and Bruce Kleiner.
\newblock Hyperbolic groups with low-dimensional boundary.
\newblock {\em Ann. Sci. \'Ecole Norm. Sup. (4)}, 33(5):647--669, 2000.

\bibitem[LG07]{MR2336042}
Jean-Fran{\c{c}}ois Le~Gall.
\newblock The topological structure of scaling limits of large planar maps.
\newblock {\em Invent. Math.}, 169(3):621--670, 2007.

\bibitem[LG10]{GeoBMap}
Jean-Fran{\c{c}}ois Le~Gall.
\newblock Geodesics in large planar maps and in the {B}rownian map.
\newblock {\em Acta Math.}, 205(2):287--360, 2010.

\bibitem[LG13]{MR3112934}
Jean-Fran{\c{c}}ois Le~Gall.
\newblock Uniqueness and universality of the {B}rownian map.
\newblock {\em Ann. Probab.}, 41(4):2880--2960, 2013.

\bibitem[LG22]{MR4474537}
Jean-Fran{\c{c}}ois Le~Gall.
\newblock The volume measure of the {B}rownian sphere is a {H}ausdorff measure.
\newblock {\em Electron. J. Probab.}, 27:Paper No. 113, 28, 2022.

\bibitem[LGP08]{MR2438999}
Jean-Fran{\c{c}}ois Le~Gall and Fr\'ed\'eric Paulin.
\newblock Scaling limits of bipartite planar maps are homeomorphic to the 2-sphere.
\newblock {\em Geom. Funct. Anal.}, 18(3):893--918, 2008.

\bibitem[LGR21]{MR4341082}
Jean-Fran{\c c}ois Le~Gall and Armand Riera.
\newblock Spine representations for non-compact models of random geometry.
\newblock {\em Probab. Theory Related Fields}, 181(1-3):571--645, 2021.

\bibitem[LV73]{MR344463}
O.~Lehto and K.~I. Virtanen.
\newblock {\em Quasiconformal mappings in the plane}, volume Band 126 of {\em Die Grundlehren der mathematischen Wissenschaften}.
\newblock Springer-Verlag, New York-Heidelberg, second edition, 1973.
\newblock Translated from the German by K. W. Lucas.

\bibitem[LW20]{MR4073230}
Alexander Lytchak and Stefan Wenger.
\newblock Canonical parameterizations of metric disks.
\newblock {\em Duke Math. J.}, 169(4):761--797, 2020.

\bibitem[McM99]{MR1726737}
Curtis~T. McMullen.
\newblock Hausdorff dimension and conformal dynamics. {I}. {S}trong convergence of {K}leinian groups.
\newblock {\em J. Differential Geom.}, 51(3):471--515, 1999.

\bibitem[Mie08]{MR2399286}
Gr\'egory Miermont.
\newblock On the sphericity of scaling limits of random planar quadrangulations.
\newblock {\em Electron. Commun. Probab.}, 13:248--257, 2008.

\bibitem[Mie13]{MR3070569}
Gr\'egory Miermont.
\newblock The {B}rownian map is the scaling limit of uniform random plane quadrangulations.
\newblock {\em Acta Math.}, 210(2):319--401, 2013.

\bibitem[Mos68]{MR236383}
G.~D. Mostow.
\newblock Quasi-conformal mappings in {$n$}-space and the rigidity of hyperbolic space forms.
\newblock {\em Inst. Hautes \'Etudes Sci. Publ. Math.}, (34):53--104, 1968.

\bibitem[MS16]{MR3572845}
Jason Miller and Scott Sheffield.
\newblock Quantum {L}oewner evolution.
\newblock {\em Duke Math. J.}, 165(17):3241--3378, 2016.

\bibitem[MS17]{IG4}
Jason Miller and Scott Sheffield.
\newblock Imaginary geometry {IV}: interior rays, whole-plane reversibility, and space-filling trees.
\newblock {\em Probab. Theory Related Fields}, 169(3-4):729--869, 2017.

\bibitem[MS20]{MR4050102}
Jason Miller and Scott Sheffield.
\newblock Liouville quantum gravity and the {B}rownian map {I}: the {${\rm QLE}(8/3,0)$} metric.
\newblock {\em Invent. Math.}, 219(1):75--152, 2020.

\bibitem[MS21a]{MR4225028}
Jason Miller and Scott Sheffield.
\newblock An axiomatic characterization of the {B}rownian map.
\newblock {\em J. \'Ec. polytech. Math.}, 8:609--731, 2021.

\bibitem[MS21b]{MR4348679}
Jason Miller and Scott Sheffield.
\newblock Liouville quantum gravity and the {B}rownian map {II}: {G}eodesics and continuity of the embedding.
\newblock {\em Ann. Probab.}, 49(6):2732--2829, 2021.

\bibitem[MS21c]{MR4242633}
Jason Miller and Scott Sheffield.
\newblock Liouville quantum gravity and the {B}rownian map {III}: the conformal structure is determined.
\newblock {\em Probab. Theory Related Fields}, 179(3-4):1183--1211, 2021.

\bibitem[MT26]{2026arXiv260324473M}
Jason {Miller} and Yi~{Tian}.
\newblock {The conformal dimension of the Brownian sphere is two}.
\newblock {\em arXiv e-prints}, page arXiv:2603.24473, March 2026.

\bibitem[MW25]{MR4956983}
Damaris Meier and Stefan Wenger.
\newblock Quasiconformal almost parametrizations of metric surfaces.
\newblock {\em J. Eur. Math. Soc. (JEMS)}, 27(12):5133--5154, 2025.

\bibitem[NR23]{MR4608329}
Dimitrios Ntalampekos and Matthew Romney.
\newblock Polyhedral approximation of metric surfaces and applications to uniformization.
\newblock {\em Duke Math. J.}, 172(9):1673--1734, 2023.

\bibitem[Pan89]{MR1024425}
Pierre Pansu.
\newblock Dimension conforme et sph\`ere \`a{} l'infini des vari\'et\'es \`a{} courbure n\'egative.
\newblock {\em Ann. Acad. Sci. Fenn. Ser. A I Math.}, 14(2):177--212, 1989.

\bibitem[Pol81]{Polyakov:1981rd}
Alexander~M. Polyakov.
\newblock {Quantum Geometry of Bosonic Strings}.
\newblock {\em Phys. Lett. B}, 103:207--210, 1981.

\bibitem[Raj17]{MR3608292}
Kai Rajala.
\newblock Uniformization of two-dimensional metric surfaces.
\newblock {\em Invent. Math.}, 207(3):1301--1375, 2017.

\bibitem[Sul85a]{MR819553}
Dennis Sullivan.
\newblock Quasiconformal homeomorphisms and dynamics. {I}. {S}olution of the {F}atou-{J}ulia problem on wandering domains.
\newblock {\em Ann. of Math. (2)}, 122(3):401--418, 1985.

\bibitem[Sul85b]{MR806415}
Dennis Sullivan.
\newblock Quasiconformal homeomorphisms and dynamics. {II}. {S}tructural stability implies hyperbolicity for {K}leinian groups.
\newblock {\em Acta Math.}, 155(3-4):243--260, 1985.

\bibitem[Tro21]{MR4242630}
Sascha Troscheit.
\newblock On quasisymmetric embeddings of the {B}rownian map and continuum trees.
\newblock {\em Probab. Theory Related Fields}, 179(3-4):1023--1046, 2021.

\bibitem[TV80]{MR595180}
P.~Tukia and J.~V\"ais\"al\"a.
\newblock Quasisymmetric embeddings of metric spaces.
\newblock {\em Ann. Acad. Sci. Fenn. Ser. A I Math.}, 5(1):97--114, 1980.

\bibitem[V{\"a}i71]{MR454009}
Jussi V{\"a}is{\"a}l{\"a}.
\newblock {\em Lectures on {$n$}-dimensional quasiconformal mappings}, volume Vol. 229 of {\em Lecture Notes in Mathematics}.
\newblock Springer-Verlag, Berlin-New York, 1971.

\end{thebibliography}

\end{document}